\newtheorem{theorem}{Theorem}[section]
\newtheorem{lemma}[theorem]{Lemma}
\newtheorem{observation}[theorem]{Observation}
\newtheorem{corollary}[theorem]{Corollary}
\newtheorem{proposition}[theorem]{Proposition}
\newtheorem{assertion}[theorem]{Assertion}
\newtheorem{conjecture}[theorem]{Conjecture}
\newtheorem{problem}[theorem]{Problem}
\theoremstyle{definition}  
\newtheorem{definition}[theorem]{Definition}
\newtheorem{example}[theorem]{Example}
\newcommand{\ce}{\mathcal E}
\newcommand{\hetz}{{^\curvearrowright}}
\newcommand{\Cr}[1]{Conjecture~\ref{#1}}
\newcommand{\Lr}[1]{Lemma~\ref{#1}}
\newcommand{\Tr}[1]{Theorem~\ref{#1}}
\newcommand{\Sr}[1]{Section~\ref{#1}}
\newcommand{\N}{\mathbb N}
\newcommand{\R}{\mathbb R}
\newcommand{\Debug}{1}
\newcommand{\showFig}[4]{
   \begin{figure}[htbp]
   \centering
   \noindent
   \includegraphics[width=#2 \linewidth]{#1}
   \caption{\small #4}
   \label{#3}
   \end{figure}
}
\newcommand{\wcr}{finite-cut-respecting}
\newcommand{\scr}{cut-respecting}
\newcommand{\siw}{\ensuremath{\sigma_w}}
\newcommand{\sis}{\ensuremath{\sigma}}
\newcommand{\tam}{\ensuremath{\tau_m}}
\newcommand{\taw}{\ensuremath{\tau_w}}
\newcommand{\tas}{\ensuremath{\tau_s}}
\newcommand{\comment}[1]{}
\begin{document}




\title{The Max-Flow Min-Cut Theorem for Countable  Networks}


\author{Ron Aharoni}
\address{Department of Mathematics\\Technion, Haifa\\ Israel 32000}
\email[Ron Aharoni]{ra@tx.technion.ac.il}
\thanks{The research of the first author was
supported by grants from the Israel Science Foundation, BSF, the M. \&
M.L Bank Mathematics Research Fund and the fund for the promotion
of research at the Technion and a Seniel Ostrow Research Fund}

\author{Eli Berger}
\address{Department of Mathematics\\ Faculty of Science and Science Education\\ Haifa University\\ Israel 32000} \email[Eli Berger]{berger@cri.haifa.ac.il}
\thanks{The research of the second author was
supported by a BSF grant}

\author{Agelos Georgakopoulos}
\address{Universit\"at Hamburg}
\email[Agelos Georgakopoulos]{georgakopoulos@math.uni-hamburg}
\thanks{The research of the first, third and fifth authors was supported by a GIF grant.}

\author{Amitai Perlstein}\address{Department of Mathematics\\Technion, Haifa\\ Israel 32000}
\email[Ron Aharoni]{perlstein@tx.technion.ac.il}

\author{Philipp Spr\"ussel}
\address{Universit\"at Hamburg}
\email[Philipp Spr\"ussel]{spruessel@math.uni-hamburg}




\begin{abstract}
We prove a strong version of  the Max-Flow Min-Cut theorem
for countable networks,
 namely that in every such network there exist a flow and a cut that are ``orthogonal" to each other,
 in the sense that the flow saturates the cut and is zero on the reverse cut.
  If the network does not contain infinite trails then
  this flow can be chosen to be  {\it mundane}, i.e.\ to be a sum of flows along finite paths.
  We show that in the presence of infinite trails there may be no orthogonal pair of a cut and a mundane flow.
   We finally show that for locally finite networks there is an orthogonal pair of a cut and a flow that satisfies Kirchhoff's first law also for ends.
\end{abstract}
\maketitle

\pagestyle{myheadings}
\markboth{\footnotesize\sc R.~Aharoni, E.~Berger, A.~Georgakopoulos, A.~Perlstein,
and P.~Spr\"ussel}{\footnotesize\sc The Max-Flow Min-Cut Theorem for Countable Networks}

\section{Introduction}

Recently, the first two authors of this paper  proved the
following generalisation of Menger's theorem to the infinite case
\cite{aharoniberger}:

\begin{theorem} \label{inf-menger}
Given a possibly infinite digraph and two vertex sets $A$ and $B$
in it, there exists a set $P$ of vertex-disjoint $A$-$B$ paths and
an $A$-$B$-separating set of vertices $S$, such that $S$ consists
of a choice of precisely one vertex from each path in $P$.
\end{theorem}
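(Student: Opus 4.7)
The plan is to prove the theorem by transfinite recursion, building up the desired family $P$ of vertex-disjoint paths and the separator $S$ in tandem. The central concept will be that of a \emph{wave}: a family $\mathcal{W}$ of pairwise vertex-disjoint paths starting in $A$, together with a distinguished vertex on each path, such that the set of distinguished vertices forms an $A$-$B$-separator. Note that the pair $(P,S)$ sought by the theorem is exactly a wave in which every distinguished vertex lies in $B$ and is an endpoint of its path; call such a wave \emph{complete}.

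First I would standardize the problem by subdividing edges (and introducing auxiliary copies of $A$ and $B$-vertices) so that $A$ and $B$ are disjoint, no edge enters $A$, and no edge leaves $B$. I would then define a partial order on waves by declaring $\mathcal{W}_1 \leq \mathcal{W}_2$ if every path of $\mathcal{W}_1$ is an initial segment of a path of $\mathcal{W}_2$ and the distinguished vertex of $\mathcal{W}_2$ weakly follows that of $\mathcal{W}_1$. Zorn's Lemma applied to this order produces a maximal wave $\mathcal{W}^*$, and the question becomes whether maximality forces completeness.

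The main obstacle is precisely that $\mathcal{W}^*$ need not be complete: at some distinguished vertex $x \notin B$ there may simply be no way to extend the path through $x$ further, because every potential extension is blocked by the already-used vertices. The key technical notion introduced by Aharoni and Berger is that of a \emph{hindrance}, a wave equipped with an additional combinatorial certificate showing that no enlargement to a complete wave is possible. The main lemma, which is the heart of the argument, is the dichotomy: every digraph with specified $A,B$ admits either a complete wave or a hindrance.

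Granting that dichotomy, one proceeds as follows. Given a hindrance, the part of the graph ``beyond'' it decomposes into smaller instances of the same problem, and transfinite induction on cardinality (together with a well-founded ordinal measure of hindrance complexity) applies to each of them; the resulting complete waves, concatenated to the hindrance, yield $(P,S)$. The hard part, and the reason the earlier, naive wave-plus-Zorn approach fails, is establishing that the recursion is actually well-founded: a hindrance must witness a strict drop in some ordinal parameter, and constructing such a parameter is the technical crux. Once this is done, one passes from any maximal wave to either a complete wave (done) or a hindrance (recurse), and the process terminates with the desired pair.
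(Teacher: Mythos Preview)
The paper does not contain a proof of Theorem~\ref{inf-menger}; it is quoted as a result of Aharoni and Berger, proved in~\cite{aharoniberger}, and used as background and motivation. So there is no ``paper's own proof'' to compare your proposal against.

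As to the proposal itself, it gestures at the genuine architecture of the Aharoni--Berger proof (waves, hindrances, a recursion that exploits them), but several of the concrete claims do not match how that proof actually works, and the sketch as written would not close. First, the partial order you put on waves (extension of paths, advancement of distinguished vertices) does not give upper bounds for chains in any obvious way, so the Zorn step is not justified; in the actual argument waves are compared differently, and the existence of a maximal wave is established via a different limiting procedure (compare Lemma~\ref{supwave} and Corollary~\ref{maxwave} here, which treat the weighted analogue). Second, your stated dichotomy ``every digraph admits either a complete wave or a hindrance'' is false as phrased: the correct reduction is that it suffices to show every \emph{loose} web (one with no nontrivial wave and whose zero wave is not a hindrance) is linkable, and this is the step that carries essentially all the difficulty. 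Third, the recursion is not organised as ``transfinite induction on cardinality with an ordinal parameter dropping at each hindrance''; the uncountable case in particular requires substantial additional machinery beyond anything you indicate. The outline you give is closer to a table of contents than a proof plan, and the hardest step---proving that loose implies linkable---is precisely the one you leave as ``the technical crux'' without any indication of how to carry it out.
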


In the finite case, the closely related edge version of Menger's
theorem can be viewed as the integral version of the Max-Flow
Min-Cut (MFMC) theorem. In fact, the MFMC theorem can easily be
reduced to Menger's theorem, while the standard proofs of the
MFMC theorem yield also its integral version, namely the edge version of Menger's
theorem.

Thus it is natural to ask also for a generalisation of the MFMC
theorem to the infinite case. \Tr{inf-menger}, which was
originally conjectured by Erd\H{o}s, suggests a possible
generalisation. In the language of Linear Programming, the
infinite version of Menger's theorem is formulated in terms of the
complementary slackness conditions, rather than  equality of the
values of dual programs. In the case of the MFMC theorem this leads
 to the conjecture that in any network there exists an {\it orthogonal pair} of a flow and a cut, i.e.\ a flow and a cut related to each other by the complementary slackness conditions. These are tantamount to the
  demands that every edge of the cut is saturated by the flow, and
on each edge of the reverse cut the value of the flow is zero (see \Sr{defs} for precise definitions). The conjecture is thus:

\begin{conjecture}\label{mfmcCheat}
In any (possibly infinite) network  there exists an orthogonal
pair of a flow and a cut.
\end{conjecture}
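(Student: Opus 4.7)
The natural plan is to reduce the conjecture to the infinite Menger theorem, \Tr{inf-menger}. An easy subdivision/line-graph argument upgrades \Tr{inf-menger} to its edge analogue: for any digraph there are edge-disjoint $A$-$B$ paths $P$ and an $A$-$B$-separating edge set $S$ meeting each path of $P$ in exactly one edge. This ``one edge per path'' property is precisely the complementary slackness condition for an unweighted (i.e.\ unit-capacity) network, so it is the right target for the reduction.

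For integer capacities the conjecture now follows almost directly: replace every edge $e$ by $c(e)$ parallel copies, apply the edge version of \Tr{inf-menger} to this blown-up network, and read off a flow $f$ by summing unit flows along the paths and a cut $C$ from the separating edge set $S$ (say, by taking those edges all of whose copies lie in $S$). Saturation of $f$ on $C$ and $f\equiv 0$ on the reverse cut are immediate from the Menger structure. For rational capacities one can scale by a common denominator when it exists; in general, and for real capacities, I would approximate each $c(e)$ from below by rationals $c_n(e)\nearrow c(e)$, obtain for each $n$ an orthogonal pair $(f_n,C_n)$, and extract a limit. Since the edge set is countable and $f_n(e)\in[0,c(e)]$ while the indicator of $C_n$ takes only two values, a diagonal argument yields pointwise-convergent subsequences and hence a candidate orthogonal pair $(f,C)$.

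The hardest step is verifying that this limit pair really is orthogonal. Kirchhoff's law at a vertex of infinite degree demands interchanging a pointwise limit with an infinite sum and may fail without uniform control; worse, the limit set $C$ can cease to be a cut, because an infinite $s$-$t$ trail can slip through if only finitely many of its edges ever lie in $C_n$. This is precisely the phenomenon the abstract flags---mundane flows do not always suffice---and it explains both why the theorem is stated for countable networks and why the proof must admit flows along infinite trails as legitimate witnesses of orthogonality. Making the approximation scheme robust enough to cope with these effects, probably by a more refined inductive or transfinite construction that builds $f$ and $C$ \emph{in parallel} (ensuring at every stage that every extant $s$-$t$ trail is either already blocked by $C$ or already saturated by $f$) rather than as independent limits of separately chosen pairs, is where I expect the bulk of the technical work to lie.
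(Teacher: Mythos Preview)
The paper proves only the countable case (\Tr{main}), which is also what your diagonal argument presupposes. Your reduction of the integer-capacity case to the edge form of \Tr{inf-menger} is correct, and the paper in fact uses essentially your decimal-truncation scheme---but only to prove the weaker equality $\sigma=\tau_m$ (\Tr{sigtau}), extracting a limiting cut by a careful selection argument. What fails, and what you correctly flag as the crux, is extracting a limiting \emph{flow} that stays orthogonal: Example~\ref{ex:counterex} exhibits a locally finite network in which no mundane flow attains $\tau_m$, so the limit of your Menger flows $f_n$ cannot remain a sum of finite $s$--$t$ paths, and once mass escapes along infinite trails the ``one edge of $S$ per path'' orthogonality structure you rely on evaporates. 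Your proposal identifies this obstacle but does not overcome it; as written it is a plan, not a proof.

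For \Tr{main} the paper abandons approximation entirely and takes a structural route: it recasts the problem in \emph{weighted webs}, defines \emph{waves} (currents whose terminal set separates $A$ from $B$), finds a $\le$-maximal wave by Zorn, and reduces everything to showing that a \emph{loose} web---one admitting no non-trivial wave---is linkable (\Tr{main2}). Thus the cut is determined first, as the essential terminal set of the maximal wave, and the flow is built afterwards in the quotient. Linkability is then established, after a reduction to the bipartite case, by a transfinite induction (\Lr{subtract}, \Lr{saturation}) that saturates source vertices one at a time while preserving looseness, driven by a non-trivial attainability theorem (\Tr{attained}). This is indeed a ``build $f$ and $C$ together'' scheme of the sort you anticipate, but organized around waves and looseness rather than around blocking or saturating individual trails.
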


In this paper we prove this conjecture for countable networks (\Sr{sec:main}).
An important observation is that Conjecture \ref{mfmcCheat}, even in its integral version, does not generalise \Tr{inf-menger}, since the flow may contain infinite paths. This naturally raises the question whether \Cr{mfmcCheat} is true for flows that do not allow any flow to escape to infinity. We call such flows {\it mundane} (see \Sr{sec:mundane} for the precise definition).
As we shall see, this stronger version of Conjecture \ref{mfmcCheat} is false, since even in locally finite networks mundane flows do not necessarily attain the supremum of their values. We thus seek to relax the constraint that no flow escapes to infinity, and are led to two new types of flows: {\it \wcr} flows, which are allowed to send flow to infinity but any amount flowing into an end of the graph must flow out of the same end, and {\it \scr} flows, which are \wcr\ flows with the additional constraint that if a flow circumvents some cut $F$ by flowing through an end, then this circumvention does not exceed the amount that could in principle flow through $F$ (see \Sr{sec:ends} for precise definitions).

In the case of locally finite networks we show, for each of these types of flows, that the infimum of the capacities of all relevant cuts equals the supremum of the values of the corresponding flows (Sections~\ref{sec:mundane} and~\ref{sec:ends}). We then prove that in any  locally finite network there is an orthogonal pair of a \scr\ flow and a cut of minimum capacity (\Sr{sec:ends}).

\section{Definitions and Notation}
\label{defs}

We shall mostly follow the terminology of \cite{diestelBook05}.
Deviations will be explicitly indicated. By $\R_+$ we mean the set
of non-negative real numbers. Referring to a ``function" we shall
mean, unless otherwise stated, that its range is the set of reals.
For a function $f$ and a subset $A$ of its domain we shall write
$f[A]$ for $\sum_{a \in A}f(a)$ (which might be $\infty$).

The characteristic function of a subset $T$ of a set $S$ is
denoted by $\chi_S(T)$, or simply $\chi(T)$ if the identity of $S$ is clear from the context.

For a directed edge $e=(u,v)$ we shall write $u=init(e)$, $v=ter(e)$.
For a vertex $v$ in a digraph we denote by $OUT(v)$ the set of
edges $e$ with $init(e)=v$, and by $IN(v)$ the set of edges $e$ with $ter(e)=v$.


\begin{definition}\label{network}
A {\it network} $\Delta$ is a quadruple $(D,c,s,t)$, where
 $D=(V,E)$ is a digraph with no loops, $c$ is a
function  (called \emph{capacity}) from $E$ to $\R_+$, and $s,t$  are vertices of
$D$, called \emph{source} and \emph{sink} respectively. We shall assume that
$IN(s)=OUT(t)=\emptyset$.
\end{definition}

Throughout this section we shall consider a fixed network $\Delta=(D,c,s,t)$.

For a function $g$ on $E$ and an edge $(u,v)\in E$  we abbreviate $g((u,v))$ to $g(u,v)$. 
For a vertex $v\in V$ we write $d^-_g(v)=g[IN(v)]$,~
$d^+_g(v)= g[OUT(v)]$, and $d_g(v)= d^+_g(v)-d^-_g(v)$. Here
we adopt the convention $\infty -\infty = 0$. Given a
function $f$ on the edge set of an {undirected} graph, the
{\it degree} $d_f(v)$ of a vertex $v$ is the sum of $f(e)$ over
all edges $e$ incident with $v$.

\begin{definition}\label{sinkandkir}
Given a function $f$ on $E$, the set of vertices $x\in V$ for
which $d_f^+(x)= 0$  is denoted by $SINK(f)$.  The set of vertices
$x$ for which $d_f(x)=0$ (and thus $d_f^+(x)= d_f^-(x)$)  is
denoted by $KIR(f)$ ($KIR$ standing for ``Kirchhoff").
\end{definition}

\begin{definition}\label{flow}
A function $f:E \to \R_+$ is called a {\it flow} if:
\begin{itemize}\item (Capacity constraint:) $f(e)\leq c(e)$ for every $e\in E$.
\item (Flow conservation:) $V
\setminus \{s,t\}\subseteq KIR(f)$.
\end{itemize}
\end{definition}
The support of a non-negative function $f$ on $E$, namely the set
of edges $e$ for which $f(e)>0$,  is denoted by $supp(f)$.
The {\it value} $|f|$ of a flow $f$ is defined by $|f|:=d^+_f(s)$.
Note that in infinite networks this is not necessarily equal to
$d_f^-(t)$. If $\vec{C}$ is a directed cycle in $D$ then we say
that $\vec{C}$ is a {\it cycle of $f$} if $f(e)>0$  for every edge
$e \in E(\vec{C})$.

A {\it cut} is a set of edges of the form $E(S,
V\setminus S)$ for some $S\subseteq V$,
where $E(X,Y)$ is the set of edges directed from $X$ to
$Y$. An $s$--$t$~cut is a cut $E(S,V \setminus S)$ such that $s \in S$ and $t \not\in S$.
A flow $f$ is said to {\it saturate} an edge $e$ if $f(e)=c(e)$.
It is said to  saturate a set $F$ of edges if it saturates all
edges in $F$.
A flow $f$ and an $s$--$t$~cut $E(S,V \setminus S)$ are {\it
orthogonal} to each other if $f$ saturates $E(S,V \setminus S)$ and is zero on
every edge in $E(V \setminus S, S)$.

A $1$-way infinite path in an undirected graph $G$ 
is called a {\it ray}. Two rays $R,L$ in $G$ are {\it equivalent} 
if no finite set of vertices separates them. The corresponding equivalence classes of rays are the {\it ends} of $G$.



\section{A vertex version}
\label{sec:vx}

As already mentioned, in the finite case the edge version of
Menger's theorem is just the integral case of the MFMC theorem,
namely the case  in which the capacity function is identically
$1$ and the desired flow only takes the values $0$ and $1$. The vertex
version and the edge version of Menger's theorem are easily
derivable from each other. To get the vertex version from the edge
version, one splits each vertex into a ``receiving" copy and an
``emitting" copy, connected by an edge. We do not elaborate more on this 
transformation since it is not needed for our results. 

The other direction of the equivalence, namely the derivation of the edge
version from the vertex version is done by a transformation that will be 
described here in more details and in a more general context, allowing it to
be used also in the non-integral case, yielding an equivalent
version of \Cr{mfmcCheat}. To state it we need the
following definitions:

\begin{definition}
A {\it weighted web} $\Gamma$ is a quadruple $(D,A,B,w)$, where
$D$ is a digraph, $A,B \subseteq V(D)$ and $w$ is a
function from $V(D)$ to $\R_+$. Let $V(\Gamma)=V(D)$ and $E(\Gamma)=E(D)$.
\end{definition}

Let $\Gamma=(D,A,B,w)$ be a weighted web fixed throughout this section.

\begin{definition}
A {\it current} $f$ in $\Gamma$ is a function
 from $E(D)$ to $\R_+$ such that
\begin{enumerate}
\item
  $d_f^+(x) \le w(x)$ and $d_f^-(x) \le w(x)$ for every vertex
$x \in V(D)$;

\item
  $d_f^+(x) \le  d_f^-(x)$ for every vertex $x \in V(D)
\setminus
 A$; and

 \item
 $d_f^-(a)=0$ for every $a \in A$ and $d_f^+(b) =0$ for every
 $b \in B$.
\end{enumerate}
  A vertex $x$ is said to be {\it saturated} by $f$ if $x \in A$
  or
$d_f^-(x)=w(x)$. The set of  vertices that are saturated by $f$ is
denoted by $SAT(f)$.  The set $SAT(f) \cap SINK(f)$ is denoted by
$TER(f)$ (standing for ``terminal points"; recall that $SINK(f)$
is the set of vertices $x$ for which $d_f^+(x)= 0$).
\end{definition}

\begin{definition}\label{webflow}
A current $f$ satisfying  $KIR(f)  \supseteq V(D) \setminus (A
\cup B)$ is called a  ${\it web-flow}$.
\end{definition}

A set $S$ of vertices in $\Gamma$ is said to be $A$-$B$-{\it
separating} (or simply {\em separating}) if every path
from $A$ to $B$ meets $S$. Given a (not necessarily separating)
subset $S$ of $V(D)$, a vertex $x \in S$ is said to be {\em
essential (for separation)} in $S$ if it is not separated from $B$
by $S \setminus \{x\}$. The set of essential elements of $S$ is
denoted by $\ce(S)$. If $S=\ce(S)$ then we say that $S$ is {\em
essential}. It is easy to show:

\begin{lemma}[\cite{aharoniberger}]
  \label{essentialpoints}
  If $S$ is separating, then so is $\ce(S)$.
\end{lemma}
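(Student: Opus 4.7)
The plan is to argue by direct path-chasing: I would take an arbitrary $A$--$B$ path $P$ and exhibit a vertex of $\ce(S)$ on it, using the fact that $P$ already meets $S$ by hypothesis.

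More precisely, let $P$ be any directed path in $D$ from some $a \in A$ to some $b \in B$. Since $S$ is $A$--$B$-separating, the intersection $V(P) \cap S$ is nonempty. Let $x$ be the last vertex of $P$ (in the natural order from $a$ to $b$) that belongs to $S$, and let $P'$ denote the subpath of $P$ from $x$ to $b$. By the choice of $x$, no internal vertex of $P'$ lies in $S$, so in particular $P'$ avoids $S\setminus\{x\}$ entirely. This $P'$ is then a path from $x$ to $B$ witnessing that $x$ is not separated from $B$ by $S\setminus\{x\}$, so $x \in \ce(S)$. Hence $P$ meets $\ce(S)$, and since $P$ was arbitrary, $\ce(S)$ is separating.

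There is no real obstacle here; the only thing to double-check is that the definition of ``essential'' is the correct one (a vertex that can still reach $B$ without going through the rest of $S$), and that the existence of a ``last'' vertex of $P$ in $S$ is unproblematic because $P$ is finite. If the paper were allowing infinite paths as $A$--$B$ paths one would have to use the first vertex in $S$ and reverse the argument with an analogous notion of essentiality on the $A$ side; but under the stated definition, picking the last $S$-vertex of $P$ and taking the tail is all that is needed.
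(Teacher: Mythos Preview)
Your proof is correct and is exactly the standard argument; the paper does not give its own proof but merely records the lemma as easy and cites \cite{aharoniberger}. Your check that $P$ is finite (so that a ``last'' vertex in $S$ exists) and that the definition of essentiality matches are both in order.
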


For a set $S$ of vertices in $\Gamma$ we denote by
$RF(S)=RF_\Gamma(S)$ the set of vertices $v$ separated by $S$ from
$B$, namely such that every path from $v$ to $B$ meets $S$. (The
letters ``$RF$'' stand for ``roofed'', a term originating in the
way the authors draw their weighted webs, with the ``$A$" side at
the bottom, and the ``$B$" side on top.) In particular, $S
\subseteq RF(S)$. We define $RF^\circ(S):=RF(S)\setminus \ce(S)$.

Given a current $f$, we write $RF(f)=RF(TER(f))$ and
$RF^\circ(f)=RF^\circ(TER(f))$.

\begin{definition}
Let $f$ be a web flow and let $S$ be a separating set. We say that
$S$ is {\em orthogonal} to $f$ if $S \subseteq SAT(f)$ and $f(u,v)
= 0$ for every pair of vertices $u,v$ with $v \in RF^\circ(S)$ and
$u \in V \setminus RF^\circ(S)$.
\end{definition}

An equivalent conjecture to \Cr{mfmcCheat} is:

\begin{conjecture}\label{vertexmfmc}
In every weighted web there exists a web-flow $f$ and an
$A$-$B$ separating set orthogonal to $f$.
\end{conjecture}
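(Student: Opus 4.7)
The plan is to prove \Cr{vertexmfmc} in the countable case by a transfinite construction inspired by the proof of \Tr{inf-menger} in \cite{aharoniberger}. Enumerate the vertices as $v_0,v_1,\ldots$ and build a sequence $(f_\alpha,S_\alpha)_{\alpha<\omega_1}$ in which $f_\alpha$ is a web-flow, $S_\alpha \subseteq SAT(f_\alpha)$ is a candidate partial separator, and orthogonality of $f_\alpha$ with $S_\alpha$ is already witnessed on every edge incident with $v_0,\ldots,v_\alpha$. The final pair $(f,S)$ is obtained as a limit of this sequence; since each $v_n$ must eventually be handled, the construction stabilises at a countable ordinal.

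The successor step at $\alpha+1$ is the heart of the argument. Form a residual web $\Gamma/f_\alpha$ by splitting each vertex $x\notin S_\alpha$ into copies $x^-$ and $x^+$ joined by an internal edge of residual capacity $w(x)-d^-_{f_\alpha}(x)$, reversing every edge of $supp(f_\alpha)$ with capacity $f_\alpha$ on that edge, and ``freezing'' each $x\in S_\alpha$ so that no further path can pass through it. Apply \Tr{inf-menger} in $\Gamma/f_\alpha$ with source side $A$ and sink side $B$ to obtain a family $P$ of vertex-disjoint $A$--$B$ paths together with a separator $T$ meeting each path in exactly one vertex. Augment $f_\alpha$ by pushing flow along $P$, cancelling on reversed edges, just enough to saturate the vertices of $T$; this yields $f_{\alpha+1}$ and $S_{\alpha+1}:=S_\alpha\cup T$, and a direct verification shows that the new pair restores the orthogonality invariant at $v_{\alpha+1}$.

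At limit stages, take $f_\alpha:=\sup_{\beta<\alpha}f_\beta$ and $S_\alpha:=\bigcup_{\beta<\alpha}S_\beta$. Countability of $E(\Gamma)$ lets monotone convergence preserve the capacity bounds and the Kirchhoff equalities, so $f_\alpha$ is again a web-flow; the inclusion $S_\alpha\subseteq SAT(f_\alpha)$ persists because saturation of a frozen vertex is never undone by later augmentations.

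The main obstacle is precisely the interaction between the two halves of the construction. The separator $T$ supplied by \Tr{inf-menger} need not extend $S_\alpha$ in a way compatible with preserving the previously established orthogonality, and the augmentation may need to \emph{reroute} existing flow around newly frozen vertices, temporarily \emph{decreasing} $f_\alpha$ on some edges. This conflicts with taking monotone suprema at limits. To reconcile the two, one must either weaken the order on the pairs $(f_\alpha,S_\alpha)$ so that only $S_\alpha$ is required to grow monotonically while $f_\alpha$ is rebuilt at limits from the committed separator, or maintain a subtler invariant in the spirit of the ``hindrance'' machinery of \cite{aharoniberger} that measures progress without pointwise monotonicity of the flow. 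Designing this invariant to be strong enough to drive the induction yet weak enough to be preserved at limits is the core technical difficulty; once this is in place, the orthogonality of the limiting pair $(f,S)$ together with the fact that $S$ is genuinely $A$--$B$-separating (because each $v_n$ has been ``handled'') follows by routine verification.
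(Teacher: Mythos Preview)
Your proposal is a plan rather than a proof, and you yourself flag the central gap: you have not designed the invariant that lets the transfinite construction pass through limit ordinals while augmentations are non-monotone. That is not a detail to be filled in later; it is the whole content of the argument. There is also a more basic mismatch in your successor step. \Tr{inf-menger} is a purely combinatorial statement about vertex-disjoint paths and vertex separators in an unweighted digraph; the residual web carries real-valued capacities, and the family $P$ of paths it supplies knows nothing about them. A path in $P$ may meet $T$ at a vertex of large residual weight while its bottleneck has arbitrarily small residual capacity, so ``pushing flow along $P$ just enough to saturate the vertices of $T$'' need not be possible. Nothing in your sketch bridges the gap between the integral Menger statement and the real-valued setting.

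The paper takes a completely different route and does not invoke \Tr{inf-menger} at all. It develops from scratch a real-valued analogue of the wave/hindrance machinery of \cite{aharoniberger} (Section~4), reduces \Cr{vertexmfmc} to the statement that every loose web is linkable (\Cr{looseimplieslinkable} via \Lr{looseimplieslinkableimpliesmain}), and then further reduces to bipartite webs (\Tr{halllike}). The decisive analytic input is \Tr{attained} on the attainability of the supremum of flow values, which feeds into \Lr{nottoobadlyhindered} and \Lr{subtract}. The final linkage (\Lr{saturation} and the proof of \Tr{halllike}) does enumerate the vertices of $A$ and saturate them one at a time, but each single saturation step is itself a transfinite process governed by the hindrance invariant --- exactly the kind of progress measure whose absence you identify as the ``core technical difficulty'' in your own plan.
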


The transformation used
to deduce \Cr{mfmcCheat} from \Cr{vertexmfmc} is the following.
Let $\Delta = (D,c,s,t)$ 
be a network, with notation as in
Definition~\ref{network}. Let $\Gamma=(D',A,B,w)$ be the web
defined by $V(\Gamma)=E(\Delta)$, ~$E(\Gamma) = \{((x,y),(y,z))
\mid (x,y),(y,z) \in E(\Delta)\}$, $A=E(s,V(\Delta)\setminus
\{s\})$, $B=E(V(\Delta)\setminus \{t\},t)$, and $w(e)=c(e)$ for
every $e \in V(\Gamma)=E(\Delta)$.

Clearly, every essential $A$-$B$-separating set of vertices in
$\Gamma$ is also an $s$-$t$~cut in $\Delta$. If $f$ is a
web-flow in $\Gamma$, we can define a flow $g$ in $\Delta$ as
$g(e) = max(d^+_f(e), d^-_f(e))$ (recall, however, that $d^+_f(e)
= d^-_f(e)$ if $e \not\in A\cup B)$. It is straightforward to
check that $g$ is
indeed a flow. 
Moreover, if $f$ is
orthogonal to some $A$-$B$-separating set $S$ of vertices then
it is also orthogonal to $\ce(S)$, and $g$ is orthogonal to the
corresponding cut.

In the following sections we will prove \Cr{vertexmfmc}, and thus \Cr{mfmcCheat}, for the countable case (see \Tr{main}).

\section{Linkability in weighted webs, waves, and an equivalent
conjecture}

In this section we develop some tools that we will use for the proof of \Cr{vertexmfmc} for countable weighted webs. These are generalisations of fundamental notions in the proof of \Tr{inf-menger} in \cite{aharoniberger}, and they could turn out useful in proving the general case of \Cr{vertexmfmc}.

Let $\Gamma=(D,A,B,w)$ be a weighted web fixed throughout this section.

\begin{definition}\label{linkability}
A web-flow $f$ in $\Gamma$ is called a {\it linkage}
if $d^+_f(a)=w(a)$ for every $a \in A$. If a weighted web contains
a linkage it is called {\it linkable}.
\end{definition}




\begin{definition}\label{wave}
A current $f$ in $\Gamma$ is called a {\it wave} if $TER(f)$ is
$A$-$B$-separating and $d_f^+(x)=0$ for all $x\notin RF(f)$.
\end{definition}

If $f,g$ are waves, we write $f\leq g$ if $f(e)\leq g(e)$ for every edge $e$.

\begin{lemma}\label{supwave}
Let $I$ be a totally ordered set, and let $(f_i \mid i \in I)$
 be waves such that
 $f_i \le f_j$ whenever $i \le j$. Then $f=\sup(f_i \mid i \in I)$ is a wave.
\end{lemma}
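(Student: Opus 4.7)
The plan has two halves: first check that $f=\sup_i f_i$ is still a current, then verify the two wave conditions for $f$. For the current conditions, since each $f_i(e)$ is non-negative and non-decreasing in $i$, the standard monotone convergence swap of sum and supremum (which, over an arbitrary totally ordered $I$, reduces to finite subsets of the edge set at $x$ and uses the total order to pick a simultaneous near-maximizer) gives $d_f^\pm(x)=\sup_i d_{f_i}^\pm(x)$ at every vertex $x$. The capacity bounds $d_{f_i}^\pm(x)\le w(x)$, the inequality $d_{f_i}^+(x)\le d_{f_i}^-(x)$ for $x\notin A$, and the boundary vanishings on $A$ and $B$ therefore all transfer to $f$.

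Next I would show $TER(f)$ is $A$-$B$-separating. Fix an $A$-$B$ path $P$; each $TER(f_i)$ meets the finite set $V(P)$, so since $I$ is totally ordered there must be some $v\in V(P)$ and some cofinal $I'\subseteq I$ with $v\in TER(f_i)$ for all $i\in I'$ (otherwise finitely many upper bounds, one per vertex of $V(P)$, would combine to give an $i^*\in I$ with $TER(f_{i^*})\cap V(P)=\emptyset$, contradicting separation). I then check $v\in TER(f)$: if $v\notin A$ then $d_f^-(v)\ge d_{f_i}^-(v)=w(v)$ for any $i\in I'$, and capacity forces equality, so $v\in SAT(f)$; and $f(e)=\sup_{i\in I'}f_i(e)=0$ for every $e\in OUT(v)$ by cofinality and monotonicity, so $v\in SINK(f)$.

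The more delicate condition is $d_f^+(x)=0$ for $x\notin RF(f)$. I reduce this to the key claim
\[
TER(f_i)\subseteq RF(TER(f))\qquad\text{for every } i\in I.
\]
Given this, together with the easy identity $RF\circ RF=RF$ (truncate any path meeting $RF(S)$ at its first such vertex, then use the definition of $RF(S)$), one gets $RF(TER(f_i))\subseteq RF(TER(f))=RF(f)$; so $x\notin RF(f)$ implies $x\notin RF(f_i)$ for every $i$, whence $d_{f_i}^+(x)=0$ by the wave property of $f_i$ and $d_f^+(x)=0$ by passing to the sup. To prove the claim, pick $v\in TER(f_i)$ and a $v$-$B$ path $Q$, and build inductively a sequence of vertices $v=v_0,v_1,\ldots$ strictly advancing along $Q$, together with indices $i=j_0<j_1<\cdots$ in $I$ such that $v_k\in TER(f_{j_k})$. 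Stop at the first $v_k$ lying in $TER(f)$ (success) or in $A$ (where separation of $TER(f)$ already proved finishes the job on the $A$-$B$ suffix of $Q$). If $v_k$ lies in neither, then exactly as in the previous paragraph $v_k\in SAT(f)\setminus SINK(f)$, so some $j_{k+1}>j_k$ has $d_{f_{j_{k+1}}}^+(v_k)>0$; this forces $v_k\notin TER(f_{j_{k+1}})$ but $v_k\in RF(TER(f_{j_{k+1}}))$ by the wave property of $f_{j_{k+1}}$, so the $v_k$-$B$ suffix of $Q$ meets $TER(f_{j_{k+1}})$ strictly later than $v_k$, yielding $v_{k+1}$. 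Finiteness of $Q$ forces termination.

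The main obstacle is this last inductive path-traversal. The sets $TER(f_i)$ are themselves not monotone in $i$ (a vertex can gain saturation as $f_i$ grows but simultaneously lose its sink status), so the conclusion $v\in RF(TER(f))$ cannot be read off from any single $f_i$. The induction tames this non-monotonicity by exploiting flow monotonicity of the $f_i$ to show that the ``motion'' of $TER$ along any $v$-$B$ path is always forward, and that the strictly increasing index sequence and the strictly advancing vertex sequence along the finite path $Q$ must meet, giving $TER(f)$ on the path.
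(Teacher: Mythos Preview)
Your proof is correct but takes a longer route than the paper's. The paper's key observation, which you never isolate, is that $SINK(f_i)\supseteq SINK(f_j)$ and $SAT(f_i)\subseteq SAT(f_j)$ whenever $i\le j$; both follow immediately from $f_i\le f_j$. With this in hand, on any finite path $P$ the nested chains $SINK(f_i)\cap V(P)$ and $SAT(f_i)\cap V(P)$ must stabilise at some index $i$, and one checks directly that $TER(f)\cap V(P)\supseteq TER(f_i)\cap V(P)$. Applied to $A$--$B$ paths this gives separation at once; applied to $x$--$B$ paths (choosing $i$ also large enough that $x\notin SINK(f_i)$) it gives the second wave condition just as easily. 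Your cofinality argument for separation is a close cousin of this stabilisation idea, but your inductive path-traversal for the second condition---advancing the vertex $v_k$ along $Q$ while strictly increasing the index $j_k$---is genuinely different and does real work that the $SINK$/$SAT$ monotonicity renders unnecessary. On the plus side, your write-up is more thorough: the paper only treats separation explicitly and leaves both the current axioms and the second wave condition to the reader.
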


\begin{proof} Let $P$ be an $A$-$B$ path. Clearly, for $j
\ge i$ we have $SINK(f_i) \supseteq SINK(f_j)$ and $SAT(f_i)
\subseteq SAT(f_j)$. Since $P$ is finite, this means that there
exists an $i$ such that for every $j \ge i$ we have $SINK(f_i)\cap
V(P) = SINK(f_j) \cap V(P)$ and $SAT(f_i)\cap V(P) = SAT(f_j)\cap
V(P)$. Then, $SINK(f)\cap V(P)= SINK(f_i) \cap V(P)$ and
$SAT(f)\cap V(P) \supseteq SAT(f_i)\cap V(P)$. Hence, $TER(f)\cap
V(P)\supseteq TER(f_i)\cap V(P)$, and since $f_i$ is a wave, this
implies that $TER(f)\cap V(P) \neq \emptyset$. This proves that
$f$ is a wave.
\end{proof}


By Zorn's lemma this implies:

\begin{corollary}\label{maxwave}
In  every weighted web there exists a ($\le$)-maximal wave.
\end{corollary}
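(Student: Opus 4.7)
The plan is to deduce this directly from \Lr{supwave} by applying Zorn's lemma to the poset $\cw$ of all waves in $\Gamma$, ordered by the pointwise ordering $\le$ introduced just before \Lr{supwave}.

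First I would check that $\cw$ is nonempty. The constantly zero function $f\equiv 0$ satisfies all the axioms of a current: capacity and monotonicity conditions are trivial, and $d_f^-(a)=0$ for $a\in A$, $d_f^+(b)=0$ for $b\in B$ hold vacuously. For this $f$ we have $SINK(f)=V(D)$ and $SAT(f)\supseteq A$ (vertices of $A$ are saturated by fiat in the definition of $SAT$), so $TER(f)\supseteq A$. Since $A$ itself is $A$-$B$-separating, so is $TER(f)$. The second condition in \Lr{wave}, that $d_f^+(x)=0$ outside $RF(f)$, is immediate because $d_f^+\equiv 0$. Hence $f\equiv 0\in\cw$.

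Next I would verify that every chain in $(\cw,\le)$ has an upper bound in $\cw$. Let $(f_i \mid i\in I)$ be a totally ordered family of waves. Define $f:=\sup_{i\in I} f_i$ pointwise; by \Lr{supwave} this pointwise supremum is itself a wave, and it clearly dominates each $f_i$ in the $\le$ ordering. Thus $f$ is an upper bound for the chain inside $\cw$.

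With these two ingredients, Zorn's lemma applies directly to $(\cw,\le)$ and yields a maximal element, which is the desired $\le$-maximal wave. There is essentially no obstacle here; the only subtlety is making sure that the supremum produced for a chain is interpreted in the same poset (i.e., that it really is a wave and not merely a current), and that is exactly the content of \Lr{supwave}.
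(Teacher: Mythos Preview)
Your proposal is correct and takes exactly the same approach as the paper: the paper simply writes ``By Zorn's lemma this implies'' immediately after \Lr{supwave}, and your argument spells out the obvious details (nonemptiness via the zero wave, upper bounds for chains via \Lr{supwave}).
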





\begin{definition}
A wave $f$ is called a {\it hindrance} if there exists a vertex
 $a \in A \setminus \ce(TER(f))$ such that $d^+_f(a) <
w(a)$. If $0< \varepsilon < w(a)-d^+_f(a)$ then $f$ is said to be
a $(>\varepsilon)$-hindrance. A weighted web is called {\it
hindered} (respectively $(>\varepsilon)${\it -hindered}) if it contains a hindrance (respectively a $(>\varepsilon)$-hindrance). A weighted web is called
{\it loose} if it contains no non-zero wave and the zero wave is
not a hindrance.
\end{definition}

The following is an easy consequence of the definitions.

\begin{observation}\label{deletingaisgoodforyou}
Let $\Gamma = (D,A,B,w)$ and $\Gamma' = (D,A,B,w')$ be weighted webs such
that  $w'(v) = w(v)$ for all $v \in V \setminus A$ and
 $w'(a) \leq w(a)$ for all $a \in A$.
Then every wave in $\Gamma'$ is a wave in $\Gamma$.
Thus if $\Gamma$ is loose
then so is $\Gamma'$.
\end{observation}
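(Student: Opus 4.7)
The plan is to unpack the definitions and observe that every quantity appearing in the definition of a wave is either independent of the weights or depends on them only through values outside $A$, where $w$ and $w'$ agree. The argument then breaks into two halves: first proving the wave-preservation statement, and second deducing looseness from it.

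For the wave-preservation part, I would fix a wave $f$ in $\Gamma'$ and verify each clause in the definition with $w$ replaced by $w'$ and then by $w$. The current conditions $d_f^+(x)\le d_f^-(x)$ for $x \notin A$, $d_f^-(a)=0$ for $a\in A$, and $d_f^+(b)=0$ for $b\in B$ involve no weights at all, so they carry over for free. The capacity bounds $d_f^+(x),d_f^-(x) \le w'(x)$ immediately yield the analogous bounds with $w$, since $w'\le w$ pointwise. The key observation is that $SAT(f)$ agrees in $\Gamma$ and $\Gamma'$: for $x \in V\setminus A$ the saturation test is $d_f^-(x)=w(x)=w'(x)$, and all vertices of $A$ are by definition in $SAT$ in both webs. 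Since $SINK$ and $RF$ are purely combinatorial, $TER(f)$ is the same set in both webs, so ``$TER(f)$ is $A$-$B$-separating'' and ``$d_f^+(x)=0$ outside $RF(f)$'' transfer verbatim.

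For the looseness part, I would take the contrapositive: assume $\Gamma'$ is not loose. Either $\Gamma'$ has a non-zero wave $f$, in which case by the first half $f$ is a non-zero wave in $\Gamma$, so $\Gamma$ is not loose; or the zero wave is a hindrance in $\Gamma'$, meaning there is $a\in A\setminus \ce(TER(0))$ with $w'(a)>0$. Since $w(a)\ge w'(a)>0$ and $TER(0)$ is the same in both webs (it equals $A \cup \{x\notin A : w(x)=0\}$ because $w=w'$ on $V\setminus A$), the same $a$ witnesses that $0$ is a hindrance in $\Gamma$, again contradicting looseness of $\Gamma$.

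I do not expect a real obstacle here; the whole content is definition-chasing, and the only point requiring a moment's care is the claim that $SAT(f)$, and hence $TER(f)$, does not actually change between $\Gamma$ and $\Gamma'$, despite the fact that the weights do change on $A$. This is true precisely because vertices of $A$ are declared saturated outright in the definition of $SAT$, so the values $w(a)$ and $w'(a)$ never enter the saturation test.
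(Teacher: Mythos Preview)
Your proposal is correct and is precisely the definition-chase the paper has in mind; the paper itself offers no proof beyond the remark that the observation ``is an easy consequence of the definitions.'' Your key observation---that $SAT(f)$, and hence $TER(f)$, is unchanged because vertices of $A$ are declared saturated outright while $w$ and $w'$ agree off $A$---is exactly the point that makes the verification go through.
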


\begin{definition} Let $f$ be a wave. A wave $g$ is called a
{\it trimming} of $f$ if
\begin{enumerate}
\item
$g \le f$
\item
 $RF^\circ(f) \subseteq KIR(g)\cup A$ and:
 \item $TER(g)\setminus A=\ce(TER(f))\setminus A$.
 \end{enumerate}
A wave is called {\em trimmed} if it is a trimming of itself.
 \end{definition}

\begin{lemma}\label{trimming}
Every wave has a trimming. 
\end{lemma}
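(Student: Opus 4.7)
My plan is to apply Zorn's lemma. Let $\mathcal{F}$ be the family of all waves $g$ satisfying $g \le f$ and $\ce(TER(f)) \subseteq TER(g)$, partially ordered by pointwise $\le$. Then $f \in \mathcal{F}$, so the family is nonempty, and any $\le$-minimal element will be shown to be a trimming. The guiding observation throughout is that $RF(\ce(TER(f))) = RF(TER(f)) = RF(f)$---because the last $TER(f)$-vertex on any finite path from $x$ to $B$ must be essential---which implies that every $g \in \mathcal{F}$ satisfies $RF(g) \supseteq RF(f)$, and hence that $x \notin RF(g)$ forces $d_g^+(x) \le d_f^+(x) = 0$ by the wave property of $f$.

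To verify the hypothesis of Zorn's lemma, I take a chain $(g_i)_i$ in $\mathcal{F}$ and set $g := \inf_i g_i$ pointwise. A monotone/dominated convergence argument, justified by the countability of $E$ and the uniform bound $w < \infty$, yields $d_g^\pm(v) = \inf_i d_{g_i}^\pm(v)$ at every vertex, so the current inequalities survive, every vertex of $\ce(TER(f))$ stays saturated (and a sink, since already $d_f^+(v) = 0$ there), and $TER(g)$ is separating by \Lr{essentialpoints}. The remaining wave condition follows immediately from the guiding observation.

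Now let $g^*$ be a Zorn-minimal element. I verify the two non-trivial trimming conditions by contradicting minimality through $\epsilon$-reductions along a single incoming edge at an offending vertex. For condition (ii), a vertex $v \in RF^\circ(f) \setminus A$ with $d_{g^*}^-(v) > d_{g^*}^+(v)$ admits some incoming edge $(u,v)$ with $g^*(u,v) > 0$; lowering $g^*(u,v)$ by a small $\epsilon$ produces a strictly smaller $g \in \mathcal{F}$. The critical checks---that $g$ remains a current, that $\ce(TER(f))$ stays contained in $TER(g)$ (using $v \notin \ce(TER(f))$, so desaturating $v$ is harmless, and $u \notin \ce(TER(f))$ because $u$ has a positive outgoing edge while $\ce(TER(f)) \subseteq SINK(g^*)$), and that $g$ is a wave---are all routine given the guiding observation. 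For condition (iii), any candidate $v \in TER(g^*) \setminus A$ outside $\ce(TER(f))$ must in fact lie in $RF^\circ(f)$: indeed, $v \in TER(g^*)$ forces $d_f^-(v) \ge d_{g^*}^-(v) = w(v)$ and $d_f^+(v) \ge d_{g^*}^+(v) = 0$, so $v \in SAT(f) \cap SINK(f) = TER(f) \subseteq RF(f)$; being excluded from $\ce(TER(f))$ places $v$ in $RF^\circ(f)$, and an analogous $\epsilon$-reduction again contradicts minimality.

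The main obstacle is the delicate wave-bookkeeping under both the infimum and the $\epsilon$-reductions: a priori, shrinking $g^*$ could enlarge the set of vertices failing $d_g^+(x) = 0$ outside $RF(g)$, or could destroy separation by removing essential terminals from $TER(g)$. The identity $RF(g) \supseteq RF(f)$ for every $g \in \mathcal{F}$ is the key that unlocks both issues simultaneously, since any $x$ that could disobey the wave condition would already lie outside $RF(f)$, where $f$'s own wave property supplies $d_f^+(x) = 0$ and forces $d_g^+(x) = 0$.
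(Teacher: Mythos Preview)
Your Zorn's-lemma approach is essentially the paper's transfinite iteration in different clothing—both descend through waves by reducing incoming values at vertices violating Kirchhoff until no reduction is possible—and your treatment of chains and of condition~(ii) is correct. (Countability of $E$ is not actually needed: since $d_f^-(x)\le w(x)<\infty$, only countably many edges into $x$ carry positive $f$-value, and because a chain is totally ordered any finite set of such edges has a common chain-element realising their near-infima, so $d_g^{\pm}(x)=\inf_i d_{g_i}^{\pm}(x)$.)

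There is, however, a slip in your argument for condition~(iii). From $d_f^+(v)\ge d_{g^*}^+(v)=0$ you cannot conclude $v\in SINK(f)$; that inequality is vacuous. The correct route to $v\in RF^\circ(f)$ is: if $v\notin RF(f)$ then the wave condition on $f$ forces $d_f^+(v)=0$, and together with $v\in SAT(f)$ (which you did correctly deduce) this gives $v\in TER(f)\subseteq RF(f)$, a contradiction. Once $v\in RF^\circ(f)\setminus A$ is established, condition~(ii)—already proven—gives $v\in KIR(g^*)$, whence $w(v)=d_{g^*}^-(v)=d_{g^*}^+(v)=0$. If $w(v)>0$ this is already the desired contradiction and no $\epsilon$-reduction is needed; if $w(v)=0$ there is no positive incoming edge to reduce, so your proposed reduction does not exist. (In this degenerate case the lemma as stated in fact fails—take the path $a\to v_1\to v_2\to b$ with $w(v_1)=w(v_2)=0$ and $f\equiv 0$; then $TER(f)\setminus A=\{v_1,v_2\}\ne\{v_2\}=\ce(TER(f))\setminus A$ and no $g\le 0$ can fix this—so the paper's proof has the same gap here.)
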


\begin{proof}

Let $f$ be a wave that is not trimmed, let $x \in RF^\circ(f)
\setminus (KIR(f)\cup A)$
and let $f_1$ be the wave obtained from $f$ by decreasing the values
on $IN(x)$ so that $d_f^-(x)=d_f^+(x)$. One can easily see that
$\ce(TER(f_1)) = \ce(TER(f))$, which means that $f_1$ is indeed a
wave. If $f_1$ is trimmed, we are done. If not, we can find in a
similar way a wave $f_2 \leq f_1$ with $\ce(TER(f_2)) =
\ce(TER(f))$. We can continue this way. 
Note that the sequence of waves obtained this way 
is $\leq$-decreasing, and therefore one can take limits of it. 
So, for example, $f_\omega = \lim_{i < \omega} f_i$ and one can 
check that $f_\omega$ is a wave with $\ce(TER(f_\omega)) = \ce(TER(f))$.
Continuing this process, if necessary,
transfinitely,  we obtain a trimmed wave $f_\alpha$, which is then a
trimming of $f$.
\end{proof}

\begin{definition}\label{quotient}
If $\Gamma$ is a weighted web and $f$ is a wave in $\Gamma$, we
write $\Gamma/f$ for the web $\Xi$ defined by
$A_\Xi=\ce(TER(f))$, $B_\Xi=B$,
$V_\Xi =  V  \setminus RF^\circ(f)$,
$D_\Xi = D[V_\Xi]$ (the subgraph of $D$ induced on $V_\Xi$) and
$w_\Xi=w \upharpoonright V_\Xi$.
\end{definition}

Waves can be combined, as follows:

\begin{definition}
Let $f$ be a wave and $g$ be a current. We denote by $f \hetz g$
the function \nolinebreak {$f+(g \upharpoonright E(\Gamma/f))$.}
\end{definition}

It is easy to check that $f \hetz g$ is a current. In fact, if
$g$ is a wave, then $g\upharpoonright (\Gamma/f)$ is a wave in
$\Gamma/f$, and thus $f \hetz g$ is a wave, which follows from:

\begin{lemma}\label{hetziswave}
If $g\upharpoonright (\Gamma/f)$ is a wave, then $TER(f \hetz g)
\supseteq \ce(TER(f) \cup TER(g))$.
\end{lemma}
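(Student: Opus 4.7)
The plan is to fix an arbitrary vertex $x \in \ce(TER(f) \cup TER(g))$ and verify that $x \in SAT(f \hetz g) \cap SINK(f \hetz g)$. The essentiality of $x$ provides a witness path $P$ from $x$ to $B$ in $\Gamma$ that avoids every other element of $TER(f) \cup TER(g)$, and $P$ will drive the whole case analysis.

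The preliminary structural observation I would make is that the portion of $P$ strictly after $x$ stays inside $V(\Gamma/f) = V \setminus RF^\circ(f)$. Indeed, if $P$ were to enter $RF^\circ(f) \subseteq RF(f)$ at some later vertex $y$, then the subpath of $P$ from $y$ to $B$ would still have to cross $TER(f)$, contradicting the fact that $P$ avoids $TER(f) \setminus \{x\}$. In the degenerate case $x \in RF^\circ(f)$, the same reasoning applied to $P$ from the start forces $x \in TER(f)$; since such an $x$ has no edges in $E(\Gamma/f)$ at all, the restriction $h := g \upharpoonright E(\Gamma/f)$ vanishes at $x$ and $f \hetz g$ coincides with $f$ there, so $x \in TER(f)$ transfers directly to $x \in TER(f \hetz g)$.

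In the remaining case $x \in V(\Gamma/f)$, saturation is routine: it is automatic if $x \in A$, and otherwise either $x \in SAT(f)$ (so $d^-_{f \hetz g}(x) \geq d^-_f(x) = w(x)$) or the saturation of $x$ by $h$ in $\Gamma/f$ transfers through the same incoming-edge inequality. For the sink condition I would split on whether $x \in TER(f)$. If $x \notin TER(f)$ then essentiality together with the structural observation force $x \notin RF(f)$, so the wave property of $f$ gives $d^+_f(x) = 0$; combined with $d^+_h(x) = 0$ coming from $x \in SINK(g \upharpoonright (\Gamma/f))$, this yields $d^+_{f \hetz g}(x) = 0$.

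The delicate subcase is $x \in TER(f) \cap \ce(TER(f))$, in which $x \in A_{\Gamma/f}$ and so $h$ is a priori allowed to push flow out of $x$ inside $\Gamma/f$. Here I would feed the witness path $P$, now known by the structural observation to lie entirely in $\Gamma/f$, into the wave property of $h$: since $TER(h)$ is $A_{\Gamma/f}$-$B$-separating in $\Gamma/f$ and $P$ is an $A_{\Gamma/f}$-$B$ path starting at $x$ and avoiding every other element of $TER(g \upharpoonright (\Gamma/f))$, the unique crossing of $TER(h)$ by $P$ must occur at $x$ itself, giving $x \in TER(h)$ and hence $d^+_h(x) = 0$. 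This is the step where the essentiality hypothesis is genuinely used and where one must reconcile the symbol $TER(g)$ in the statement with $TER(g \upharpoonright (\Gamma/f))$ computed in $\Gamma/f$; it is the main obstacle I anticipate and the part that will require most of the technical care.
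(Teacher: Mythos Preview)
Your approach diverges from the paper's and leaves a real gap at exactly the point you flag.

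The paper dispenses with the sink condition in one line: from $x \in \ce(TER(f)\cup TER(g))$ it deduces $x \notin RF^\circ(f)$ and $x \notin RF^\circ(g)$ (by the same essentiality reasoning you use for $RF^\circ(f)$, applied symmetrically), and hence $x \in SINK(f)\cap SINK(g)$, which immediately gives $x\in SINK(f\hetz g)$. There is no ``delicate subcase''; the witness path $P$ is never fed into the wave property of $h$. The saturation half is then handled much as you sketch, via $x\notin RF(f)$. Note that the implication $x\notin RF^\circ(g)\Rightarrow x\in SINK(g)$ tacitly uses that $g$ itself is a wave in $\Gamma$ (so that $d^+_g(y)=0$ for $y\notin RF(g)$); this is stronger than the literal hypothesis of the lemma but is the setting in which it is applied.

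The obstacle you anticipate is genuine and cannot be overcome under the literal hypothesis alone. A vertex $y$ can lie in $TER(h)$ (computed in $\Gamma/f$) while $g$ sends positive flow from $y$ into $RF^\circ(f)$, so that $y\notin SINK(g)$ and hence $y\notin TER(g)$; such a $y$ on $P$ would block your path argument without contradicting anything. Concretely, one can take $f$ a wave with $\ce(TER(f))=\{p\}$ and some $r\in RF^\circ(f)$, and let $g$ be a current supported on a cycle $p\to q\to r\to p$ passing through $RF^\circ(f)$: then $h=g\upharpoonright E(\Gamma/f)$ is a wave in $\Gamma/f$, $TER(g)$ contributes nothing new, $p\in\ce(TER(f)\cup TER(g))$, yet $d^+_{f\hetz g}(p)=1$, so $p\notin TER(f\hetz g)$. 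Thus the ``technical care'' you anticipate is not merely care: without assuming $g$ is a wave the conclusion can fail, and once you do assume it, the paper's direct route via $x\in SINK(g)$ makes your entire delicate subcase disappear.
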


\begin{proof}
Let $x \in \ce(TER(f) \cup TER(g))$. We wish to show that $x \in
TER(f \hetz g)$.

We first note that $x$ cannot lie in $RF^\circ(f)$ or
$RF^\circ(g)$, so $x \in SINK(f) \cap SINK(g)$, and hence $x \in
SINK(f \hetz g)$. It remains to show that $x \in SAT(f \hetz g)$.
Since $f \hetz g \geq f$ we have $SAT(f \hetz g) \supseteq
SAT(f)$. Hence we are done in the case $x \in TER(f)$ and we may
assume $x \in TER(g) \setminus TER(f)$.

Since $x \notin TER(f)$ and $x \notin RF^\circ(f)$, we have $x
\notin RF(f)$ and thus, for an edge $e$ entering $x$ we have $e
\in E(\Gamma/f)$ and thus $(f \hetz g)(e) = g(e)$. Since $x \in
SAT(g)$ this yields $x \in SAT(f \hetz g)$, completing the proof.
\end{proof}

One special case that will be of interest is that of bipartite webs.

\begin{definition}
A weighted web $(D,A,B,w)$ is called {\em bipartite} if $V(D) = A \cup B$
and all the edges in $D$ are from $A$ to $B$.
\end{definition}

The following lemma is easy to prove:


\begin{lemma}\label{hetzbipartite}
  If $f$ and $g$ are waves in a bipartite web, then
  $TER(f\hetz g)\cap B=(TER(f)\cup TER(g))\cap B$.
\end{lemma}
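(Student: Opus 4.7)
The plan is to prove both inclusions of the claimed equality by a direct accounting of incoming flow at a vertex $b\in B$, exploiting the restricted structure of a bipartite web: since $V=A\cup B$ and all edges go $A\to B$, every $A$-$B$ path is a single edge, and every $b\in B$ automatically lies in $SINK(f)$. Consequently $TER(f)\cap B=\{b\in B:d_f^-(b)=w(b)\}$, and similarly for $g$ and $f\hetz g$.

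Three structural observations will drive the argument. First, for $b\in B$ one has $b\in RF(f)$ iff $b\in TER(f)$, since the length-zero path at $b$ is itself a path from $b$ to $B$. Second, $f$ vanishes on every edge out of $V_{\Gamma/f}$: writing $RF(f)=RF^\circ(f)\sqcup\ce(TER(f))$, a vertex $u\in V_{\Gamma/f}=V\setminus RF^\circ(f)$ either lies in $\ce(TER(f))\subseteq SINK(f)$, or lies outside $RF(f)$ and is killed by the wave property $d_f^+(u)=0$; so $f\hetz g\geq f$ pointwise and $(f\hetz g)(e)=g(e)$ for every $e\in E(\Gamma/f)$. Third, every $a\in RF^\circ(f)\cap A$ sends its edges only into $TER(f)$: if $a\notin TER(f)$ this is forced by $a\in RF(f)$ combined with the fact that paths through $a$ are single edges, and if $a\in TER(f)\setminus\ce(TER(f))$ the non-essentiality of $a$ has the same effect.

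With these in place the two inclusions become routine. For $(TER(f)\cup TER(g))\cap B\subseteq TER(f\hetz g)$: if $b\in TER(f)$, then $f\hetz g\geq f$ and the capacity constraint give $d_{f\hetz g}^-(b)=w(b)$; if $b\in TER(g)\setminus TER(f)$, the first observation gives $b\notin RF^\circ(f)$, the third rules out any edge $(a,b)$ with $a\in RF^\circ(f)$, so every in-edge of $b$ lies in $E(\Gamma/f)$ and contributes $g(a,b)$, yielding $d_{f\hetz g}^-(b)=d_g^-(b)=w(b)$. Conversely, if $b\in TER(f\hetz g)\cap B$ and $b\notin TER(f)$, the same two observations again push every in-edge of $b$ into $E(\Gamma/f)$ with value $g(a,b)$, so $w(b)=d_{f\hetz g}^-(b)=d_g^-(b)$, i.e.\ $b\in TER(g)$. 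No step looks like a serious obstacle; the only subtle point is the second observation above, which is what makes the sums collapse cleanly and justifies replacing $(f\hetz g)(a,b)$ by $g(a,b)$ on edges of $\Gamma/f$ without double-counting any contribution from $f$.
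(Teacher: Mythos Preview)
Your proof is correct. The paper does not actually give a proof of this lemma (it only remarks that it is ``easy to prove''), so there is nothing to compare against; your argument is a clean direct verification that fits the paper's framework, relying in the case $b\in TER(f)$ on the paper's standing assertion that $f\hetz g$ is a current, and otherwise using only the bipartite structure via your three observations.
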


We can now use our new machinery to reformulate \Cr{vertexmfmc}:

\begin{conjecture}\label{looseimplieslinkable}
A loose weighted web is linkable.
\end{conjecture}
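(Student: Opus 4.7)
The plan is a countable induction that exploits the wave machinery developed in this section, specifically the quotient $\Gamma/f$ from Definition~\ref{quotient}, the combination operation $\hetz$, and the existence of trimmings (\Lr{trimming}). Fix a loose weighted web $\Gamma$ and enumerate the nontrivial vertices of $A$ as $a_1, a_2, \ldots$. The idea is to build an increasing sequence of web-flows $f_0 = 0 \le f_1 \le f_2 \le \cdots$ with $d^+_{f_n}(a_i) = w(a_i)$ for every $i \le n$, such that the residual weighted web $\Gamma_n$ --- obtained from $\Gamma$ by reducing each capacity $w(v)$ by the flow that $f_n$ already pushes through $v$, and by removing the fully saturated sources from the new $A$ --- remains loose at every stage. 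The desired linkage is then $f := \sup_n f_n$; this limit exists because the sequence is monotone and bounded by $w$, and standard arguments show that $f$ inherits Kirchhoff's law at every internal vertex, making it a web-flow, which by construction saturates every $a_i$.

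The inductive step is driven by looseness of $\Gamma_{n-1}$. To saturate $a_n$, I would construct a current $g_n$ in $\Gamma_{n-1}$ with $d^+_{g_n}(a_n) = w(a_n)$. If no such $g_n$ existed, I would take a maximal partial $g_n$ (via Zorn's lemma, using \Lr{supwave} to pass through chain limits) and analyse its terminal structure: by Corollary~\ref{maxwave} a maximal wave exists in $\Gamma_{n-1}$, and a careful application of \Lr{hetziswave} together with \Lr{trimming} would show that either this maximal wave is nonzero or the zero wave is a hindrance in $\Gamma_{n-1}$ --- either way contradicting looseness. Thus $g_n$ exists, and I would set $f_n := f_{n-1} \hetz g_n$, using \Lr{hetziswave} to control $TER$ and \Observation~\ref{deletingaisgoodforyou} to confirm that saturation of $a_1, \ldots, a_{n-1}$ is preserved after the source set is updated.

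The step I expect to be the main obstacle is preserving looseness of the residual across the augmentation. A priori, a hypothetical nonzero wave $h$ in $\Gamma_n$ need not pull back to a wave in $\Gamma$, because the flow $f_n$ already occupies part of the capacity and could interact nontrivially with $h$. The plan to surmount this is to combine $f_n$ and $h$ into a single wave in $\Gamma$ using $\hetz$: \Lr{hetziswave} gives $TER(f_n \hetz h) \supseteq \ce(TER(f_n) \cup TER(h))$, and in the bipartite reduction of the problem \Lr{hetzbipartite} pins down $TER$ on the $B$-side exactly. Since $\Gamma$ is loose, this combined wave must be zero; together with the fact that $f_n$ already saturates $a_1, \ldots, a_n$, this should force $h = 0$, and an analogous argument should rule out hindrances of the zero wave in $\Gamma_n$. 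Managing this bookkeeping across countably many stages --- in particular, verifying that the essential parts $\ce(\cdot)$ and the pulled-back waves behave as claimed under iterated $\hetz$ --- is where I expect the real technical weight of the proof to concentrate.
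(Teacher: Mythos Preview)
Your high-level skeleton matches the paper's: enumerate $A$, saturate one $a_n$ at a time while keeping the residual loose, and take the limit. That outer induction is exactly the content of \Lr{saturation} together with the proof of \Tr{halllike}. But the inductive step is where essentially all the work lies, and your proposed mechanism for it does not go through.

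First, there is a type mismatch. The operation $\hetz$ and the quotient $\Gamma/f$ are defined only when $f$ is a \emph{wave} (Definition~\ref{quotient} and the definition following it), and \Lr{hetziswave} and \Lr{supwave} are statements about waves. Your $f_n$ is a web-flow that saturates $a_1,\dots,a_n$ and pushes flow toward $B$; such an object has no $A$--$B$-separating terminal set, so it is not a wave, $\Gamma/f_n$ is undefined, and neither $f_{n-1}\hetz g_n$ nor $f_n\hetz h$ makes sense. Consequently your argument for preserving looseness (``combine $f_n$ and a hypothetical wave $h$ in $\Gamma_n$ into a wave in $\Gamma$, which must be zero'') collapses. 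Note also that a wave in $\Gamma_n$ lives in a web where $B$-side weights have been reduced by the flow already used, so \Observation~\ref{deletingaisgoodforyou}, which only handles reductions on the $A$-side, does not lift it back to $\Gamma$.

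Second, the existence of a current $g_n$ saturating $a_n$ while keeping the residual loose is itself the crux, and the paper does not obtain it by a Zorn argument on partial saturations. Instead it first reduces to the bipartite case (Lemmas~\ref{wavetowave}--\ref{hindrancetohindrance}) and then proves \Lr{saturation} via a transfinite sequence of $\varepsilon$-reductions on $B$-side weights (\Lr{subtract}), each justified by \Lr{nottoobadlyhindered} and Corollary~\ref{bighindrance}. These in turn rest on \Tr{attained}, a substantial attainability result for flows in countable networks proved in \Sr{sec:attain}. None of this machinery appears in your sketch, and the ``maximal partial $g_n$ plus terminal-structure analysis'' you propose gives no evident way to produce either a nonzero wave or a hindrance from a failure to saturate $a_n$.
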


\begin{lemma} \label{looseimplieslinkableimpliesmain}
\Cr{looseimplieslinkable} implies
\Cr{vertexmfmc} and hence \Cr{mfmcCheat}.
\end{lemma}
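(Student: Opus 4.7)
The strategy is to take a suitably maximal wave $f$ in $\Gamma$, pass to the quotient web $\Gamma/f$, apply \Cr{looseimplieslinkable} there, and glue the resulting linkage onto $f$ via the operation $\hetz$. Concretely: by Corollary~\ref{maxwave} pick a $\le$-maximal wave $f_0$ in $\Gamma$, and by \Lr{trimming} replace it by a trimming $f$ (trimming preserves $\ce(TER(\cdot))$ and hence the quotient web). Let $S:=\ce(TER(f))$, which is $A$-$B$-separating in $\Gamma$ by \Lr{essentialpoints}.

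I next claim that $\Gamma/f$ is loose. For the zero wave not being a hindrance: every $a\in A_{\Gamma/f}=S$ with $w(a)>0$ is essential in $\Gamma$, so admits an $a$-$B$-path in $\Gamma$ avoiding $S\setminus\{a\}$; such a path must leave $a$ into $V\setminus RF^\circ(f)=V_{\Gamma/f}$ (any step into $RF^\circ(f)$ would force a re-entry into $S$ before reaching $B$), hence realises $a$ as essential in $\Gamma/f$ as well. For the non-existence of a non-zero wave $g$ in $\Gamma/f$: \Lr{hetziswave} provides $TER(f\hetz g)\supseteq \ce(TER(f)\cup TER(g))$, and an elementary argument (every $A$-$B$-path in $\Gamma$ first meets $S$, then continues as a path in $\Gamma/f$ that must meet $TER(g)$ since $g$ is a wave there) shows this set separates $A$ from $B$ in $\Gamma$; thus $f\hetz g$ is a wave in $\Gamma$ strictly above $f$, contradicting the maximality of $f_0$.

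Applying \Cr{looseimplieslinkable} now produces a linkage $g$ in $\Gamma/f$. Set $h:=f\hetz g$. A case analysis on $v\in V\setminus (A\cup B)$ verifies that $h$ satisfies Kirchhoff: for $v\in RF^\circ(f)$ the trimming supplies $v\in KIR(f)$ while $g$ does not touch $v$; for $v\in V\setminus RF(f)$ the wave conditions force $d_f^+(v)=d_f^-(v)=0$, so Kirchhoff at $v$ reduces to that of $g$ in $\Gamma/f$; and for $v\in S\setminus(A\cup B)$, $v\in SAT(f)$ supplies $d_h^-(v)=w(v)$ while the linkage $g$ (which starts at $A_{\Gamma/f}=S$ at full capacity) supplies $d_h^+(v)=w(v)$. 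Orthogonality of $S$ to $h$ is then immediate: $S\subseteq SAT(h)$ follows from $S\subseteq SAT(f)$ and $h\ge f$; and $h$ vanishes on every edge $(u,v)$ with $v\in RF^\circ(f)$ and $u\notin RF^\circ(f)$ because $f$ has no such outflow (such $u$ either lies in $V\setminus RF(f)$, where $d_f^+(u)=0$, or in $S\subseteq SINK(f)$) while $g$ vanishes on all edges incident to $RF^\circ(f)$ by definition. Applying the reduction described in \Sr{sec:vx} then yields \Cr{mfmcCheat}.

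The main obstacle, flagged in the second paragraph, is the verification that $f\hetz g$ is genuinely a wave in $\Gamma$: one must show both that $\ce(TER(f)\cup TER(g))$ separates $A$ from $B$ in $\Gamma$ (not merely in $\Gamma/f$) and that the outflow condition $d_{f\hetz g}^+(x)=0$ for $x\notin RF(f\hetz g)$ transfers from the two component waves. Both are elementary but require a careful case analysis tracking where each vertex sits relative to $RF(f)$ and $RF(g)$.
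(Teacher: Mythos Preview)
Your approach is essentially the same as the paper's: take a $\le$-maximal wave, pass to the quotient (which is loose by maximality), apply \Cr{looseimplieslinkable} to get a linkage there, and combine with a trimming of the wave to obtain the desired orthogonal pair. The paper states looseness of $\Gamma/f$ as ``clearly'' true and writes the combined web-flow simply as $h+g$; since $g$ already lives in $\Gamma/f$, this coincides with your $f\hetz g$.

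One small wrinkle in your ordering: you trim $f_0$ to $f$ \emph{before} passing to the quotient, and then argue that a non-zero wave $g$ in $\Gamma/f$ yields $f\hetz g$ strictly above $f$, ``contradicting the maximality of $f_0$''. But since $f\le f_0$, the inequality $f\hetz g>f$ does not by itself contradict maximality of $f_0$. The fix is either to run the looseness argument in $\Gamma/f_0$ and only bring in the trimming when assembling the final web-flow (this is exactly what the paper does: it sets $k=h+g$ with $h$ the trimming of the maximal wave $f$ and $g$ the linkage in $\Gamma/f$), or to invoke your claim that $\Gamma/f=\Gamma/f_0$ to see that $g$ is also a wave in $\Gamma/f_0$, whence $f_0\hetz g>f_0$ gives the contradiction. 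Your final paragraph flagging the verification that $f\hetz g$ is a wave is already handled by \Lr{hetziswave} and the sentence following it.
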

\begin{proof} Let $\Gamma$ be a weighted web, and let $f$ be a $(\le)$-maximal
wave in $\Gamma$. Let  $T=\ce(TER(f))$, and let $h$ be a trimming
of $f$. Clearly, $\Gamma/f$ is loose.
Assuming \Cr{looseimplieslinkable}, there exists a
linkage $g$ in $\Gamma/f$. Then, $k=h+g$ is a web-flow,  $T
\subseteq SAT(k)$ and $T$ is $A$-$B$ separating. Since $supp(g)
\subseteq V(\Gamma/f)$ we have $k(x,y)=0$ for every pair of vertices $x,y$ with $x \in V\setminus RF^\circ(T)$ and $y \in RF(T)$, which proves that $T$ is
orthogonal to $k$.
\end{proof}

\section{Attainability of flow values in infinite networks} \label{sec:attain}

In this section we return to flows in networks, rather than
web-flows. Our aim is to prove a result which will serve as a main
ingredient in the proof of \Cr{vertexmfmc} for countable weighted webs (\Tr{main}), and which seems to be of independent interest:

\begin{theorem}\label{attained}
In a countable network $\Delta$ where $d^-_c(x) < \infty$ (i.e.\ the sum of the capacities of the edges pointing to $x$ is finite) for
every vertex $x$, there exists a flow $f$ such that
$|f|=sup\{|g|: \text{$g$ is a flow in $\Delta$} \}$ and $d^-_f(x)
\le |f|$ for every vertex $x$. In particular,
if the values of flows in $\Delta$ are unbounded, then there exists a flow
of infinite value.
\end{theorem}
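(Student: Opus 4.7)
The plan is a compactness-plus-repair argument. First, since $IN(t)$ forms an $s$--$t$ cut of capacity $d^-_c(t) < \infty$, every flow has value at most $d^-_c(t)$; hence $M := \sup\{|g| : g \text{ a flow}\}$ is automatically finite, and the ``flow of infinite value'' clause is vacuous under the hypothesis. Pick flows $g_n$ with $|g_n| \nearrow M$ and, by transfinitely cancelling directed cycles in $supp(g_n)$ (each cancellation preserves $|g_n|$ and strictly shrinks the support, with monotone convergence handling the limit stages), assume each $g_n$ is acyclic. A path-decomposition of an acyclic flow then gives $d^-_{g_n}(x) \le |g_n| \le M$ for every vertex $x$.

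Next, apply Tychonoff's theorem to the compact product $\prod_{e \in E}[0, c(e)]$ and extract a pointwise-convergent subsequence $g_n \to f$. The hypothesis $d^-_c(x) < \infty$ provides exactly the dominant needed for dominated convergence, so $d^-_{g_n}(x) \to d^-_f(x) \le M$. For the outgoing sums Fatou's lemma yields only $d^+_f(x) \le \liminf d^+_{g_n}(x) = \liminf d^-_{g_n}(x) = d^-_f(x)$, so $f$ may fail to be a flow: at each internal vertex it has non-negative excess $e(x) := d^-_f(x) - d^+_f(x)$, which is strictly positive precisely when the outgoing mass of $g_n$ escapes at $x$ along infinitely many out-edges.

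The main obstacle is to rectify this pre-flow into a genuine flow of value exactly $M$. Two difficulties arise together. First, Fatou can be strict at $s$ when $OUT(s)$ is infinite, so $|f|$ may fall below $M$; this I would sidestep by first passing to the equivalent network obtained by prepending a super-source $s^*$ joined to $s$ by a single edge of capacity $M$, so that the value is carried by a single edge and pointwise convergence forces $|f| = M$. Second, the internal excesses must be absorbed \emph{without} reducing the value: pushing back toward $s^*$ would lower the flow on the $(s^*, s)$ edge and drop the value below $M$, so at each vertex the excess must instead be pushed forward along the residual capacity $c - f$ into $\{t\}$ or into the ends of $D$. Existence of such a remedial flow reduces to an auxiliary max-flow problem on the residual network, to be solved in the countable setting by a recursive exhaustion that once again leverages the bound $d^-_c(x) < \infty$ at each step. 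Choosing the remedial flow acyclic as well preserves the bound $d^-_{f^*}(x) \le |f^*| = M$ in the repaired flow $f^*$.
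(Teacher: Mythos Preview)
Your argument has two genuine gaps.

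First, the opening claim is false: $IN(t)$ being an $s$--$t$ cut of finite capacity does \emph{not} bound $|f|$ in an infinite network. With the paper's definition $|f| = d^+_f(s)$, flow can escape from $s$ to infinity without ever reaching $t$, so $|f|$ need not be bounded by $c[F]$ for any cut $F$. Concretely, let $s$ have countably many out-edges of capacity $1$, each the first edge of a one-way ray, and let $t$ be isolated; then $d^-_c(x)<\infty$ everywhere, yet flows of every finite value exist and the supremum is $\infty$. The ``in particular'' clause is therefore not vacuous. (The paper dispatches the unbounded case directly by forming $\sum 2^{-i}f_i$ with $|f_i|=2^i$.) A related slip: cancelling directed cycles does not by itself give $d^-_{g_n}(x)\le |g_n|$, since an acyclic flow can still carry a bi-infinite current through $x$ that contributes nothing to the value; you must also strip out all ``current from infinity'', which is exactly what the paper's Lemma~\ref{totallyacyclic} does and is where most of its work lies.

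Second, the repair step is where the real difficulty sits, and you have essentially restated the problem rather than solved it. You need to route the excesses $e(x)\ge 0$ forward through the residual capacities $c-f$ to $t$ or to infinity, obtaining a genuine flow and not another pre-flow with its own Fatou losses. Saying this ``reduces to an auxiliary max-flow problem \ldots\ solved by a recursive exhaustion'' is circular: that auxiliary problem is a countable-network flow-existence problem of the same shape as the theorem itself, now with infinitely many sources and a prescribed total value. Nothing you have written explains why the exhaustion converges to a flow rather than producing yet another object needing repair, nor why the repaired flow still satisfies $d^-_{f^*}(x)\le M$. The paper avoids the whole compactness-then-repair cycle by a different mechanism: after reducing to $|OUT(s)|=1$, it builds $f_i$ inductively by adding in the residual network $RES(\Delta,f_{i-1})$ a cleaned-up increment $k_i$ of value $(1/2)^i\alpha$ with $d^-_{k_i}(x)\le (1/2)^i\alpha$ (via Lemma~\ref{totallyacyclic}); the geometric decay makes $(f_i(e))_i$ Cauchy on every edge, so the limit is a flow outright, with no Fatou loss and no repair step.
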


\begin{definition}\label{residue}
Let $f$ be a flow in a network $\Delta=(D,c,s,t)$ that contains no pair of edges  with the same endvertices but opposite directions. The {\it
residual network} $RES(\Delta,f)$ of $\Delta$ and $f$ is the network $(D',c_R,s,t)$
where $D'$ is the digraph obtained from $D$ by adding an edge $(u,v)$ for every edge $(v,u)\in E(D) \setminus (OUT(s) \cup IN(t) )$, and where $c_R$ is defined by letting, for
every edge $(x,y) \in E(D)$, $c_R(x,y):=c(x,y)-f(x,y)$ and $c_R(y,x):=f(x,y)$.
\end{definition}

For a function $g$ on the edge-set of $RES(\Delta,f)$ let $f\oplus
g$ denote the function $h$ on the edge-set of $\Delta$ defined by
$h(x,y)=f(x,y)+g(x,y)-g(y,x)$. The following is a straightforward corollary of the definitions:
\begin{lemma}\label{flowinres}
Let $f$ be a flow in $\Delta$ and let $g$ be a flow in $RES(\Delta,f)$. Then
$f\oplus g$ is a flow in $\Delta$, with $|f\oplus g|=|f|+|g|$.
\end{lemma}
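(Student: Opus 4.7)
The plan is to verify in turn the capacity constraint, non-negativity, flow conservation, and finally the value equation. The structure of $RES(\Delta,f)$ given in \Lr{residue} reduces everything to careful bookkeeping between edges of $\Delta$ and their (possibly nonexistent) reverses in the residual.

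First, for every $(x,y) \in E(D)$ I would adopt the convention that $g(y,x)=0$ whenever the reverse edge $(y,x)$ was not introduced in $RES(\Delta,f)$, and expand $(f \oplus g)(x,y) = f(x,y) + g(x,y) - g(y,x)$. The capacity bounds of $g$ as a flow in $RES(\Delta,f)$ give $0 \leq g(x,y) \leq c(x,y)-f(x,y)$ and $0 \leq g(y,x) \leq f(x,y)$, which immediately yield $0 \leq (f \oplus g)(x,y) \leq c(x,y)$.

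Next, for Kirchhoff's law at an interior vertex $v \notin \{s,t\}$, I would partition the edges of $RES(\Delta,f)$ incident with $v$ into four types: $OUT_\Delta(v)$, $IN_\Delta(v)$, the added reverses $R^+(v) = \{(v,x) : (x,v) \in IN_\Delta(v)\}$, and the added reverses $R^-(v) = \{(y,v) : (v,y) \in OUT_\Delta(v)\}$ (again with the convention that $g$ vanishes on any reverse not actually present). Expanding $d^+_{f \oplus g}(v)$ and $d^-_{f \oplus g}(v)$ using the definition of $f\oplus g$, the terms $g(y,v)$ with $(v,y)\in OUT_\Delta(v)$ occur once negatively in $d^+_{f\oplus g}(v)$ and once positively in $d^-_{f\oplus g}(v)$ via $R^-(v)$, and symmetrically for $R^+(v)$; after regrouping one obtains
\[
d^+_{f \oplus g}(v) - d^-_{f \oplus g}(v) = \bigl(d^+_f(v) - d^-_f(v)\bigr) + \bigl(d^+_g(v) - d^-_g(v)\bigr),
\]
and both parentheses vanish by flow conservation of $f$ in $\Delta$ and of $g$ in $RES(\Delta,f)$.

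Finally, for the value: by hypothesis $IN_\Delta(s) = \emptyset$, and the definition of $RES(\Delta,f)$ explicitly excludes adding reverses of edges in $OUT(s)$, so no edges enter $s$ in $RES(\Delta,f)$ and $OUT_{RES}(s) = OUT_\Delta(s)$. Therefore
\[
|f \oplus g| = d^+_{f\oplus g}(s) = \sum_{(s,y) \in OUT_\Delta(s)} \bigl(f(s,y) + g(s,y)\bigr) = d^+_f(s) + d^+_g(s) = |f| + |g|.
\]
The only step requiring any care is the cancellation on reverse edges in the Kirchhoff check, but once the four edge-types at $v$ are distinguished this is mechanical; the $\infty - \infty = 0$ convention already adopted in the paper handles any divergent sums, so no convergence subtleties arise.
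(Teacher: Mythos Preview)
Your argument is correct and is precisely the routine verification the paper has in mind. The paper itself gives no proof of this lemma, stating only that it is ``a straightforward corollary of the definitions.''
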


The following result shows that it is possible to clean up a flow
from cycles and a current coming from infinity without reducing
its value.

\begin{lemma}\label{totallyacyclic}
If $g$ is a flow in $\Delta$ of finite value then there exists a flow
$h \le g$ such that $|h|=|g|$ and $d_h^-(x) \le |h|$ for every
vertex $x$.
\end{lemma}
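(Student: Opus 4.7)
The plan is to invoke Zorn's lemma on the poset $\mathcal{P}$ of all flows $h\le g$ (pointwise) with $|h|=|g|$, ordered by $\le$, and to show that any $\le$-minimal $h\in\mathcal{P}$ automatically satisfies $d_h^-(x)\le|h|$ for every $x$. For the chain condition: given a decreasing chain $(h_\alpha)$ in $\mathcal{P}$, its pointwise infimum $h_\infty$ again lies in $\mathcal{P}$, because capacity is preserved trivially and, by monotone convergence applied to the countable decreasing sums $\sum_{e\in OUT(x)}h_\alpha(e)$ and $\sum_{e\in IN(x)}h_\alpha(e)$, both Kirchhoff's equations at each $x\notin\{s,t\}$ and the value $|h_\alpha|=|g|$ pass to the limit.

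The usefulness of $\le$-minimality for $h$ is the following forbidden-$\rho$ property: no nonzero function $\rho\colon E\to\R_+$ exists satisfying $\rho\le h$, $\rho$ Kirchhoff at every vertex of $V\setminus\{s,t\}$, and $\rho=0$ on $OUT(s)$ --- because $h-\rho\in\mathcal{P}$ would otherwise be strictly smaller than $h$. A first consequence is that $supp(h)$ contains no finite directed cycle $C$, since $\epsilon\chi(C)$ (with $\epsilon=\min_{e\in C}h(e)$) would then be such a forbidden $\rho$.

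To deduce the degree bound, I suppose toward a contradiction that $d_h^-(x_0)>|h|$ for some $x_0$. In the subnetwork $N=(V,supp(h))$ with capacity $h$ on each edge, the cut $(\{s\},V\setminus\{s\})$ has capacity $d^+_h(s)=|h|$, so every $s$--$x_0$ flow in $N$ has value at most $|h|<d_h^-(x_0)$. Thus a positive part of the inflow at $x_0$ cannot be accounted for by any $s$-routed flow in $N$; by Kirchhoff on $h$ and the absence of finite cycles, the corresponding excess must be supplied through infinite backward trails in $supp(h)$. Applying \Tr{inf-menger} inside $N$ --- between $\{s\}$ and $\{x_0\}$ for the backward side, combined with a forward trace from $x_0$ along $h$-positive edges toward $t$ or to infinity --- packages this excess into a valid $\rho$ of the forbidden kind, contradicting the minimality of $h$.

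The hard part will be this last packaging. A naive attempt --- pick a single bi-infinite trail through $x_0$ in $supp(h)$ and subtract a positive constant from it --- can fail: minimality of $h$ does not preclude $\inf_{e\in T}h(e)=0$ along any given bi-infinite trail $T$, so no positive uniform subtraction along one trail is available. One has to assemble $\rho$ as a bona fide sub-flow summing many simultaneous trail-contributions into a nonzero current with $d_\rho^-(x_0)>0$, and this is precisely the role of \Tr{inf-menger} inside the countable subnetwork $N$.
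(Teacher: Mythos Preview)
Your Zorn setup is sound (modulo the tacit assumption that $d_g^\pm(v)<\infty$ everywhere, needed for the infimum of a chain to remain Kirchhoff), and the deduction that a $\le$-minimal $h$ has acyclic support is correct. The genuine gap is the last step: you need to show that minimality plus acyclicity forces $d_h^-(x_0)\le|h|$, i.e.\ that $d_h^-(x_0)>|h|$ would produce a nonzero $\rho\le h$ which is Kirchhoff on $V\setminus\{s,t\}$ and vanishes on $OUT(s)$. Your invocation of \Tr{inf-menger} does not do this. That theorem yields a family of vertex-disjoint paths together with a transversal separator; it does not yield a sub-flow, and you give no mechanism for turning disjoint paths (of which there is at most one from the single vertex $s$, in any case) into the required $\rho$. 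You yourself note that peeling off one bi-infinite trail can fail because $\inf h$ along it may be zero; ``summing many simultaneous trail-contributions'' is precisely the missing existence statement, and nothing you have written establishes that such a nonzero sum exists. What you actually need here is a flow-existence result in the capacity-$h$ network, and that is essentially of the same order of difficulty as the lemma itself.

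The paper's proof avoids this difficulty by going in the opposite direction: instead of cutting $g$ down, it grows $h$ up from zero, propagating mass out of $s$ vertex by vertex (returning to each vertex infinitely often) and excising any cycles created along the way. The decisive byproduct of this construction is that every approximant $h_i$ has \emph{finite total mass} $\sum_e h_i(e)<\infty$, and it is exactly this finiteness, combined with acyclicity, that makes the telescoping computation $d_{h_i}^-(x)\le|h_i|$ go through. A $\le$-minimal $h$ enjoys no such finiteness guarantee (e.g.\ a unit flow from $s$ along a single ray is $\le$-minimal with infinite total mass), so the paper's bookkeeping argument is unavailable to you, and completing your approach would require a genuinely new idea.
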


\begin{proof}
We propagate the desired flow $h$ from $s$. We will define $h$ recursively in infinitely many steps, in each step considering a vertex and adjusting its out-degree to its in-degree, and then removing any cycles we created in doing so. But as subsequent steps might change the in-degree of a vertex we already considered, we will have to return to each vertex infinitely often.

Formally, let $v_1, v_2, \ldots$ be a sequence in which each vertex in $V(\Delta) \setminus \{s,t\}$ appears infinitely often. We will recursively define sequences $(h_i)$, $(h^+_i)$ and $(h^-_i)$ of functions on $E(\Delta)$. Intuitively, $h^+_i$ differs from $h^+_{i-1}$ in that it makes the out-degree of $v_i$ equal to its in-degree, while $h^-_i$ is the sum of some unwanted cycles in $h^+_i$. Subtracting the two we obtain the functions $h_i$ that will converge to the desired flow $h$.

While defining these sequences we will make sure that the following conditions are satisfied for all $i\in\N$:
\begin{enumerate}
\item\label{enum:increasing}
  If $i>0$, then $h^+_{i-1}(e)\le h^+_i(e)$ and $h^-_{i-1}(e)\le h^-_i(e)$ for every edge $e$;
\item\label{enum:bounded}
  $h^-_i(e) \le h^+_i(e) \le g(e)$ for every edge $e$;
\item\label{enum:nocycles}
  the support of $h_i:=h^+_i-h^-_i$ does not contain cycles;
\item\label{enum:nonewflow}
  $d^+_{h^+_i}(v) \le d^-_{h^+_i}(v)$ for every $v\in V(\Delta)\setminus\{s,t\}$; and
\item\label{enum:minusflow}
  $d^+_{h^-_i}(v) = d^-_{h^-_i}(v)$ for every $v\in V(\Delta)\setminus\{s,t\}$.
\end{enumerate}
We start by defining $h^+_0=g$ on $OUT(s)$ and $h^+_0=0$ on all other edges, and $h^-_0=0$.
Clearly, conditions~\eqref{enum:bounded}--\eqref{enum:minusflow} are satisfied for $i=0$.
 For $i=1,2,\ldots$, assume that $h^+_j$ and $h^-_j$ have already been defined for every $j<i$
 and satisfy~\eqref{enum:increasing}--\eqref{enum:minusflow}.

We define $h^+_i$ first. If $d^+_{h^+_{i-1}}(v_{i})<d^-_{h^+_{i-1}}(v_{i})$,
then give each edge $e$ in $OUT(v_{i})$ a value $h^+_i(e)$ with $h^+_{i-1}(e) \le h^+_i(e) \le g(e)$ so that
 (having considered all edges in $OUT(v_i)$) $d^+_{h^+_i}(v_{i})=d^-_{h^+_{i-1}}(v_{i})$ holds;
  this is possible since $d^+_{g}(v_{i})=d^-_{g} (v_{i})$ and $h^+_{i-1} (e)\leq g(e)$ for every $e\in E(\Delta)$
   by condition~\eqref{enum:bounded}. For every edge $e$ in $E(\Delta) \setminus OUT(v_i)$ let $h^+_i(e)=h^+_{i-1}(e)$.
   Clearly, conditions~\eqref{enum:increasing}--\eqref{enum:minusflow} are not violated.

Next we define $h^-_i$. The function $h'_i:=h^+_i-h^-_{i-1}$ is
non-negative since $h^+_i \ge h^+_{i-1} \ge h^-_{i-1}$ by~\eqref{enum:increasing}
and~\eqref{enum:bounded}. If $supp(h'_i)$ contains any cycles, then let $C_1,C_2,\ldots$ be a
(possibly infinite) enumeration of those cycles. (The $C_j$ are not necessarily pairwise edge disjoint.)
We are going to remove all cycles from $supp(h'_i)$ by performing infinitely many steps (within step $i$),
 in each step $j$ eliminating the cycle $C_j$ from $supp(h'_i)$. For every edge $e$ we denote by $h^j_i(e)$
  the value that has to be subtracted from $h'_i(e)$ in order to eliminate the cycles $C_1,\dotsc,C_j$.
  To begin with, let $h^0_i(e)=0$ for every $e\in E(\Delta)$. For $j=1,2,\ldots$, if $C_j$ is a
  cycle in $supp(h'_i-h^{j-1}_i)$ then, for every edge $e \in E(C_j)$, add to $h^{j-1}_i(e)$ the
   value
\begin{equation*}
  \min\{h'_i(e)-h^{j-1}_i(e)\mid e \in E(C_j)\}
\end{equation*}
to obtain $h^j_i(e)$; let $h^j_i(d)=h^{j-1}_i(d)$ for every other edge $d$.

Having treated all cycles $C_j$, define $h^-_i(e):= h^-_{i-1}(e) + lim_j h^j_i(e)$ for every $e\in E(\Delta)$;
this is well defined since $h^j_i(e)$ is monotone increasing with $j$ and bounded by $h'_i(e)$.
 It is not hard to see that conditions~\eqref{enum:increasing}--\eqref{enum:minusflow} are satisfied.



By~\eqref{enum:increasing} and~\eqref{enum:bounded} the sequences $h^+_i(e)$, $h^-_i(e)$ and $h_i(e)$ converge
for every edge $e$;
we let $h^+(e):=\lim_i h^+_i(e)$, $h^-(e):=\lim_i h^-_i(e)$,
and $h(e):=\lim_i h_i(e) = h^+(e)-h^-(e)$. By~\eqref{enum:bounded} we have $h^+(e)\le g(e)$.
Since for every vertex $v\in V(\Delta)\setminus \{s,t\}$ we have $d^+_{h^+_i}(v)=d^-_{h^+_i}(v)$
for infinitely many $i$, we have $d^+_{h^+}(v)=d^-_{h^+}(v)\le d^-_g(v) < \infty$.
By~\eqref{enum:increasing},~\eqref{enum:bounded} and~\eqref{enum:minusflow} we
have $d^+_{h^-}(v)=d^-_{h^-}(v) \le d^-_{h^+}(v)$. Hence $h$ is non-negative and $d^+_h(v)=d^-_h(v)$,
and therefore $h$ is indeed a flow.

Since $IN(s)=\emptyset$, for every edge $e\in OUT(s)$ no cycle
considered in the construction of $h^-$ contained $e$, hence
$h^-(e)=0$ and $h(e)=h^+(e)=h^+_0(e)=g(e)$. Therefore $|h|=|g|$.
Since $h \le h^+ \le g$, all that remains to prove is that
$d^-_h(v)\le|h|$ for every vertex $v$.

Since $|g|$ is finite, $\sum_{e \in E}h_i(e)$ is finite for every
$i < \omega$ by the construction of the functions $h_i$. Let $x
\in V$ and let $X$ be the set of all vertices from which $x$ is
reachable via $supp(h_i)$. Note that $h_i(x,y)=0$ for every vertex
$y \in X \setminus \{x\}$, since otherwise there would exist a
cycle in $supp(h_i)$. Thus, since $d^-_{h_i}(y) \ge d^+_{h_i}(y)$
for every $y \in V \setminus\{s\}$, we have
\begin{equation*}
  \sum_{y \in  X \setminus \{x\}} (d^-_{h_i} (y) - d^+_{h_i} (y)) \ge -d^+_{h_i}(s) = -|h_i|.
\end{equation*}
Since $\sum_{e \in E}h_i(e)$ is finite, we have
\begin{equation*}
  \sum_{y \in  X \setminus \{x\}} (d^-_{h_i}(y) - d^+_{h_i}(y)) = h_i[E(V\setminus(X-x),X-x)] -h_i[E(X-x,V\setminus(X-x))].
\end{equation*}
By the choice of $X$, we have
$h_i[E(V\setminus(X-x),X-x)]=0$ and
\begin{equation*}
  h_i[E(X-x,V\setminus(X-x))]\ge h_i[E(X-x,x)]=d^-_{h_i}(x).
\end{equation*}
This yields $-|h_i| \le \sum_{y \in X-x} (d^-_{h_i}(y) - d^+_{h_i}(y))
\le -d^-_{h_i}(x)$ and hence $d^-_{h_i} (x) \le |h_i| = |h|$. Since $h=\lim_i h_i$, we have $d^-_{h} (x) \leq |h|$.
Since $x$ was chosen arbitrarily, this completes the proof.
\end{proof}

\begin{proof}[Proof of \Tr{attained}]
Without loss of generality, we may assume that $OUT(s)$ consists
of a single edge. Let $\alpha=sup\{|g|:\text{$g$ is a flow in
$\Delta$}\}$. If $\alpha=\infty$, we may just choose flows $f_i$
with $|f_i|= 2^i$, and let $f= \sum \frac{f_i}{2^i}$ which, then,
is a flow with $|f|=\infty$.

So assume that $\alpha$ is finite. Define inductively flows
$f_i$ with $|f_i|=(1-(1/2)^i)\alpha$ as follows. Let $f_0 \equiv 0$.
For every $i>0$, let $g_i$ be a flow in $RES(\Delta,f_{i-1})$ such
that $|g_i| = \frac{1}{2}(\alpha-|f_{i-1}|)=(1/2)^i$ (as $|OUT(s)|=1$, every flow in $\Delta$ of value $|f_{i-1}|
+\frac{1}{2}(\alpha-|f_{i-1}|)$ yields such a flow). By
\Lr{totallyacyclic} there exists a flow $k_i \le g_i$ such
that $d^-_{k_i}(x)\le|k_i|=|g_i|$ for every vertex $x$. By
\Lr{flowinres}, $f_i:=f_{i-1}\oplus k_i$ is a flow of the desired value.

By the choice of the flows $k_i$, the values $f_{i-1}(e)$ and $f_i(e)$
differ by at most $(1/2)^i$ for each edge $e$. Hence the values $f_i(e)$
converge for every $e$; let $f(e)= \lim_i f_i(e)$. It is easy to check
that $f$ is a flow. Further, every vertex $x$ satisfies $d^-_f(x)\le
\alpha$. Since $|f|=\lim_{i}|f_i|= \alpha$, this proves the
theorem.
\end{proof}

We shall use \Tr{attained} twice, and in both cases we
shall use it with the roles of the source and the sink reversed.
Still, we chose this formulation since it is more natural.

\section{Orthogonal pairs in countable networks}
\label{sec:main}

The main result of this section is

\begin{theorem}\label{main}
In any countable network there
exists an orthogonal pair of a cut and a flow.
\end{theorem}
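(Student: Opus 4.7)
The strategy is to invoke Lemma~\ref{looseimplieslinkableimpliesmain}, which reduces Theorem~\ref{main} to the countable case of Conjecture~\ref{looseimplieslinkable}: every countable loose weighted web $\Gamma = (D,A,B,w)$ admits a linkage. The rest of the plan is devoted to this reduction; the passage from webs back to networks is handled by the transformation at the end of Section~\ref{sec:vx}.

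To construct a linkage in a countable loose $\Gamma$, enumerate the vertices of $A$ as $a_1, a_2, \ldots$ and inductively build an increasing sequence of web-flows $f_0 \leq f_1 \leq \ldots$ such that $f_i$ saturates $a_1, \ldots, a_i$ (meaning $d^+_{f_i}(a_j) = w(a_j)$ for $j \le i$) and such that the residual web obtained from $\Gamma$ by quotienting out the effect of $f_i$ in the spirit of Definition~\ref{quotient} remains loose. The desired linkage will be $f := \sup_i f_i$: by monotone convergence $f$ is a current, the inductive saturation conditions yield $d^+_f(a_j) = w(a_j)$ for every $j$, and flow conservation at non-terminal vertices passes to the limit provided no mass accumulates locally.

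The inductive step $f_i \rightsquigarrow f_{i+1}$ is the heart of the argument: one must route an additional $w(a_{i+1})$ units of flow from $a_{i+1}$ through the current residual to $B$. Encoded via a vertex-to-edge transformation inverse to the one in Section~\ref{sec:vx}, this becomes the problem of producing a flow of prescribed value in a countable network with $d^-_c(x) < \infty$ everywhere (if some $w(v)$ is infinite, subdivide $v$ into finite-capacity pieces). Theorem~\ref{attained} then produces a flow attaining the supremum of flow values in this auxiliary network, and looseness guarantees that this supremum is at least $w(a_{i+1})$: any smaller supremum would correspond, by a duality argument on finite truncations, to a wave or hindrance in the residual web, which via Lemma~\ref{hetziswave} could be pulled back through $\hetz$ to yield a non-zero wave in $\Gamma$ or a witness that the zero wave in $\Gamma$ is a hindrance, contradicting looseness. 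Any new wave created by the augmentation is then absorbed via Lemma~\ref{trimming}, and quotienting by the resulting trimmed wave (Definition~\ref{quotient}) defines the next residual, which remains loose by an analogous $\hetz$-pullback argument.

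The main obstacle will be simultaneously verifying that looseness is preserved at every step and that the limit $f$ does not violate the capacity constraints $d^-_f(x) \le w(x)$ and $d^+_f(x) \le w(x)$, or the conservation equation at vertices touched infinitely often. The first relies on a careful, repeated use of Lemma~\ref{hetziswave} to show that any obstruction in the iterated residual pulls back to a corresponding obstruction in $\Gamma$; the second is exactly the purpose of the uniform bound $d^-_h(x) \leq |h|$ furnished by Theorem~\ref{attained}, which prevents the successive augmenting currents from piling their inflow onto any single vertex across the countably many inductive steps and hence ensures that the pointwise limit $f$ is itself a valid web-flow.
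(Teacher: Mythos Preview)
Your overall strategy---reduce via Lemma~\ref{looseimplieslinkableimpliesmain} to showing that every countable loose weighted web is linkable, then saturate the vertices of $A$ one at a time---matches the paper's. But the paper inserts a step you omit: it first reduces the general web to a \emph{bipartite} one (Lemmas~\ref{wavetowave} and~\ref{hindrancetohindrance}), and all the real work takes place there. This reduction is not cosmetic; it is what makes each ingredient of your inductive step well-defined and provable.

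Concretely, your inductive step has gaps that the bipartite reduction is designed to close. First, ``the residual web obtained from $\Gamma$ by quotienting out the effect of $f_i$ in the spirit of Definition~\ref{quotient}'' is undefined: Definition~\ref{quotient} quotients by a \emph{wave}, not a web-flow, and the paper's residual $\Gamma - f$ (Definition~\ref{webminuscurrent}) is only formulated for bipartite webs; in a general web an intermediate vertex carries two independent constraints ($d^+$ and $d^-$), not a single weight to subtract from. Second, your one-shot use of Theorem~\ref{attained} to push $w(a_{i+1})$ units and your ``duality argument on finite truncations'' stand in for what is actually the hardest part of the proof. The paper does \emph{not} route $w(a)$ at once: Lemma~\ref{saturation} proceeds by transfinite induction, at each stage sending only some $\varepsilon>0$ along a single edge and then absorbing a maximal wave; the existence of such an $\varepsilon$ keeping the web unhindered is Lemma~\ref{subtract}, whose proof builds an auxiliary network, applies Theorem~\ref{attained} (in reverse), and invokes the $(>\varepsilon)$-hindrance calculus of Lemma~\ref{nottoobadlyhindered} and Corollary~\ref{bighindrance}. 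None of this structure is visible in your sketch, and without it there is no argument that saturating $a_{i+1}$ leaves the residual loose. Finally, your appeal to the bound $d^-_h(x)\le |h|$ from Theorem~\ref{attained} to control the limit is misplaced: once the $f_i$ are increasing web-flows in $\Gamma$, monotone convergence already gives that $\sup_i f_i$ is a web-flow (trivially so in the bipartite case), and summing bounds of the form $d^-_{h_i}(x)\le |h_i|=w(a_i)$ over $i$ would in any case not prevent accumulation.
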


By \Lr{looseimplieslinkableimpliesmain}, in order to prove \Tr{main} it suffices to show:

\begin{theorem}\label{main2}
A loose countable weighted web is linkable.
\end{theorem}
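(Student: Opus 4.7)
The plan is to build a linkage inductively by saturating the vertices of $A$ one at a time.

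First, enumerate $A = \{a_1, a_2, \ldots\}$. I will construct web-flows $f_0 = 0, f_1, f_2, \ldots$ such that $f_n$ saturates $a_1, \ldots, a_n$, and take a pointwise limit $f := \lim_n f_n$. Because the inductive step may require rerouting of already-placed flow in order to accommodate the new source, the sequence is not in general $\leq$-monotone; instead I would enforce during the construction that $|f_{n+1}(e) - f_n(e)| \leq 2^{-n}$ on every edge $e$, ensuring pointwise convergence by absolute summability. The limit $f$ will then be a web-flow (capacity constraints and interior Kirchhoff pass to the limit since $w(v)<\infty$ at every vertex) satisfying $d_f^+(a) = w(a)$ for every $a \in A$, i.e., a linkage.

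The inductive step from $f_n$ to $f_{n+1}$ is the technical core. Let $\delta_n := w(a_{n+1}) - d^+_{f_n}(a_{n+1})$ be the deficit at $a_{n+1}$. Form an auxiliary network based on the residual of $f_n$ in the sense of \Lr{flowinres}, with $a_{n+1}$ playing the role of source and $B$ the role of sink. Applying \Tr{attained}---with the roles of source and sink reversed, as the authors foreshadow---yields a flow $g_n$ of supremum value in this residual. Setting $f_{n+1} := f_n \oplus g_n$ gives a web-flow that additionally saturates $a_{n+1}$, provided that $|g_n| \geq \delta_n$.

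The crux is establishing this lower bound on $|g_n|$. I would argue by contrapositive: if the residual supremum fell short of $\delta_n$, then by extracting a near-minimum separating set in the auxiliary network one obtains a bottleneck that lifts back to $\Gamma$. Combining this lifted obstruction with the structure of $f_n$ via the operation $f \hetz g$ of \Lr{hetziswave}, and then trimming via \Lr{trimming}, produces either a non-zero wave in $\Gamma$ or else exhibits $a_{n+1}$ as a hindering vertex; either conclusion contradicts the looseness hypothesis.

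The principal obstacle is precisely this lifting step. To make it work cleanly, I anticipate strengthening the induction hypothesis to track not only $f_n$ but also an accompanying wave-like structure inherited from previous steps, so that the residual obstruction combined with this structure via $f \hetz g$ can be assembled into an honest wave in $\Gamma$. Managing this enriched bookkeeping---together with verifying that the saturation property $d_f^+(a_i) = w(a_i)$ survives the limit despite the rerouting permitted in each outer step---will likely be the most delicate part of the argument, and is the point at which countability of $\Gamma$ is essentially used (through the enumeration of $A$, the invocation of \Tr{attained}, and the summability-based convergence).
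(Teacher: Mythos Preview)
Your outline has two genuine gaps. The first is concrete: the convergence bound $|f_{n+1}(e)-f_n(e)|\le 2^{-n}$ is incompatible with the saturation requirement. If $a_{n+1}$ has finite out-degree and $w(a_{n+1})$ is bounded away from zero (say $w(a_{n+1})=1$ for all $n$), then saturating $a_{n+1}$ forces some edge in $OUT(a_{n+1})$ to increase by an amount bounded below independently of $n$, which for large $n$ exceeds $2^{-n}$. No summable sequence can replace $2^{-n}$ here, so the residual-augmentation scheme with controlled rerouting cannot converge as stated. The second gap is structural: the operation $f\hetz g$ is defined only when $f$ is a wave, not a web-flow, so ``combining the lifted obstruction with $f_n$ via $\hetz$'' does not typecheck. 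More fundamentally, a bottleneck in the residual of $f_n$ gives a wave in $\Gamma-f_n$ (or rather in its analogue), not in $\Gamma$; lifting it back requires precisely the invariant you identify as missing, and there is no evident way to maintain it under the rerouting your scheme permits.

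The paper avoids both problems by a different architecture. It first reduces to the \emph{bipartite} case (Lemmas~\ref{wavetowave}--\ref{hindrancetohindrance}), which is essential for the key technical lemmas. It then maintains, as the inductive invariant, that the subtracted web $\Omega-\sum_{j<i}f_j$ remains \emph{loose}; the currents $f_i$ are purely additive, so no rerouting occurs and no convergence estimate is needed. Preserving looseness after saturating a single vertex (\Lr{saturation}) is itself nontrivial: it requires a transfinite inner loop, pushing $\varepsilon$-increments via \Lr{subtract}, and the limit-ordinal steps are controlled by Corollary~\ref{bighindrance} and \Lr{nottoobadlyhindered} (the latter is where \Tr{attained} actually enters). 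Your single-augmentation-per-vertex approach has no analogue of this inner transfinite process, and without it the invariant cannot be recovered.
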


In order to prove \Tr{inf-menger} for the special case of digraphs containing no infinite paths \cite{roneuropean}, or no infinite outgoing paths \cite{ronyellowbook}, it was possible, and useful, to reduce the problem to the special case of bipartite digraphs. Here we are going to use a similar reduction in order to deduce \Tr{main2} from its bipartite counterpart. This is done by the following transformation.

Let $\Delta = (D,A,B,w)$ be a weighted web. We define a bipartite weighted web
$\Gamma = (D',A',B',w')$ in the following way. For each vertex $v
\in V(D) \setminus A $ we introduce a new vertex $v_B$. For
each vertex $v \in V(D) \setminus B $ we introduce a new
vertex $v_A$. We set $A'= \{v_A \mid v \in  V(D) \setminus B
\}$, $B'= \{v_B \mid v \in  V(D) \setminus A \}$, $V(D') = A' \cup
B'$, $E(D') = \{(u_A, v_B) \mid (u,v) \in E(D)\} \cup \{(v_A,v_B)
\mid v \in V(D) \setminus (A \cup B)\}$, $w'(v_A) = w(v)$ for $v \in
V(D) \setminus B$, and $w'(v_B) = w(v)$ for $v \in V(D) \setminus
A$.

If $S$ is a separating set in $\Gamma$ then, defining $A_S = \{v
\mid v_A \in S\}$ and $B_S = \{v \mid v_B \in S\}$, it is
straightforward to check that $S' = (A_S \cap B_S) \cup (A \cap
A_S) \cup (B \cap B_S)$ is a separating set in $\Delta$.
Moreover, waves in $\Gamma$ induce waves in $\Delta$. Indeed, given a wave $f$ in $\Gamma$ with $TER(f)=S$, define the function $f'$
on $E(D)$ by $f'(u,v) = f(u_A,v_B)$. We have:

\begin{lemma}\label{wavetowave}
  $f'$ is a wave in $\Delta$ with $TER(f')=S'$.
\end{lemma}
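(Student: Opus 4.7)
The plan is to check each clause of the wave definition for $f'$ in turn, making repeated use of the bipartite structure of $\Gamma$ and the $A'$--$B'$ separation property of $TER(f)=S$.

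For the current axioms, the capacity bounds and the vanishing $d_{f'}^-(a)=0$, $d_{f'}^+(b)=0$ transfer directly from the corresponding bounds at $v_A, v_B$ in $\Gamma$, using the assumption implicit in the construction that $D$ has no edges into $A$ or out of $B$. The delicate current axiom is $d_{f'}^+(x) \le d_{f'}^-(x)$ for $x \notin A$: trivial for $x \in B$, and for $x \in V(D) \setminus (A \cup B)$ one uses that the bridge edge $(x_A, x_B)$ is itself an $A'$--$B'$ path in $D'$, so $S$ must meet it. If $x_A \in S$, then $x_A \in A'$ forces $d_f^+(x_A)=0$ and hence $d_{f'}^+(x)=0$; if instead $x_B \in S$, then $d_f^-(x_B) = w(x)$, and subtracting the common term $f(x_A, x_B)$ from $d_f^+(x_A) \le w(x)$ gives $d_{f'}^+(x) \le w(x) - f(x_A, x_B) = d_{f'}^-(x)$.

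Next I would compute $TER(f')$ by cases on whether $v$ lies in $A$, in $B$, or in $V(D)\setminus(A\cup B)$. For $v \in A$, automatic saturation reduces $v \in TER(f')$ to $d_f^+(v_A)=0$, i.e.\ $v \in A \cap A_S$; symmetrically $v \in B$ contributes $B \cap B_S$. For $v \in V(D)\setminus(A\cup B)$, the identities $d_{f'}^+(v) = d_f^+(v_A) - f(v_A, v_B)$ and $d_{f'}^-(v) = d_f^-(v_B) - f(v_A, v_B)$ together with the capacity bound $d_f^-(v_B) \le w(v)$ force $v \in TER(f')$ to satisfy both $d_f^+(v_A)=0$ and $d_f^-(v_B)=w(v)$, equivalently $v \in A_S \cap B_S$. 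Assembling the three cases gives $TER(f')=S'$.

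The most subtle step is the wave condition $d_{f'}^+(v) = 0$ for $v \notin RF_\Delta(S')$. Suppose for contradiction that $d_{f'}^+(v) > 0$ while some $D$-path $v = u_0 u_1 \cdots u_k$ with $u_k \in B$ avoids $S'$. From $d_f^+(v_A) > 0$ the wave property of $f$ yields $v_A \in RF_\Gamma(S)$, while $v_A \in S$ is impossible, since $v_A \in A'$ would force $d_f^+(v_A)=0$. Hence every $A'$--$B'$ edge out of $v_A$ has its $B'$-endpoint in $S$; in particular $u_1 \in B_S$. I would then propagate this inductively along the path: whenever $u_i \in B_S$, $u_i \notin S'$, and $u_i \in V(D)\setminus(A\cup B)$, membership of $u_i$ in $B_S$ together with $u_i \notin A_S \cap B_S$ forces $u_{i,A} \notin S$, so separation of the $\Gamma$-edge $(u_{i,A}, u_{i+1,B})$ by $S$ forces $u_{i+1,B} \in S$, i.e.\ $u_{i+1} \in B_S$. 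The induction terminates at $i=k$ with $u_k \in B \cap B_S \subseteq S'$, contradicting $u_k \notin S'$. The main obstacle is orchestrating this last step: the key enabling observation, that $v_A \in S$ cannot coexist with $d_f^+(v_A) > 0$, is what both launches the argument at $v$ and keeps it advancing at each interior vertex $u_i$.
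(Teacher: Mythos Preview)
Your proof is correct and follows essentially the same route as the paper's: both establish the current inequality $d_{f'}^+(x) \le d_{f'}^-(x)$ via the observation that $S = TER(f)$ must meet the bipartite edge $(x_A,x_B)$ (the paper phrases this by contradiction, you by a direct case split), and both compute $TER(f')$ by the same case analysis on $A$, $B$, and $V(D)\setminus(A\cup B)$. Your argument is in fact more complete: the paper closes with ``Since $S'$ is an $A$--$B$-separator, $f'$ is a wave in $\Delta$,'' never explicitly verifying the second wave clause $d_{f'}^+(v)=0$ for $v\notin RF_\Delta(S')$, whereas you supply exactly this via the path-propagation argument in your final paragraph.
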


\begin{proof}
  Let us first prove that $f'$ is a current. To this end, we only have
  to show that $d^+_{f'}(v) \le d^-_{f'}(v)$ for every $v \in V\setminus
  A$. This is clearly true for $v\in B$, so we may assume $v \in V
  \setminus (A \cup B)$. By construction, we have $d^+_{f'}(v)=d^+_f(v_A)
  -f(v_A,v_B)$ and $d^-_{f'}(v)=d^-_f(v_B)-f(v_A,v_B)$. Hence we are
  done if $d^+_f(v_A) \le d^-_f(v_B)$. So let us assume that $d^+_f(v_A)
  >d^-_f(v_B)$. Since $d^+_f(v_A) \le w'(v_A)=w(v)=w'(v_B)$, we have
  $d^-_f(v_B)<w'(v_B)$ and hence $v_B \notin TER(f)$. Likewise, we have
  $d^+_f(v_A) > d^-_f(v_B) \ge 0$ and hence $v_A \notin TER(f)$. Since
  $v_A$ and $v_B$ are connected by an edge, this contradicts the fact
  that $f$ is a wave.

  Now let us prove $TER(f')=S'$. Clearly, we have $TER(f')\cap A =
  A\cap A_S$ and $TER(f')\cap B = B\cap B_S$. Thus, it remains to show
  that $TER(f') \setminus (A \cup B) = A_S \cap B_S$. Let $v \in
  A_S \cap B_S$. Since $v \in A_S$, we have $v_A \in SINK(f)$ and thus
  $v \in SINK(f')$. Finally, we have $v_B \in SAT(f)$, which yields
  $v \in SAT(f')$, since $f(v_A,v_B)=0$. Hence $TER(f') \setminus
  (A \cup B) \supseteq A_S \cap B_S$. Now let $v \in TER(f') \setminus
  (A \cup B)$. Then, $w(v) = d^-_{f'}(v) \le d^-_f(v_B) \le w'(v_B)
  = w(v)$, which means $v \in B_S$ and $d^-_f(v_B)=d^-_{f'}(v)$. The
  latter yields $f(v_A,v_B)=0$. Since $v\in SINK(f')$, we have
  $v \in A_S$.

  Therefore, $TER(f') = S'$. Since $S'$ is an $A$-$B$-separator, $f'$
  is a wave in $\Delta$.
\end{proof}

\begin{lemma}\label{hindrancetohindrance}
  If the zero wave in $\Gamma$ is a hindrance, then the zero wave in
  $\Delta$ is also a hindrance.
\end{lemma}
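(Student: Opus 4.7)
The plan is to unpack the hindrance condition on both sides and translate a witness from $\Gamma$ back to $\Delta$. Because $d^+_0\equiv 0$, the zero wave has $SINK(0)=V$, so $TER(0)=SAT(0)=A\cup\{x : w(x)=0\}$ in any weighted web; this contains $A$, hence is $A$-$B$ separating, so $0$ is indeed a wave in both $\Gamma$ and $\Delta$. Also, because $\Gamma$ is bipartite (with every edge going from $A'$ to $B'$), every $A'$-$B'$ path in $D'$ is a single edge, which makes the inessentiality condition in $\Gamma$ very concrete.

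Suppose the zero wave in $\Gamma$ is a hindrance, witnessed by some $v_A\in A'\setminus\ce(TER_\Gamma(0))$ with $w'(v_A)=w(v)>0$. The first key step is to argue that the underlying vertex $v$ actually lies in $A$ and not merely in $V(D)\setminus B$. For if $v\in V(D)\setminus(A\cup B)$, then by the construction of $D'$ the edge $(v_A,v_B)$ exists and forms a length-$1$ $A'$-$B'$ path out of $v_A$; inessentiality of $v_A$ forces this path to meet $TER_\Gamma(0)\setminus\{v_A\}$, and since $v_B\in B'$ lies outside $A'$ this forces $w'(v_B)=w(v)=0$, contradicting $w(v)>0$. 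Hence $v\in A$.

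The second key step is to show that $v$ witnesses hindrance of the zero wave in $\Delta$. Take any path $P$ from $v$ to $B$ in $D$ and let $(v,x_1)$ be its first edge. If $x_1\in A$, then $x_1\in (A\setminus\{v\})\subseteq TER_\Delta(0)\setminus\{v\}$ and $P$ is blocked at $x_1$. Otherwise $x_1\in V(D)\setminus A$, so $x_{1,B}$ exists and $(v_A,x_{1,B})$ is an edge of $D'$, again a length-$1$ $A'$-$B'$ path from $v_A$; exactly as in the first step, inessentiality of $v_A$ forces $w'(x_{1,B})=w(x_1)=0$, so $x_1\in TER_\Delta(0)\setminus\{v\}$ and $P$ is blocked at $x_1$. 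Either way $P$ is blocked, so $v\notin\ce(TER_\Delta(0))$. Since $v\in A$ and $w(v)>0$, the vertex $v$ hinders the zero wave in $\Delta$.

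The main obstacle I anticipate is bookkeeping with the edge translation: edges of $D$ pointing into $A$ have no counterpart in $D'$ (because $v_B$ is undefined for $v\in A$), so one cannot blindly pull back an arbitrary path of $\Delta$ to a path of $\Gamma$. The saving observation is precisely the case split above: when the first edge of $P$ enters $A$, the path is already blocked directly by a vertex of $A\setminus\{v\}$, so the absence of these edges in $D'$ causes no difficulty.
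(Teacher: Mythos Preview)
Your proof is correct and follows essentially the same route as the paper: show via the edge $(v_A,v_B)$ that the hindered vertex must come from $A$, then use inessentiality of $v_A$ in the bipartite web to force all out-neighbours of $v$ in $\Delta$ to have weight zero (or lie in $A$), whence $v$ hinders the zero wave in $\Delta$. Your treatment is in fact slightly more careful than the paper's, which asserts that every neighbour $u$ of $v$ satisfies $w(u)=0$ without explicitly handling the case $u\in A$ (where $u_B$ does not exist); your case split on $x_1\in A$ versus $x_1\notin A$ closes this gap cleanly.
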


\begin{proof}
  Suppose the zero wave $f_0$ in $\Gamma$ is a hindrance and let $v_A$
  be a hindered vertex, that is, $w'(v_A) > 0 = d^+_{f_0}(v_A)$ and $v_A
  \notin \ce(TER(f_0))$. In other words, every neighbour $u_B$ of $v_A$
  lies in $TER(f_0)$ and hence satisfies $w'(u_B)=0$. If $v_B$ existed,
  we would have $w'(v_B)=w'(v_A)>0$. Hence $v_B$ does not exist, which
  means that $v \in A$. Further, every neighbour $u$ of $v$ in $\Delta$
  satisfies $w(u)=w'(u_B)=0$, since $u_B$ is a neighbour of $v_A$ in
  $\Gamma$. Therefore, $v \in A\setminus\ce(TER(f'_0))$ and $w(v)=w'(v_A)
  >0$. Hence the zero wave $f'_0$ in $\Delta$ is a hindrance.
\end{proof}

Our next aim is to prove:

\begin{theorem}\label{halllike}
A countable loose bipartite weighted web is linkable.
\end{theorem}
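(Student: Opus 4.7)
The plan is to prove Theorem~\ref{halllike} by a countable inductive construction. Enumerate $A = \{a_1, a_2, \ldots\}$. I will build an increasing sequence of currents $f_0 \le f_1 \le f_2 \le \cdots$ in $\Gamma$ so that $f_n$ saturates $a_1, \ldots, a_n$ (that is, $d^+_{f_n}(a_i) = w(a_i)$ for $i \le n$), and so that the residual bipartite web $\Gamma_n$, obtained from $\Gamma$ by setting $w(a_i) := 0$ for $i \le n$ and replacing $w(b)$ by $w(b) - d^-_{f_n}(b)$ for $b \in B$, is still loose. With $f_0 \equiv 0$ and $\Gamma_0 = \Gamma$, the desired linkage is $f := \sup_n f_n$, which is a well-defined current bounded pointwise by the original weights, and which saturates every $a \in A$.

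The inductive step amounts to the following single-vertex saturation lemma: in any loose countable bipartite web $\Gamma'$ with distinguished vertex $a \in A$, there is a current $g$ in $\Gamma'$ with $d^+_g(a) = w(a)$ such that the web obtained by zeroing $w(a)$ and subtracting $d^-_g$ from the $B$-weights is still loose. To find $g$, I form an auxiliary network $\Delta'$ by adjoining to $\Gamma'$ a source $s^*$ joined to $a$ by a single edge of capacity $w(a)$ and a sink $t^*$ joined from each $b \in B$ by an edge of capacity $w(b)$, with the edges of $E(D)$ given capacities $\min(w(a'),w(b))$ (if necessary, first truncating so that the hypothesis $d^-_c(x)<\infty$ of Theorem~\ref{attained} holds, then undoing the truncation in a limit). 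Theorem~\ref{attained}, applied with source and sink reversed as indicated in the paper, supplies a flow $\phi$ attaining the supremum of flow values, with $d^-_\phi(x) \le |\phi|$ throughout. The bottleneck at $(s^*,a)$ forces $|\phi| \le w(a)$, and looseness of $\Gamma'$ forces equality: otherwise the saturated/blocked structure of $\phi$, namely the set of $B$-vertices whose $t^*$-edge is saturated together with the $A$-vertices cut off from $B$ via unsaturated residual edges, would be the terminal set of a wave in $\Gamma'$ in which $a$ is not essential (because $a$ has $w(a)>0$ but $\phi$ cannot send $w(a)$ worth of flow out of it), so this wave is either non-zero or exhibits the zero wave as a hindrance, contradicting looseness. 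Setting $g := \phi \upharpoonright E(D)$ yields the required current.

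It remains to verify that the updated web is loose. Given any hypothetical wave $h$ in it, combining $g$ with $h$ via a $\hetz$-style assembly governed by Lemma~\ref{hetzbipartite} (which controls how the $B$-terminal sets of waves merge in the bipartite setting) produces a wave in $\Gamma'$ whose terminal set has $a$ as a non-essential element; this yields a non-zero wave or a hindrance in $\Gamma'$, contradicting its looseness. The main obstacle is precisely this inductive step: specifically the twin tasks of (i) extracting $|\phi| = w(a)$ from Theorem~\ref{attained} via a max-flow/min-cut dichotomy that is only available because looseness rules out the ``deficient'' side, and (ii) transmitting looseness through the residual web by using Lemma~\ref{hetzbipartite} to track how $\hetz$-combinations of currents behave in the bipartite setting. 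Engineering the auxiliary network so that Theorem~\ref{attained} applies (via a truncation-and-limit step to enforce the required finiteness) is a further technical nuisance that must be handled carefully but should not affect the structural argument.
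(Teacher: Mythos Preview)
Your outer structure---enumerate $A$ and saturate one vertex at a time while maintaining looseness---is exactly the paper's reduction of \Tr{halllike} to \Lr{saturation}. The problem is your proposed proof of the single-vertex step.

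The claim that an arbitrary maximum flow $\phi$ from \Tr{attained} leaves a loose residual is false, and your verification of it cannot work because $g=\phi\upharpoonright E(D)$ is not a wave: the operation $\hetz$ and \Lr{hetzbipartite} apply only to combinations of \emph{waves}, and plain addition $g+h$ need not yield a wave in the original web. Concretely, take $A=\{a,a'\}$, $B=\{b_1,b_2,b_3\}$, edges $(a,b_1),(a,b_2),(a',b_2),(a',b_3)$, and weights $w(a)=w(a')=2$, $w(b_1)=w(b_3)=1$, $w(b_2)=3$. This web is loose (no current can saturate $b_2$ while leaving a separator), yet $g(a,b_2)=2$ is a legitimate maximum flow in your auxiliary network, and in the residual web (with $w'(a)=0$, $w'(b_2)=1$) the current $h(a',b_2)=h(a',b_3)=1$ is a non-zero wave with $TER(h)=\{a,b_2,b_3\}$. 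So the residual is not loose. Moreover $g+h$ fails to be a wave in the original web because the edge $(a,b_1)$ is uncovered: $a\notin SINK(g+h)$ and $b_1\notin SAT(g+h)$. Nothing in \Tr{attained} (including the bound $d^-_\phi(x)\le|\phi|$) distinguishes this bad $g$ from the good choice $g(a,b_1)=1,\,g(a,b_2)=1$.

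The paper's \Lr{saturation} avoids this by \emph{not} saturating $a$ in one shot. Instead it runs a transfinite recursion: at each step it finds (via \Lr{subtract}, which is where \Tr{attained} actually enters) a single $\varepsilon>0$ and a neighbour $y$ of $a$ such that decreasing $w(a)$ and $w(y)$ by $\varepsilon$ leaves the web merely unhindered; it then restores looseness by quotienting out a maximal wave. The delicate point---and the second essential use of \Tr{attained}, through \Lr{nottoobadlyhindered} and Corollary~\ref{bighindrance}---is showing that at limit stages no hindrance appears, using that the accumulated weight lost on $B$ is bounded by the weight lost at $a$. Your proposal skips this entire mechanism, and there is no evident way to repair it without reintroducing something equivalent to the $\varepsilon$-step recursion.
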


\Tr{halllike}  implies \Tr{main2}. Indeed, Lemmas~\ref{wavetowave} and~\ref{hindrancetohindrance} imply that if
$\Delta$ is loose then so is $\Gamma$. On the other hand, if $f$ is a linkage in $\Gamma$, then the function $f'$ defined above
satisfies $d^+_{f'}(v) = w'(v_A)-f(v_A,v_B) \ge d^-_f(v_B)-f(v_A,v_B)
= d^-_{f'}(v)$ for $v\in V(D)\setminus B$ and $d^+_{f'}(a)=w(a)$ for
$a\in A$. Thus, applying ideas similar to those in the proof of
\Lr{trimming}, we can easily use $f'$ to obtain a linkage of $\Delta$.

The rest of this section will be devoted to the proof of
\Tr{halllike}. Henceforth $\Gamma$ will denote a countable
bipartite weighted web with sides $A$ and
 $B$ and weight function $w$.


\begin{definition} \label{webminuscurrent}
If $f$ is a current in
$\Gamma$ we write $\Gamma-f$ for the weighted web $(D,A,B,w-d_f)$.
\end{definition}



\begin{lemma}\label{nottoobadlyhindered}
Let $\Gamma=(D,A,B,w)$ be a bipartite weighted web and 
let $u$ be a non-negative function on $B$ 
such that $\varepsilon := \sum_{v\in B} u(v)$ is finite. Let $w'$ be the weight function on $V$
defined by $w'\upharpoonright A= w \upharpoonright A$ and
$w'\upharpoonright B = (w \upharpoonright B) -u$. If $\Xi=(D,A,B,
w')$ is $(>\varepsilon)$-hindered then $\Gamma$ is hindered.
\end{lemma}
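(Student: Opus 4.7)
Let $f$ be a $(>\varepsilon)$-hindrance in $\Xi$ with witness $a$, so $a\in A\setminus\ce(TER_\Xi(f))$ and $w(a)-d^+_f(a)>\varepsilon$ (recall $w'=w$ on $A$). Write $U=TER_\Xi(f)$, $U_A=U\cap A$, $U_B=U\cap B$. My plan is to extend $f$ in $\Gamma$ to a wave $g\ge f$ with $U_B\subseteq TER_\Gamma(g)$ and total augmentation at most $\varepsilon$; then $g$ will witness the hindrance of $\Gamma$ at $a$.

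The key geometric observation is that because the web is bipartite and $a\notin\ce(U)$, every $B$-neighbour of $a$ lies in $U_B$: every $A$--$B$-path from $a$ consists of a single edge $(a,b)$, which must meet $U\setminus\{a\}$, forcing $b\in U_B$. This guarantees that once $U_B\subseteq TER_\Gamma(g)$, the vertex $a$ is automatically non-essential in $TER_\Gamma(g)$, regardless of whether $a$ itself lies there.

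For the main construction, view $f$ on $E(D)$; it is a current in $\Gamma$ since $w\ge w'$. Its failure to be a wave in $\Gamma$ is localised: exactly the vertices $b\in U_B$ with $u(b)>0$ are under-saturated in $\Gamma$, by $u(b)$, so the total shortfall is at most $\varepsilon$. I would construct a non-negative function $h$ on $E(D)$, supported on edges from $A\setminus U_A$ to $U_B$, with $d^-_h(b)=u(b)$ for each $b\in U_B$ and $d^+_h(a')\le w(a')-d^+_f(a')$ for each $a'\in A\setminus U_A$. Such an $h$ is a flow in the auxiliary countable network whose super-source $s$ feeds $A\setminus U_A$ with capacities $w(a')-d^+_f(a')$, whose middle edges are inherited from $\Gamma$ between $A\setminus U_A$ and $U_B$ (suitably capped to ensure finite in-degrees), and whose super-sink $t$ receives from $U_B$ with capacities $u(b)$; the attainment of the maximum flow then follows from (a suitable version of) \Tr{attained}. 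If the max flow equals the total demand $\sum_b u(b)$, then $g:=f+h$ is a wave in $\Gamma$ with $U\subseteq TER_\Gamma(g)$; since $|h|\le\varepsilon$, we have $d^+_g(a)\le d^+_f(a)+\varepsilon<w(a)$, and by the geometric observation $a\notin\ce(TER_\Gamma(g))$. Hence $g$ witnesses hindrance.

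The principal obstacle is the possibility that the auxiliary max flow is strictly less than $\sum_b u(b)$. A min-cut analysis would produce a Hall-deficient subset $\tilde B\subseteq U_B$ with $\tilde A:=N(\tilde B)\cap(A\setminus U_A)$ satisfying $\sum_{a'\in\tilde A}(w(a')-d^+_f(a'))<\sum_{b\in\tilde B}u(b)\le\varepsilon$. I would respond by replacing $U$ with the refined separator $\tilde U:=U_A\cup(U_B\setminus\tilde B)\cup\tilde A$ of $\Gamma$ (still $A$--$B$-separating, since every edge into $\tilde B$ has its $A$-endpoint in $U_A\cup\tilde A$), zeroing out the $f$-flow on edges leaving $\tilde A$, and iterating the augmentation on the smaller subsystem. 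A possibly transfinite iteration, whose convergence exploits the summability of deficits by $\varepsilon$ and the countability of $\Gamma$, should yield in the limit a wave $g^*$ in $\Gamma$ witnessing hindrance. The subtlest task is preserving $a$ (or a suitable substitute) as the hindrance witness throughout the iteration; the slack $w(a)-d^+_f(a)-\varepsilon>0$ at $a$, together with $N(a)\subseteq U_B$, suggests choosing the deficient sets $\tilde B$ to avoid $N(a)$ whenever possible.
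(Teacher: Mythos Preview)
Your plan—augment $f$ to saturate $U_B$ in $\Gamma$ while spending at most $\varepsilon$ extra, so that $a$ remains a witness—is correct in spirit, and your geometric observation $N(a)\subseteq U_B$ is the right fact. But the argument does not close in the obstacle case. Your auxiliary network only allows \emph{adding} flow on forward edges from $A\setminus U_A$ to $U_B$; it cannot reroute existing $f$-flow. When the maximum additive flow falls short of $\sum_{b} u(b)$, your proposed transfinite iteration is not a proof: after zeroing $f$ on edges out of $\tilde A$, vertices of $U_B\setminus\tilde B$ that previously received flow from $\tilde A$ incur an extra deficit of up to $\sum_{a'\in\tilde A}d^+_f(a')$, so the total shortfall can grow rather than shrink, and your ``summability of deficits by $\varepsilon$'' is not established. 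You also leave the preservation of the witness $a$ as an unresolved ``subtlest task'' with only a heuristic (``avoid $N(a)$ whenever possible''). As written, this is a genuine gap.

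The paper sidesteps the iteration by building its auxiliary network $\Psi$ as a \emph{residual} network: source $a$, a new sink $t$ joined to each $y\in B$ with capacity $u(y)$, the forward edges of $\Gamma$ with large capacity, and—this is the missing idea—the reverse edge $(y,x)$ with capacity $f(x,y)$ for every $\Gamma$-edge $(x,y)$. A maximum flow $j$ (via \Tr{attained}) satisfies $d^+_j(a)\le d^-_j(t)\le\varepsilon$; one then sets $g:=f\oplus j$ on the region reachable from $a$ in the $j$-residual of $\Psi$ and $g:=0$ elsewhere. If $TER_\Gamma(g)$ failed to separate, the offending edge would complete an augmenting path for $j$, contradicting maximality; hence $g$ is a wave in $\Gamma$, and $d^+_g(a)\le d^+_f(a)+\varepsilon<w(a)$ makes $a$ a hindrance witness in one stroke. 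The reverse edges are precisely what let the augmentation \emph{redistribute} $f$ rather than merely add to it, eliminating the Hall-deficiency obstacle that forced your iteration.
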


\begin{proof}
Let $f$ be a $(>\varepsilon)$-hindrance in $\Xi$, and let $a \in A
\setminus  \ce(TER(f))$ be a $(>\varepsilon)$-hindered vertex for $f$,
that is $w(a)-d_f(a)>\varepsilon$. We define a network $\Psi$, as
follows. The vertex set of $\Psi$ is $V(\Gamma) \cup \{t\}$, where
$t$ is a new vertex added (recovering, in fact, the sink vertex of
the network from which the web $\Gamma$ was obtained). The source
vertex of $\Psi$ is $a$, and its sink vertex is $t$. The edges of
$\Psi$ are all edges of $\Gamma$, taken each in both directions,
together with $\{(y,t) \mid y \in B\}$. Its capacity function is
defined by 
$c_\Psi(x,y)=\max(w(x),w(y))+1,
~c_\Psi(y,x)=f(x,y)$ for all $(x,y)\in E(\Gamma)$, and
$c_\Psi(y,t)=u(y)$ for all $y \in B$. By \Tr{attained}
(with the roles of the source and
the sink reversed) there exists in $\Psi$ a flow $j$ maximizing
the in-degree of $t$, and satisfying $d_j^+(a)\le d_j^-(t)\le
\varepsilon$. Note that for $x\in\ce(TER(f))\cap A$, we have
$c_\Psi(e)=0$ for each $e\in IN(x)$ and thus $d_j^-(x)=d_j^+(x)=0$.

Call a vertex $r \in V$ {\it reachable} (from  $a$) if there
exists a path $P$ from $a$ to $r$ in $\Psi$ such that
$c_\Psi(e)-j(e)>0$ for all $e \in E(P)$. 
Note that $c_\Psi(e)-j(e)>0$ for each $A$--$B$~edge $e$. Hence, if
a vertex in $A$ is reachable then so are all its neighbours in $B$.
Let $g$ be the flow defined by letting $g(e)=0$ if $e$ has at least one unreachable endpoint and $g(e)=(f\oplus j)(e)$ otherwise.
We shall show that $g$ is a wave in $\Gamma$. First note that $g$
is a current since $d_g(x)\le d_{f\oplus j}(x)=d_f(x)+j(x,t)\le
w(x)$ for every $x\in V(\Gamma)$. Suppose, for contradiction, that
$TER(g)$ is not $A$-$B$~separating, in which case there exists an edge
$(x,y)$ such that neither $x$ nor $y$ are in $TER(g)$.
Since $x \not \in TER(g)$ it is reachable and so is $y$; indeed,
if $x$ was unreachable, we would have $x\in SINK(g)$ by definition
of $g$, and hence $x \in TER(g)$. Thus, there exists a path $P$
from $a$ to $y$ such that $c_\Psi-j$ is positive on the edges of
$P$.

If $x\in\ce(TER(f))$, we have $d^-_j(x)=d^+_j(x)=0$. This yields
$d_g^+(x)=0$ and thus $x\in TER(g)$, a contradiction. Thus, since
$f$ is a wave, $y\in TER(f)$. Since $y\notin TER(g)$, it is
saturated by $f$ but not by $g$. This means that
$c_\Psi(y,t)-j(y,t)>0$.
Thus the flow $j$ in $\Psi$ can  be augmented along $P$, by
adding some small number $\zeta$ on all edges of $P$
and on $(y,t)$. 
This contradicts the maximality of $d^-_j(t)$.

Therefore, $g$ is a wave in $\Gamma$.
Since  $d_j^+(a)\le d_j^-(t)\le \varepsilon$, we have $d_g(a)
<w_\Gamma(a)$. Thus $a$ witnesses the fact that $g$ is a hindrance
in $\Gamma$, which proves the lemma.
\end{proof}


\Lr{nottoobadlyhindered} and Observation~\ref{deletingaisgoodforyou}
imply:

\begin{corollary}\label{bighindrance}
If $g$ is a current in $\Gamma$ with $\sum_{v\in B} g(v)=
\varepsilon$, and if $\Gamma -g$ is $(>\varepsilon)$-hindered,
then $\Gamma$ is hindered.
\end{corollary}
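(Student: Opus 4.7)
The plan is to reduce Corollary~\ref{bighindrance} to \Lr{nottoobadlyhindered} by choosing the auxiliary function $u$ on $B$ so that the web $\Xi$ produced by that lemma has the same $B$-weights as $\Gamma - g$, and then using Observation~\ref{deletingaisgoodforyou} to transfer a $(>\varepsilon)$-hindrance from $\Gamma - g$ to $\Xi$.

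Concretely, I would take $u$ to be the non-negative function on $B$ defined by $u(v) := w(v) - w_{\Gamma - g}(v)$, whose total weight on $B$ equals $\varepsilon$ by hypothesis. Let $\Xi = (D,A,B,w')$ be the web of \Lr{nottoobadlyhindered}, with $w' \upharpoonright A = w \upharpoonright A$ and $w' \upharpoonright B = (w - u) \upharpoonright B$. By construction $\Xi$ and $\Gamma - g$ agree on $B$, whereas on $A$ we have $w' \ge w_{\Gamma - g}$, since $\Xi$ leaves the $A$-weights of $\Gamma$ untouched while $\Gamma - g$ may have reduced them. Observation~\ref{deletingaisgoodforyou}, applied with $\Gamma := \Xi$ and $\Gamma' := \Gamma - g$, then yields that every wave in $\Gamma - g$ is a wave in $\Xi$. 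Moreover, saturation of a vertex depends only on the weight at that single vertex (and is automatic at every vertex of $A$), so the equality of the $B$-weights guarantees that $SAT(f)$, $TER(f)$ and hence $\ce(TER(f))$ are the same whether $f$ is viewed as a wave in $\Gamma - g$ or in $\Xi$.

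With this in place, suppose $f$ is a $(>\varepsilon)$-hindrance in $\Gamma - g$ witnessed by a vertex $a \in A \setminus \ce(TER(f))$, so $w_{\Gamma - g}(a) - d^+_f(a) > \varepsilon$. Since $w'(a) = w(a) \ge w_{\Gamma - g}(a)$, the same vertex $a$ also witnesses a $(>\varepsilon)$-hindrance of $f$ in $\Xi$. Hence $\Xi$ is $(>\varepsilon)$-hindered, and \Lr{nottoobadlyhindered} (applied to $\Gamma$ and $u$) gives that $\Gamma$ is hindered, completing the argument. The one point requiring care is ensuring that the combinatorial condition $a \notin \ce(TER(f))$ transfers between $\Gamma - g$ and $\Xi$; this is precisely why $u$ is chosen so that the two webs match on $B$ (keeping $TER(f)$ invariant) rather than on $A$, where any discrepancy of weights only helps the hindrance inequality.
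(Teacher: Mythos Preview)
Your proposal is correct and is precisely the argument the paper has in mind: the paper states the corollary immediately after the sentence ``\Lr{nottoobadlyhindered} and Observation~\ref{deletingaisgoodforyou} imply:'' without giving further details, and you have spelled out exactly how those two ingredients combine --- taking $u = d^-_g\upharpoonright B$ so that $\Xi$ and $\Gamma - g$ agree on $B$, then transporting the hindrance via Observation~\ref{deletingaisgoodforyou}. Your care in checking that $TER(f)$ and $\ce(TER(f))$ are unchanged (since the $B$-weights match and $A$-vertices are always saturated) is the only nontrivial verification, and you have handled it correctly.
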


If $\Gamma=(D,A,B,w)$ is a weighted web and $g$ a real function on the vertices of $\Gamma$ such that $g(v)\leq w(v)$ for every $v \in V(D)$, we write $\Gamma - g$ for the weighted web $(D,A,B,w-g)$.

\begin{lemma}
\label{subtract} Let $\Omega=(D,A,B,w)$ be a loose bipartite
weighted web, and let $b$ be an element of $B$ with $w(b)>0$. Then
there exists $\varepsilon>0$ such that $\Omega -\varepsilon
\chi(\{b\})$ is unhindered.
\end{lemma}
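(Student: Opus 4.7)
Suppose toward a contradiction that $\Omega_\varepsilon := \Omega - \varepsilon\chi(\{b\})$ is hindered for every $\varepsilon > 0$. The plan is to extract from this assumption a non-zero wave in $\Omega$, contradicting the looseness of $\Omega$.

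First, I would apply \Lr{nottoobadlyhindered} with the function $u = \varepsilon\chi(\{b\})$ on $B$: since $\sum_{v \in B} u(v) = \varepsilon$ and $\Omega$ is unhindered (being loose), $\Omega_\varepsilon$ contains no $(>\varepsilon)$-hindrance, so any hindrance $f$ of $\Omega_\varepsilon$ with hindered vertex $a$ satisfies $w(a) - d^+_f(a) \leq \varepsilon$. Moreover, for $\varepsilon < w(b)$ the zero function has the same $TER$-set in $\Omega$ and in $\Omega_\varepsilon$ (since $w(b) - \varepsilon > 0$), so any hindrance in $\Omega_\varepsilon$ is non-zero. Given such a hindrance $f$, set $T := TER_{\Omega_\varepsilon}(f)$ and view $f$ as a current in $\Omega$: since $d^-_f(b) \leq w(b) - \varepsilon < w(b)$, we have $b \notin TER_\Omega(f)$, so $TER_\Omega(f)$ equals $T$ if $b\notin T$ and $T\setminus\{b\}$ if $b\in T$. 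If either were separating in $\Omega$, then $f$ would already be a non-zero wave in $\Omega$, contradicting looseness. Hence $b \in T$ and $T\setminus\{b\}$ fails to separate in $\Omega$, which in a bipartite web is possible only if some neighbour $a'\in A$ of $b$ lies outside $T$.

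To finish, I would take a sequence $\varepsilon_n \downarrow 0$ with $\varepsilon_n < w(b)$, pick hindrances $f_n$ in $\Omega_{\varepsilon_n}$, and use the countability of $E(D)$ to extract via a diagonal argument a subsequence along which $f_n(e)\to f(e)$ for every edge $e$. The limit $f$ is non-negative and satisfies the web's capacity constraints by Fatou, so it is a current in $\Omega$. I then aim to show $f$ is a non-zero wave in $\Omega$: non-zero because each $f_n$ saturates $b$ up to slack $\varepsilon_n \to 0$, and a wave because for each edge $(a,b')$, writing $T_n := TER_{\Omega_{\varepsilon_n}}(f_n)$, either $a \in T_n$ for infinitely many $n$ (so that $d^+_f(a)=0$ and $a \in TER_\Omega(f)$) or $b' \in T_n$ for all large $n$ (so that, after a careful refinement of the subsequence, $d^-_f(b')=w(b')$ and $b' \in TER_\Omega(f)$). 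Such an $f$ would contradict the looseness of $\Omega$.

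The main obstacle is precisely this limiting argument when vertices have infinite in-degree: by Fatou's inequality the identity $\lim_n d^-_{f_n}(v) = d^-_f(v)$ can fail, so the saturating mass flowing into $b$ (and similarly into any other $b'$ that should land in $TER_\Omega(f)$) could escape to infinity in the limit. I would handle this by refining the diagonal extraction, enumerating the countably many vertices and, at each stage, ensuring that the relevant in-degree sum converges to its target value; alternatively, one may first replace each $f_n$ by its trimming via \Lr{trimming} to obtain a more canonical hindrance whose support is easier to control under pointwise limits.
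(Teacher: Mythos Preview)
Your overall strategy---produce a non-zero wave in $\Omega$ from hindrances in $\Omega_{\varepsilon_n}$---matches the paper's, and your opening observations (that any hindrance in $\Omega_\varepsilon$ must have $b\in TER$, else it is already a hindrance in $\Omega$; and that by \Lr{nottoobadlyhindered} the ``slack'' at the hindered vertex is at most $\varepsilon$) are correct. The problem is the limiting step, which you yourself flag.

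The Fatou obstacle is not a technicality that a more careful diagonal extraction can remove. To show that the pointwise limit $f$ is a wave you need, for every edge $(a,b')$ with $a\notin TER_\Omega(f)$, that $b'\in SAT_\Omega(f)$, i.e.\ $\sum_{x}f(x,b')=w(b')$. You know $\sum_{x}f_n(x,b')=w(b')$ for cofinitely many $n$, but an infinite sum of pointwise limits need only satisfy $\sum_x \lim_n f_n(x,b')\le \liminf_n \sum_x f_n(x,b')$. No choice of subsequence can in general force equality: if, say, $f_n$ concentrates the mass entering $b'$ on its $n$-th neighbour, then \emph{every} subsequential pointwise limit vanishes at $b'$. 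Your proposed refinement (``ensure the relevant in-degree sum converges to its target'') amounts to asking for tightness of the sequences $(f_n(\cdot,b'))_n$ simultaneously for all $b'$, and nothing in the setup provides it. Trimming does not help either: in a bipartite web trimming only forces $d^+=0$ at non-essential $A$-vertices, and has no effect on the in-degrees of $B$-vertices, which is exactly where the mass leaks.

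The paper avoids pointwise limits altogether. It first turns the hindrances $g_n$ into waves in the \emph{fixed} web $\Omega-\chi(\{b\})$, combines them via the $\hetz$ operation into a monotone increasing sequence, and takes the supremum $g_\omega$ (\Lr{supwave}); monotone convergence, unlike Fatou, preserves the relevant sums. This pins down a common arena $T=TER(g_\omega)\cap B$ in which all the $h_n:=g_n\hetz g_\omega$ live. The remaining difficulty---pushing the missing mass $1$ into $b$---is handled not by a compactness argument but by invoking \Tr{attained} on the residual flows $h_n-h_1$, which is precisely the attainability result designed to prevent flow from escaping to infinity. Your argument lacks an analogue of this step.
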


Recall that  $\Omega -\varepsilon \chi(\{b\})$ is obtained from
$\Omega$ by reducing the weight $w$ on $b$  by $\varepsilon$.

\begin{proof}
Without loss of generality we may assume that $w(b)\ge 1$. This
means that $\Omega-{\frac{1}{n}}\chi(\{b\})$ is defined for all
positive integers $n$. Suppose, for contradiction, that
$\Omega-{\frac{1}{n}}\chi(\{b\})$ contains a hindrance $g_n$ for
every $n=1,2,3...$. Clearly, $b \in TER(g_n)$, since otherwise $g_n$ would
be a hindrance in $\Omega$. We define a wave $g_\omega$ in
$\Omega-\chi(\{b\})$ as follows. First, for every $i$, let $\tilde
g_i$ be a wave in $\Omega-\chi(\{b\})$ obtained from $g_i$ by
reducing its value on some edges at $b$ so that $d^-_{\tilde g_i}(b)
=w(b)-1$. Then, let $f_n=g_1\hetz\tilde g_2 \hetz \ldots \hetz\tilde g_n$ and let $g_\omega=\sup f_n$. By \Lr{supwave}, $g_\omega$ is a wave in
$\Omega-\chi(\{b\})$.

Now let $h_n=g_n\hetz g_\omega$. It is easy to check that $h_n$ is
a wave in $\Omega-{\frac{1}{n}}\chi(\{b\})$, even though $g_\omega$
is not: $g_\omega \upharpoonright ((\Omega-{\frac{1}{n}}\chi(\{b\}))/g_n)$ is a wave in
$(\Omega-{\frac{1}{n}}\chi(\{b\}))/g_n$ and hence $h_n$ is a wave, by \Lr{hetziswave}.
Let $T=TER(g_\omega)\cap B$ and let $S=A\setminus RF(T)$. Then, by
\Lr{hetzbipartite}, $T\supset TER(g_n)\cap B$ for all $n$,
and hence $T=TER(h_n)\cap B$ for all $n$. The waves $h_n$ all play
in the same arena - the web induced on $(A \setminus S) \times T$.

Similarly with \Lr{nottoobadlyhindered}, we can define a
network $\Psi$ with sink $b$ and source $s$, where $s$ is a new
vertex added, joined to all vertices in $A\setminus S$. In $\Psi$,
we can apply \Tr{attained} to the flows $h_2-h_1,h_3-h_1,
\ldots$, to deduce that there exists a current $k$ in $\Psi$ of
value $1$. Then, $h_1\oplus k$ is a current in $\Omega$ saturating
all vertices in $T$, and is thus a non-zero wave in $\Omega$,
contradicting the fact that $\Omega$ is loose.
\end{proof}

We shall use \Lr{subtract} for our next lemma:

\begin{lemma}
\label{saturation} Let $\Omega=(D,A,B,w)$ be a loose bipartite weighted web,
and let $a$ be any element of $A$. Then, there exists a
current $f$ such that $d_f(a)=w(a)$ and $\Omega-f$ is loose.
\end{lemma}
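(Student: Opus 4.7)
The plan is to construct $f$ via a monotone transfinite iteration, building a chain of currents $(f_\xi)$ such that $\Omega - f_\xi$ is loose and $d^+_{f_\xi}(a)$ strictly grows (bounded by $w(a)$) until it reaches $w(a)$. In the bipartite web, the degree $d_{f_\xi}(a)$ equals $d^+_{f_\xi}(a) = \sum_{b \in N(a) \cap B} f_\xi(a,b)$, so saturating $a$ amounts to distributing $w(a)$ units of flow over $a$'s $B$-neighbours. We also allow flow elsewhere in the web to maintain looseness during the iteration.

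For the successor step, suppose $f_\xi$ is given with $\Omega - f_\xi$ loose and $d^+_{f_\xi}(a) < w(a)$. Let $w'$ denote the weight in $\Omega - f_\xi$. Since $w'(a)>0$ and the zero wave is not a hindrance in $\Omega - f_\xi$, the vertex $a$ is essential in $TER(0)$; in the bipartite setting this forces some neighbour $b \in B$ of $a$ with $w'(b)>0$. Applying \Lr{subtract} to $\Omega - f_\xi$ and $b$ yields $\varepsilon > 0$ such that $(\Omega - f_\xi) - \varepsilon\chi(\{b\})$ is unhindered. With $\eta = \min(\varepsilon, w'(a), w'(b))$, I set $f_{\xi+1}(a,b) := f_\xi(a,b) + \eta$ and $f_{\xi+1}(e) := f_\xi(e)$ on all other edges. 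Then $\Omega - f_{\xi+1} = (\Omega - f_\xi) - \eta\chi(\{b\}) - \eta\chi(\{a\})$, and Observation~\ref{deletingaisgoodforyou} ensures that unhinderedness is preserved under the additional reduction at $a \in A$.

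The principal obstacle is strengthening this from unhinderedness to full looseness, since \Lr{subtract} a priori allows non-zero waves to appear when $w(b)$ is reduced. Any new non-zero wave $g$ in $\Omega - f_{\xi+1}$ must satisfy $b \in \ce(TER(g))$; otherwise $\ce(TER(g)) \setminus \{b\}$ would still separate in $\Omega - f_\xi$, producing a non-zero wave there and contradicting looseness. To handle such $g$, I absorb its flow into an enlarged current via the $\hetz$ operation (Lemmas~\ref{hetziswave} and~\ref{hetzbipartite}) followed by a trimming step (\Lr{trimming}), replacing $f_{\xi+1}$ by a larger current $f_{\xi+1}'$ that incorporates $g$'s flow into $b$ and possibly elsewhere. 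This either restores looseness outright or produces strictly more saturation at $a$, so in either case the iteration makes genuine progress.

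At a limit ordinal $\lambda$, set $f_\lambda := \sup_{\xi<\lambda} f_\xi$ pointwise; this is a current because capacity constraints pass to monotone limits via $d^\pm_{f_\lambda}(v) = \sup_\xi d^\pm_{f_\xi}(v) \le w(v)$. Since every $A$-$B$-path in the bipartite web is a single edge, the residual weights at its two endpoints depend on just one value $f_\xi(a,b)$, which stabilizes along the chain, and looseness of $\Omega - f_\lambda$ follows by the stabilization-on-finite-paths argument used in \Lr{supwave}. For termination, the transfinite sequence $d^+_{f_\xi}(a)$ is bounded and monotone, so it stabilizes at some countable ordinal $\xi^*$; if the stabilized value were strictly below $w(a)$, the successor step would apply once more, contradicting stabilization. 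Hence $d^+_{f_{\xi^*}}(a) = w(a)$, and $f := f_{\xi^*}$ is the desired current.
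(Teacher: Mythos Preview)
Your overall architecture---transfinite iteration with a successor step based on \Lr{subtract} plus absorption of a maximal wave---matches the paper's. The successor step is essentially right, although your invocation of the $\hetz$ operation is misplaced (that operation combines two waves, whereas here you simply want to add a maximal wave $k$ of $\Omega - f_{\xi+1}$ to the current $f_{\xi+1}$; then $\Omega - (f_{\xi+1}+k)$ is loose by maximality of $k$, exactly as in the paper).

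The genuine gap is your limit step. You claim that looseness of $\Omega - f_\lambda$ ``follows by the stabilization-on-finite-paths argument used in \Lr{supwave}''. This is incorrect on two counts. First, the residual weight at a vertex $b\in B$ is $w(b)-\sum_{a'\in N(b)} f_\xi(a',b)$, which depends on \emph{all} edges at $b$, not a single one, so there is no edge-wise stabilization of residual weights. Second, and more fundamentally, \Lr{supwave} establishes that a supremum of waves is a wave; looseness is the \emph{absence} of non-trivial waves and hindrances, an entirely different (and not closed) property. Hindrances genuinely can appear at limit ordinals: imagine the currents $f_\xi$ gradually saturating, one after another, the countably many $B$-neighbours $b_1,b_2,\dotsc$ of some fixed $a'\in A$ with $w(a')>0$; each stage remains loose, but at the limit every neighbour of $a'$ has residual weight $0$, so the zero wave is a hindrance.

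The paper handles exactly this: it proves (as a separate Assertion) that $\Omega - f_\lambda$ is \emph{unhindered} at every limit $\lambda$, and the proof is not soft. One picks a putative hindrance $h$ in $\Omega - f_\lambda$ with gap $\delta>0$, chooses $\nu<\lambda$ so that the tail $\sum_{\nu<\theta<\lambda}\varepsilon_\theta<\delta$, and then observes that $h$ together with the tail waves $k_\theta$ form a $(\ge\delta)$-hindrance in $\Omega - f_\nu$ perturbed at $B$ by total at most $\sum_{\nu<\theta<\lambda}\varepsilon_\theta<\delta$. Corollary~\ref{bighindrance} (hence \Lr{nottoobadlyhindered}, hence \Tr{attained}) then forces $\Omega - f_\nu$ itself to be hindered, a contradiction. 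This quantitative step---controlling the size of the hindrance against the tail of the $\varepsilon_\theta$---is the heart of the lemma, and your proposal omits it entirely.
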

\begin{proof}
We may assume that $w(a)>0$ since otherwise we could choose $f\equiv 0$.
We choose recursively vertices $y_\theta \in B$,  flows $f_\theta$
and networks $\Omega_\theta$, for countable ordinals $\theta$, as
follows. Since $w(a)
=0$ and  since $\Omega$ is unhindered, there exists an edge
$(a,y_0) \in OUT(a)$, such that $w(y_0)>0$. By \Lr{subtract}  and
Observation~\ref{deletingaisgoodforyou} we can find $\varepsilon_0>0$ such
that $\Omega-{\varepsilon_0}\chi(\{a,y_0\})$ is unhindered. Let $k_0$
be a maximal wave in $\Omega-{\varepsilon_0}\chi(\{a,y_0\})$. Define
$f_0={\varepsilon_0}\chi(\{(a,y_0)\})+k_0$.
Since $k_0$ is maximal, $\Omega-f_0$ is loose; for if $\Omega-f_0$
contained a wave $g$ which is non-zero or a hindrance, $k_0+g$ would
be a wave in $\Omega-f_0$, contradicting either the maximality of $k_0$
or the fact that $\Omega-f_0$ is unhindered. Let
$\Omega_1=\Omega-f_0$. If $w_{\Omega_1}(a)$, i.e.\ the capacity of $a$ in $\Omega_1$, is greater than $0$, then
there exists $(a,y_1)\in OUT(a)$ with $w_{\Omega_1}(y_1)>0$. Thus we can
find $\varepsilon_1>0$ such that
$\Omega_1-{\varepsilon_1}\chi(\{a,y_1\})$ is unhindered. Taking a
maximal flow $k_1$ in $\Omega_1-{\varepsilon_1}\chi(\{a,y_1\})$ and
defining $f_1={\varepsilon_1}\chi(\{(a,y_1)\})+k_1$, the weighted web
$\Omega_1-f_1=\Omega-f_0-f_1$ is then loose.

We continue this way transfinitely until either the capacity of $a$ has been reduced to $0$ or we have obtained a hindered weighted web. For each ordinal $\alpha$ write
$f_\alpha=\sum_{\theta<\alpha}f_\theta$. For successor ordinals,
the currents $f_\alpha$ and the weighted webs $\Omega_\alpha$ are
defined as exemplified above. For limit ordinals $\alpha$ define
$\Omega_\alpha=\Omega-f_\alpha$. We wish to show that
$\Omega_\alpha$ is unhindered for every $\alpha$.  By the
construction, this is automatically true for successor ordinals
$\alpha$. Thus we only have to show:

\begin{assertion}
$\Omega_{\alpha}$ is unhindered for all limit countable ordinals
$\alpha$.
\end{assertion}

\begin{proof}
The proof is by induction on $\alpha$. Let $\alpha$ be a limit
ordinal, and assume that $\Omega - f_\nu$ is unhindered for all
limit ordinals $\nu < \alpha$. Clearly, hindrances cannot appear
at non-limit ordinals, and thus we may assume that $\Omega -f_\nu$
is unhindered for all $\nu < \alpha$. Assume, for contradiction,
that there exists a hindrance $h$ in $\Omega_\alpha$. Let $z \in
A$ be a hindered vertex and let $\delta =
w_{\Omega_\alpha}(z)-d_{h}(z)$. Since $\sum_{\theta < \alpha}
d_{f_\theta}(a)$ is bounded (by $w(a)$ for instance), there is
some ordinal $\nu$ such that
$\sum_{\nu<\theta<\alpha}d_{f_\theta}(a) < \delta$. In particular,
$\sum_{\nu<\theta<\alpha}\varepsilon_\theta < \delta$. Since
$f_\alpha=f_\nu+\sum_{\nu<\theta<\alpha}{\varepsilon_\theta}\chi\{(a,y_\theta)\}
+ \sum_{\nu < \theta<\alpha}k_\theta$, the current  $\sum_{\nu <
\theta<\alpha}k_\theta+h$ is a $(\ge\delta)$-hindrance in
$\Omega-f_\nu
-\sum_{\nu<\theta<\alpha}{\varepsilon_\theta}\chi\{(a,y_\theta)\}$.
But since $\sum_{\nu<\theta<\alpha}\varepsilon_\theta < \delta$,
this contradicts the fact that $\Omega -f_\nu$ is unhindered by
Corollary~\ref{bighindrance}. This proves the assertion.
\end{proof}

Since $w_{\Omega_{\theta+1}}(a)<w_{\Omega_\theta}(a)$ for every
$\theta$, the process must stop at some countable ordinal
$\alpha$. But this can only happen when $w_{\Omega_{\alpha}}(a)=0$.
Taking $f=f_\alpha$ for $\alpha$ satisfying this condition yields
the lemma. \end{proof}

Applying this lemma recursively, we can now achieve our aim:

\begin{proof}[Proof of \Tr{halllike}]
Enumerate the vertices in $A$ as $a_1,a_2,\ldots$. Applying \Lr{saturation} to $\Delta$ with $a=a_1$ we get
a current $f_1$ in $\Delta$ saturating $a_1$, and having the
property that $\Delta -f_1$ is loose. Using the same lemma
again, we get a current $f_2$ in $\Delta-f_1$ saturating $a_2$
in this weighted web, and such that $\Delta-f_1-f_2$ is loose.
Continuing this way, we find a sequence $f_i$ of currents, where
$f_i$ saturates $a_i$ in $\Delta-\sum_{j<i}f_j$. The current $\sum
f_i$ is then the desired linkage of $\Delta$.
\end{proof}

As already mentioned, \Tr{halllike} implies \Tr{main2}, which in turn implies \Tr{main}.

\section{Mundane flows and attainability}
\label{sec:mundane}

As mentioned in the introduction, \Tr{main} does not generalise \Tr{inf-menger}, since the flow is allowed to contain infinite paths. One could try to generalise \Tr{inf-menger} by only considering flows that do not contain infinite paths:

\begin{definition}
A flow $f$ is {\em mundane} if (seen as a vector in $\R_+^E$) it
can be written as $f=\sum_{i \in I}\theta_i \chi(E(P_i))$, where
$\theta_i$ is a positive real number and $P_i$ is an $s$-$t$ path.
\end{definition}

\begin{problem}\label{orthmundane}
 Does there exist an orthogonal pair of a cut and a mundane flow for every infinite network?
\end{problem}

The results proved so far answer this question for certain networks. A {\it trail} in a network is a directed walk in which no edge appears more than once.

\begin{corollary}\label{mainmundane}
  In every countable network $\Delta=(D,s,t,c)$ that contains no infinite trail, there is an orthogonal pair of a cut and a mundane flow.
\end{corollary}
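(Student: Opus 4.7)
My plan is to deduce the corollary from \Tr{main} by post-processing the orthogonal flow into a mundane one. Apply \Tr{main} to obtain an orthogonal pair $(f,K)$, where $K=E(S,V\setminus S)$ is an $s$--$t$~cut saturated by $f$, and $f$ vanishes on the reverse cut $E(V\setminus S,S)$. The objective is to replace $f$ with a mundane $f'\leq f$ that agrees with $f$ on the cycle-free part of $supp(f)$, which will automatically inherit orthogonality to~$K$.

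The key structural observation is that no directed cycle in $supp(f)$ can traverse an edge of $K$: such a cycle would have to re-cross the cut, so it would include an edge of $E(V\setminus S,S)$; but $f$ is identically zero on those edges, contradicting that every edge of the cycle lies in $supp(f)$. Hence every edge of $K$ is ``cycle-free'' in $supp(f)$, so any sub-flow $f'\leq f$ that preserves the values of $f$ on cycle-free edges automatically saturates $K$ and vanishes on the reverse cut.

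The remaining task is a decomposition lemma: every flow $g$ in a countable network with no infinite trail admits a mundane sub-flow $g'\leq g$ agreeing with $g$ on every edge that lies on no directed cycle of $supp(g)$. I plan a two-stage transfinite construction. Stage one iteratively peels off cycles: at each successor ordinal $\alpha+1$, whenever $supp(g_\alpha)$ still contains a directed cycle $C$, set $g_{\alpha+1}=g_\alpha-\min_{e\in E(C)}g_\alpha(e)\cdot\chi(E(C))$, permanently draining at least one edge; at limit ordinals pass to the pointwise infimum. Countability of $E$ forces termination at a countable ordinal, producing an acyclic residual $g_1\leq g$ that coincides with $g$ on every originally cycle-free edge (in particular on $K$). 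Stage two decomposes $g_1$ into finite $s$--$t$~paths: since $supp(g_1)$ is acyclic and Kirchhoff's law holds at every internal vertex, every forward walk from $s$ extends without revisiting vertices, and the no-infinite-trail hypothesis forces it to reach $t$ after finitely many edges. Iteratively extracting such paths with weights $\theta_i=\min_{e\in E(P_i)}g_1(e)$ drains at least one edge per step, and any residual after the countable termination has acyclic support, zero value, and satisfies Kirchhoff everywhere, hence is identically zero. This yields $g_1=\sum_i \theta_i\chi(E(P_i))$, certifying mundanity; setting $f':=g_1$ (with $g=f$) finishes the proof.

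The main obstacle I anticipate is preserving Kirchhoff's law at the limit ordinals of stage one when some vertex has $d^-_g(v)=\infty$, since the pointwise infimum combined with the convention $\infty-\infty=0$ can leave the in- and out-degrees at a vertex at unequal finite values after the limit. To handle this, I would first invoke \Tr{attained}, applied iteratively in residual networks of finite-capacity truncations of $\Delta$, to replace $f$ with an orthogonal flow whose vertex in-degrees are all finite; under this reduction, monotone convergence guarantees that both transfinite stages behave correctly, and the construction goes through to produce the required mundane orthogonal pair $(f',K)$.
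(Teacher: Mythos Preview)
Your plan shares the paper's key insight---that no directed cycle of $supp(f)$ can meet the orthogonal cut $K$, so a sub-flow agreeing with $f$ on cycle-free edges inherits orthogonality---but the execution is genuinely different. The paper does not stay in $\Delta$: it passes to the associated weighted web $\Gamma$ of \Sr{sec:vx}, obtains an orthogonal pair of a web-flow and a separator there, and greedily peels off $A$--$B$ paths from the web-flow in a single transfinite sweep. It then argues that any unsaturated separator vertex would force a cycle in $supp(f)$ through the separator (your observation, transplanted to $\Gamma$), which contradicts orthogonality. You instead run two transfinite stages directly in $\Delta$: kill cycles first, then decompose the acyclic remainder into $s$--$t$ paths.

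The real dividend of the paper's detour through $\Gamma$ is exactly the obstacle you anticipate. In the web every vertex is an edge $e$ of $\Delta$, and the current axioms force $d^-(e)\le w(e)=c(e)<\infty$; hence all in-degrees are finite and dominated convergence carries Kirchhoff through every limit ordinal for free. Your direct approach must confront vertices with $d^-_f(v)=\infty$, which is compatible with the no-infinite-trail hypothesis (e.g.\ infinitely many internally disjoint $s$--$v$ and $v$--$t$ paths of length two), and there the pointwise infimum at a limit stage need not obey Kirchhoff. Your proposed repair via \Tr{attained} is not convincing as written: that theorem assumes $d^-_c(x)<\infty$ and outputs a maximum-value flow, not one orthogonal to a prescribed cut, so it is unclear how iterating it on truncations would produce an \emph{orthogonal} flow with finite in-degrees. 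The cleanest fix is to adopt the paper's move and transfer to the web before extracting; alternatively you would need a separate lemma guaranteeing that, under the no-infinite-trail hypothesis, the orthogonal flow from \Tr{main} can be replaced by one with finite in-degrees while remaining orthogonal to $K$.
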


\begin{proof}
  By the transformation of \Sr{sec:vx}, $\Delta$ yields a weighted web $\Gamma=(D',A,B,w)$. Recall that $V(D')=E(D)$. By \Tr{main2} and \Lr{looseimplieslinkableimpliesmain} there is an orthogonal pair of a separating set $S$ and a web-flow $f$ in $\Gamma$. We may assume $S$ to be essential. We claim that there is a {\it mundane web-flow} $f'\le f$ that is also orthogonal to $S$, where mundane web-flows are defined analogously to mundane flows.

  Since $\Delta$ contains no infinite trails, there are no infinite paths in $\Gamma$; we will use this fact to construct $f'$. Inductively for countable ordinals $i$ we will choose $A$--$B$~paths $P_i$ and positive real numbers $\theta_i$ so that the function $f_i:=\sum_{j\le i}\theta_j\chi(E(P_j))$ is a mundane web-flow with $f_i(e)\le f(e)$ for each edge $e$. Let $i$ be a countable ordinal and assume that $P_j$ and $\theta_j$ have been defined for all $j<i$. Then, since each $f_j$ satisfies $f_j \le f$ by assumption, the function $f_{<i}:=\sum_{j<i}\theta_j\chi(E(P_j))$ is a mundane web-flow with $f_{<i} \le f$. If $f_{<i}(e)=f(e)$ for every $e\in E(A,V(D')\setminus A)$, we terminate the construction and put $f':=f_{<i}$. Otherwise, since $\Gamma$ contains no infinite paths, the support of the web-flow $f-f_{<i}$ contains an $A$--$B$~path $P_i$; let $\theta_i:=\min\{f(e)-f_{<i}(e) \mid e\in E(P_i)\}$. Clearly, $f_i$ is a mundane web-flow with $f_i\le f$. Since $supp(f_i)\subsetneq supp(f_{<i})$ and $\Gamma$ is countable, the construction terminates after countably many steps.

  We thus have a mundane web-flow $f'\le f$ that coincides with $f$ on $E(A,V(D')\setminus A)$. We have to show that $f'$ is orthogonal to $S$. Since $f'\le f$ and $f$ is orthogonal to $S$, it suffices to show that $S\subset SAT(f')$. If $d^-_{f'}(s)<w(s)$ for a vertex $s\in S\setminus A$, then $d^-_{f-f'}(s)>0$. Since $f-f'$ is a web-flow, no vertex in the digraph $\tilde D=(V(D'),supp(f-f'))$ that does not lie in $A\cup B$ has degree $1$. Hence $s$ lies on an $A$--$B$~path in $\tilde D$, or on an infinite path, or on a cycle. By the choice of $f'$, there are no $A$--$B$~paths in $\tilde D$, and $\tilde D$ does not contain infinite paths since $D'$ does not. So $s$ lies on a cycle $C$ in $\tilde D$, which is clearly also a cycle in $supp(f)$. Since $S$ is essential we have $s\in RF(S)\setminus RF^\circ(S)$, and hence $C$ contains an edge $e$ from $V(D')\setminus RF^\circ(S)$ to $RF(S)$. But then $e\in supp(f)$ and thus $f(e)>0$, contradicting the fact that $f$ is orthogonal to $S$.

  We have shown that there is an orthogonal pair of a separating set $S$ and a mundane web-flow $f'=\sum_{i\in I}\theta_i\chi(E(P_i))$ in $\Gamma$. This pair can easily be translated into an orthogonal pair of a cut $F$ and a mundane flow $g$ in $\Delta$: The vertex set $S$ in $D'$ is an edge set in $D$ and it is $s$--$t$~separating in $D$ since it is $A$--$B$~separating in $D'$; hence it contains a cut $F$ in $D$. Every $A$--$B$~path $P_i$ in $D'$ corresponds to an $s$--$t$~trail $P'_i$ in $D$; let $g'$ be the function on $E(D)$ defined by $g':=\sum_{i\in I}\theta_i\chi(E(P'_i))$. It is easy to see that $g'$ is a flow in $\Delta$ orthogonal to $S$ and hence also to $F$. Therefore, each $P'_i$ meets $F$ in precisely one edge. Every $P'_i$ contains an $s$--$t$~path $Q_i$; let $g:=\sum_{i\in I}\theta_i\chi(E(Q_i))$. Then $Q_i$ meets $F$ at the same edge as $P'_i$ does, and hence $g$ is a mundane flow in $\Delta$ orthogonal to the cut $F$.
\end{proof}

In the remainder of this section we show that the infimum $\sigma$ of the
capacities of the $s$--$t$~cuts in a network equals the supremum $\tau_m$ of the values of the mundane flows. Moreover, we show that $\sigma$ is attained by some cut but $\tau_m$ need not be attained by any mundane flow.

\begin{definition}
  Given a countable network $\Delta=(D,c,s,t)$, let
  \begin{align*}
    \sigma&:=\inf\{c[F] : F\text{ is an $s$--$t$~cut}\},\text{ and}\\
    \tau_m&:=\sup\{|f| : f\text{ is a mundane flow in }\Delta\}.
  \end{align*}
\end{definition}

\begin{theorem} \label{sigtau}
  Let $\Delta=(D,c,s,t)$ be a countable network. The following statements hold:
  \begin{enumerate}
  \item
    $\Delta$ has an $s$--$t$~cut $F$ of minimal capacity $\sigma$, and
  \item
    $\tau_m=\sigma$.
  \end{enumerate}
\end{theorem}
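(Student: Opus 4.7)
The inequality $\tau_m\le\sigma$ follows by standard path-counting: if $f=\sum_{i\in I}\theta_i\chi(E(P_i))$ is a mundane flow and $F$ is any $s$--$t$~cut, then each $P_i$ uses at least one forward edge of $F$, so $|f|=\sum_i\theta_i\le\sum_{e\in F}\sum_{i:\,e\in P_i}\theta_i=f[F]\le c[F]$; taking the infimum over $F$ gives $\tau_m\le\sigma$. The remaining content of the theorem is two attainment/approximation statements, and both will be handled by one compactness idea: $\{0,1\}^V$ is sequentially compact (indeed metrizable) since $V$ is countable, and cut capacity is lower semicontinuous under pointwise limits of indicator functions because $c\ge 0$.

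For part (i), I plan to choose cuts $F_n=E(S_n,V\setminus S_n)$ with $c[F_n]\to\sigma$, and by a diagonal argument over an enumeration of $V$ pass to a subsequence along which the indicator functions of the $S_n$ converge pointwise to the indicator function of some $S\subseteq V$; note that $s\in S$ and $t\notin S$ since these hold for every $S_n$. For any finite $E'\subseteq E(S,V\setminus S)$, the finitely many endpoint-membership conditions forcing $E'\subseteq E(S_n,V\setminus S_n)$ are eventually satisfied, whence $c[E']\le c[F_n]$ for all large $n$ and thus $c[E']\le\sigma$. Taking the supremum over such finite $E'$ and using $c\ge 0$ gives $c[E(S,V\setminus S)]\le\sigma$, so $E(S,V\setminus S)$ attains $\sigma$.

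For $\tau_m\ge\sigma$ in part (ii), I plan to use finite approximations. Fix a chain $\{s,t\}\subseteq V_1\subseteq V_2\subseteq\ldots$ of finite vertex sets with $\bigcup_n V_n=V$, and let $\Delta_n$ be the subnetwork of $\Delta$ induced on $V_n$. The classical finite Max-Flow Min-Cut theorem produces a mundane max flow $f_n$ in $\Delta_n$ of value $\sigma_n$, the minimum $s$--$t$~cut capacity of $\Delta_n$; each $f_n$ extends by zero to a mundane flow in $\Delta$, so $\tau_m\ge\sup_n\sigma_n$. A direct check shows $\sigma_n\le\sigma_{n+1}\le\sigma$ (for the first inequality, restrict a cut $\tilde T\subseteq V_{n+1}$ to $\tilde T\cap V_n$; for the second, intersect an $s$--$t$~cut of $\Delta$ with $V_n$), so set $L:=\sup_n\sigma_n\le\sigma$. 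Assuming $L<\sigma$ for contradiction, pick cuts $T_n\subseteq V_n$ of $\Delta_n$ with capacity $\sigma_n\le L$, extend their indicators by $0$ outside $V_n$, and apply the same diagonal argument to obtain a limit set $T\subseteq V$ with $s\in T$ and $t\notin T$. The same finite-subset lower-semicontinuity step yields $c[E(T,V\setminus T)]\le L<\sigma$, contradicting the definition of $\sigma$; hence $\sigma_n\uparrow\sigma$, giving $\tau_m\ge\sigma$. The case $\sigma=\infty$ is handled by the same argument with ``$L<\sigma$'' replaced by ``$L<\infty$''. The main technical step throughout is the lower-semicontinuity estimate on cut capacities — the one place where one has to use both the non-negativity of $c$ and the fact that $c[F]$ equals the supremum of $c[F']$ over finite $F'\subseteq F$ — but I do not anticipate any serious obstacle beyond setting this up carefully.
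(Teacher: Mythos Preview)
Your argument is correct, and it follows a genuinely different route from the paper's.

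The paper proves (i) and (ii) simultaneously. It truncates the capacities to $c_i(e)=\lfloor 10^i c(e)\rfloor/10^i$, so that after scaling all capacities are integers; it then invokes the \emph{infinite} Menger theorem (\Tr{inf-menger}) to obtain, for each $i$, an orthogonal pair of a mundane flow $f_i$ and a cut $F_i$ in $(D,c_i,s,t)$, with $c_i[F_i]=|f_i|\le\tau_m$. A diagonal argument over an enumeration of the \emph{edges} then extracts from the $F_i$ a limiting separating edge set $F'$ with $c[F']\le\tau_m$, giving both the attainment of $\sigma$ and the inequality $\sigma\le\tau_m$ in one stroke.

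Your approach, by contrast, separates the two parts and needs only the \emph{finite} Max-Flow Min-Cut theorem. For (i) you run compactness directly in $\{0,1\}^V$ on the side-sets $S_n$ of near-optimal cuts; for (ii) you exhaust $V$ by finite sets $V_n$, take finite max flows in the induced subnetworks, and use a second compactness argument on the side-sets $T_n$ of the finite minimum cuts to show $\sup_n\sigma_n=\sigma$. The lower-semicontinuity step you isolate --- that for any finite $E'\subseteq E(T,V\setminus T)$ the finitely many membership conditions are eventually met along the subsequence, hence $c[E']\le L$ --- is exactly what makes both limits work, and your care in extending $\chi_{T_n}$ by zero outside $V_n$ is needed to ensure $v\notin T$ eventually implies $v\in V_n\setminus T_n$ rather than $v\notin V_n$.

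What each approach buys: the paper's route ties the theorem back to \Tr{inf-menger}, reinforcing the paper's theme that the countable MFMC circle closes through infinite Menger; your route is self-contained and strictly more elementary, avoiding the deep Aharoni--Berger result altogether. One minor point worth making explicit in your write-up is that the finite max flow $f_n$ can be taken mundane (decompose into paths and cycles and drop the cycles), since not every max flow in a finite network is automatically a sum of $s$--$t$ path flows.
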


\begin{proof}
For every positive integer
$i$, let $c_i$ be the function obtained from $c$ by cutting off
everything behind the $i$th decimal; formally, $c_i(e)=\lfloor
10^ic(e)\rfloor / 10^i$. In the network $\Delta_i=(D,c_i,s,t)$, all capacities
are multiples of $10^{-i}$, hence we can use \Tr{inf-menger} to find
an orthogonal pair of a mundane flow $f_i$ and a cut $F_i$ in $\Delta_i$.
Since $c_i\le c$, $f_i$ is also a flow in $\Delta$. This yields $c_i[F_i]=
|f_i|\le\tau_m$. We will use the cuts $F_i$ to construct a cut $F$ with
capacity $\tau_m$.

First, enumerate all edges in $E(D)$ as $e_1,e_2,\ldots$. Then, inductively for every positive integer $i$, if there is an integer $m$ such that $m>j_l$
for all $l<i$ and the set $\{e_{j_1},\ldots,e_{j_{i-1}},e_m\}$ is contained in
infinitely many of the cuts $F_1,F_2,\ldots$, then let $j_i$ be the smallest such integer $m$. If no such $m$ exists, then stop.

If $j_i$ exists for all
$i$, we end up with a set $F'=\{e_{j_1},e_{j_2},\ldots\}$ of edges. Now choose a subsequence of $F_1,F_2,\ldots$ as follows: For every positive
integer $i$, let $k_i$ be the smallest integer such that $k_i>k_l$ for all
$l<i$ and the set $\{e_{j_1},\ldots,e_{j_i}\}$ is contained in $F_{k_i}$.

If for some $i$ there is no $j_i$ as desired, we end up with a finite set
$F'=\{e_{j_1},\ldots,e_{j_{i-1}}\}$ and we choose $F_{k_1},F_{k_2},\ldots$
to be the subsequence of $F_1,F_2,\ldots$ consisting of all cuts that contain
$F'$.

In both cases, every edge $e_l$ that is contained in infinitely many of
the cuts $F_{k_1},F_{k_2},\ldots$ is contained in $F'$, since it must have been
chosen as $e_{j_i}$ at some step $i$. We claim that $c[F']\le\tau_m$. Indeed, for every $\varepsilon>0$, there is a
finite subset $F''$ of $F'$ with $c[F'']\ge c[F']-\frac12\varepsilon$. For
sufficiently large $i$, we have $c_i[F'']\ge c[F'']-\frac12\varepsilon$ and
thus $c[F']\le c_i[F'']+\varepsilon\le \tau_m+\varepsilon$. With $\varepsilon\to
0$, this yields $c[F']\le\tau_m$. We further claim that $F'$ separates $s$ from
$t$. Indeed, let $P$ be an $s$--$t$~path. Since $P$ is finite and $F_{k_1},F_{k_2},\ldots$ infinite, $P$ contains an edge that is contained in
infinitely many of the cuts $F_{k_1},F_{k_2},\ldots$, and is thus contained
in $F'$, so $F'$ meets every  $s$--$t$~path. Therefore, $F'$ contains a cut $F$ which, then, satisfies $c[F]\le\tau_m$.

This shows that $\sigma\le c[F]\le \tau_m$. Combining with the trivial inequality $\tau_m \le\sigma$ we obtain the required result.
\end{proof}

The remaining question is whether there is always a mundane flow of value $\tau_m$. The following example shows that this is not the case, providing a negative answer to Problem~\ref{orthmundane}.

\begin{example}\label{ex:counterex}
We construct a locally finite network in which there is no mundane flow of
maximal value. We start with a disjoint union of (directed) paths $Q_i=x_0^i
x_1^ix_2^ix_3^i$,~$i=1,2,\ldots$. For every positive integer $k$, let each
edge $e$ on any path $Q_i$ with $2^{k-1}\le i\le 2^k-1$ have capacity $c(e)=
1/2^k$. Further, for each such $k$ and $i$, we attach the paths $Q_{2i}$ and
$Q_{2i+1}$ to $Q_i$ by adding the edges $(x_0^i,x_0^{2i})$, $(x_3^{2i},x_2^i)$
(to attach $Q_{2i}$), $(x_1^i,x_0^{2i+1})$, and $(x_3^{2i+1},x_3^i)$ (to attach
$Q_{2i+1}$). Let each such edge $e$ have capacity $c(e)=1/2^k$. We denote the
resulting digraph by $D$. The definition of the network $\Delta=(D,c,s,t)$ is
completed by choosing $s=x_0^1$ and $t=x_3^1$ (see Figure~\ref{fig:counterex}).

\showFig{MFMCCounterex}{.5}{fig:counterex}{A locally finite network with no mundane flow of maximal value}

Clearly, $D$ is locally finite (in fact it has maximum degree $3$). For every
positive integer $k$, there exists a mundane flow of value $1-1/2^k$:
It is easy to see that for each positive integer $i$, there is exactly one
$s$--$t$~path that contains $Q_i$; denote it by $P_i$. Then $f_k:=
\sum_{i=1}^{2^{k}-1}\frac1{2^k}P_i$ is a mundane flow of value $1-1/2^k$.
This shows that $\tau_m\ge 1$.

We claim that there is no mundane flow in $\Delta$ that has value $1$.
Indeed, suppose for contradiction that $f$ is a mundane flow with $|f|=1$. Let $e:=(x_1^1,x_2^1)$ and $d:=(x_1^1,x_0^3)$. Applying Kirchhoff's first law to $x_1^1$ we obtain $f(d) \leq 1/2 - f(e)$. However, since $F=\{d, (x_2^1,x_3^1)\}$ is an $s$--$t$~cut with $c[F]=1$, $f$ must saturate $F$ and thus $f(d)=1/2$ whence $f(e)=0$ holds. Similarly, we can prove that $f(g)=0$ holds for every edge $g$ of the form $(x_1^i,x_2^i)$. Since these edges form an $s$--$t$~cut we obtain a contradiction to the fact that $f$ is mundane.

\end{example}

\section{Flowing through an end}
\label{sec:ends}

In this section we consider constraints on  flows that are weaker than being  mundane,
in order to allow for flows to flow, in a sense,  through ends of the underlying undirected graph.
As an example look at the flows in Figure~\ref{fig:nonwcr} and Figure~\ref{fig:wcr2}.
The definition of a mundane flow does not distinguish between the two and rejects both.
However, there is an important difference:
The flow in Figure~\ref{fig:nonwcr} disappears in the left end of the graph and comes back from the right one,
while the flow in Figure~\ref{fig:wcr2} just flows through the left end.
In this section we study flows of the second kind. In order to distinguish them formally
from other flows we need an analog of Kirchhoff's first law for ends. In the case of Figure~\ref{fig:wcr2}
 it is possible to say how much flow arrives at the left end and how much flow leaves it,
 but in general this is not possible: look for example at the network in
 Figure~\ref{fig:counterex}. The flows $f_k$ used there have a limit flow $g$.
 Now for every ray $R$ in this network the values of $g$ along $R$ converge to $0$,
 however there is some flow running to infinity and coming back.
 Similarly to the examples in Figure~\ref{fig:nonwcr} and Figure~\ref{fig:wcr2},
 it is possible to construct flows like $g$ where the
 flow does flow out of the same ends it
 flows in (like in Figure~\ref{fig:wcr2}
 and Figure~\ref{fig:counterex}) or it does
 not (like in Figure~\ref{fig:nonwcr}).
 For flows like $g$ it is not clear how to make precise the assertion than the ends satisfy
 Kirchhoff's first law. The following definition accomplishes this task in an elegant way:

\showFig{nonwcr}{.92}{fig:nonwcr}{A network and a flow. Thick edges carry a flow of value $1$;
thin edges carry no flow.
This flow flows into the left end of the graph and returns through the right one.}

\showFig{wcr2}{.92}{fig:wcr2}{A flow flowing through the left end of the graph.}

\begin{definition}We will call a flow $f$ in a network $\Delta=(D,c,s,t)$ \emph{\wcr}
if for every cut $E(S,T)$ in $D$ (where $T=V(D) \setminus S$) with $s\in S$ that
consists of finitely many edges we have

\begin{equation} \label{cond:wcr}
f[E(S,T)] = \begin{cases}
f[E(T,S)]& \text{ if $t\in S$,}\\
f[E(T,S)]+|f|& \text{ if $t\in T$}
.
\end{cases}
\end{equation}
Let $\taw:=sup \{|f| : f \text{ is a \wcr\ flow} \}$, and let \siw\ be the infimum
of the capacities of all  $s$--$t$~cuts consisting of finitely many edges.
\end{definition}

To see why this definition can be thought of as an analog of Kirchhoff's
first law for ends note that in a locally finite network a cut consisting of finitely many edges cannot
separate two rays in the same end. It is easy to check that $g$ as well as the flow in Figure~\ref{fig:wcr2}
is \wcr\ while the flow in Figure~\ref{fig:nonwcr} is not.

\comment{\begin{proposition} \label{prop:wcr}
In every locally finite network $\Delta=(D,c,s,t)$ there is a \wcr\ flow $f$ such that $|f| = \taw$.
\end{proposition}

\begin{proof}
For every edge $e$ in $D$ let $I_e$ be the real interval $[0,c(e)]$, and define the topological
space $X:= \Pi_{e\in E(D)} I_e$. By Tychonoff's theorem $X$ is compact.

Let $f_1,f_2,\ldots$ be a sequence of \wcr\ flows whose values converge to \taw.
Every $f_i$ corresponds to a point $x_i$ in $X$: the point that has value $f_i(e)$ at the coordinate $I_e$
of $X$ for every $e \in E(D)$. Since $X$ is compact, the sequence $x_1,x_2,\ldots$ has an accumulation point $x$,
which determines a function $f: E(D) \to \R$. It is straightforward to check that $f$ is a \wcr\ flow; indeed,
if condition~\eqref{cond:wcr} is violated by $f$ for some finite cut $D$, in particular if Kirchhoff's law is
violated at some vertex, then there is a basic open neighbourhood $O \ni f$ in $X$, chosen by taking a small
enough interval of $I_e$ around $f(e)$ for every $e \in D$, such that every function in $O$ also violates
condition~\eqref{cond:wcr} at $D$. But this cannot be the case since any such $O$ contains some of
the \wcr\ flows $f_i$. Similarly, it is straightforward to check that $|f| = \taw$.
\end{proof}
}

\begin{theorem} \label{prop:wcr}
In every locally finite network $\Delta=(D,c,s,t)$, $\siw = \taw$ holds.
Moreover, there is a \wcr\ flow $f$ such that $|f| = \taw$.
\end{theorem}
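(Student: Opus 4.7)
The strategy is to prove $\taw \le \siw$ directly, and to obtain the reverse inequality together with the attainment of $\taw$ using finite approximations combined with Tychonoff compactness. The easy direction follows from the defining equation of a \wcr\ flow: for any such flow $f$ and any finite $s$-$t$~cut $F = E(S,T)$, we have $|f| = f[E(S,T)] - f[E(T,S)] \le c[F]$, so $\taw \le \siw$. For attainment, note that $E(D)$ is countable by local finiteness, so $X := \prod_{e \in E(D)} [0, c(e)]$ is a compact metrizable product. Each defining constraint of a \wcr\ flow---capacity on a single edge, Kirchhoff at a fixed vertex, and the \wcr\ identity at a fixed finite cut---involves only finitely many coordinates, so the set of \wcr\ flows is closed in $X$ and hence compact; since $|f|$ is continuous (a finite sum over $s$'s out-edges), $\sup |f| = \taw$ is attained.

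For $\siw \le \taw$, the plan is to build \wcr\ flows of value approaching $\siw$ from finite approximations. Fix an exhaustion $s, t \in V_0 \subseteq V_1 \subseteq \dots$ of $V(D)$, and for each $n$ form a finite network $\Delta_n$ by contracting each connected component $C$ of $V \setminus V_n$ (in the underlying undirected graph) \emph{separately} to a boundary vertex $u_n^C$ (summing capacities of parallel edges, deleting loops). By local finiteness, $V_n$ has only finitely many outgoing edges, hence only finitely many components $C$ adjoin $V_n$ and $\Delta_n$ is finite; let $g_n$ be a maximum flow in $\Delta_n$ of value $v_n$. Every $s$-$t$~cut of $\Delta_n$ lifts, by assigning each $C$ to the side of $u_n^C$, to a finite $s$-$t$~cut of $\Delta$ of the same capacity, so $v_n \ge \siw$. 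Conversely, for each finite cut $F$ of $\Delta$ and each $n$ large enough that $V_n$ contains the endpoints of $F$ and that each component of $V \setminus V_n$ lies entirely on one side of $F$, the set $F$ itself is a cut of $\Delta_n$ of the same capacity, so $v_n \le c[F]$, giving $v_n \to \siw$. Extend $g_n$ to $\tilde g_n : E(D) \to \R_+$ by keeping $g_n$'s values on edges inside $V_n$, arbitrarily distributing each $g_n(v, u_n^C)$ and $g_n(u_n^C, v)$ among the actual boundary edges of $\Delta$ between $v$ and $C$ (subject to capacity), and setting $\tilde g_n \equiv 0$ on edges both of whose endpoints lie in $V \setminus V_n$. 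A pointwise-convergent subsequence $\tilde g_{n_k} \to f$ in $X$ yields a \wcr\ flow $f$ of value $\siw$, because each vertex and each finite cut is eventually captured by some $V_{n_k}$, so both Kirchhoff's law at every vertex and the \wcr\ identity at every finite cut pass to the limit.

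The main obstacle is to avoid introducing spurious flow via the contraction. Contracting all of $V \setminus V_n$ to a single vertex would allow flow to enter through one end of $D$ and return through another (reproducing the non-\wcr\ flow of Figure~\ref{fig:nonwcr}) and would generally make $v_n$ strictly larger than $\siw$; separating the components blocks this, because Kirchhoff at each $u_n^C$ forces any flow into $C$ to return through $C$. A subtler point is that in an asymmetric directed graph a connected component of $V \setminus V_n$ might intersect both sides of a given finite cut $F$; this can be handled by choosing the $s$-side of $F$ to be the forward-reachable set of $s$ in $\Delta - F$ and requiring $V_n$ to be large enough to isolate any straddling components from $F$, so that the \wcr\ identity at $F$ transfers cleanly to the limit.
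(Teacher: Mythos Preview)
Your proof follows essentially the same route as the paper's: both exhaust $D$ by finite subnetworks obtained by separately contracting each connected component outside a growing finite set (the paper uses the balls $\{x : d(x,P)\le i\}$ about a fixed $s$--$t$ path, you use an arbitrary exhaustion $(V_n)$), take maximum flows in these finite networks, embed them in the compact product $X=\prod_e [0,c(e)]$, and pass to an accumulation point; the inequality $v_n \ge \siw$ then forces $|f|\ge\siw$, and the easy bound $\taw\le\siw$ closes the loop. Your separate closedness argument for the attainment of $\taw$, and the step showing $v_n\to\siw$, are both redundant once this is in place.

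One caution, which the paper's proof also glosses over: your assertion that the \wcr\ identity at a fixed finite cut $E(S,T)$ ``involves only finitely many coordinates'' is only correct when $E(T,S)$ is finite too, since condition~\eqref{cond:wcr} contains the term $f[E(T,S)]$. Your forward-reachable-set manoeuvre does not repair this, because it replaces $(S,T)$ by a \emph{different} bipartition rather than verifying~\eqref{cond:wcr} at the given one. The reading that makes both your argument and the paper's go through cleanly is to interpret ``cut consisting of finitely many edges'' as a finite \emph{undirected} edge-cut; then $E(T,S)$ is automatically finite, every component of $V\setminus V_n$ eventually lies on one side of the partition, and the passage to the limit is immediate.
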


\begin{proof}
For every edge $e$ in $D$ let $I_e$ be the real
interval $[0,c(e)]$, and define the topological
space $X:= \Pi_{e\in E(D)} I_e$. By Tychonoff's theorem $X$ is compact.

Pick an $s$--$t$~path $P$ in $D$, and for every $i \in \N$,
let $\Delta_i=(D_i,c_i,s,t)$ be the finite network obtained from $\Delta$ by
contracting each component $C$ of $D - \{x \in V(D) \mid d(x,P)\leq i\}$ to a
vertex $v_C$, and letting $c_i(e) = c(e)$ for every edge in this network. 
(Here $d(x,P)$ stand for the length of the minimal path in the underlying undirected 
graph from $x$ to a vertex of $P$.)
By the MFMC theorem (for finite networks) there is a flow $f_i$ in $\Delta_i$
such that $|f_i|=\sigma_i$, where $\sigma_i$ denotes the minimum capacity of an $s$--$t$~cut in $\Delta_i$.
For every $n$, $f_i$ corresponds to a  point $x_i$ in $X$: the point that has value $f_i(e)$ at
the coordinate $I_e$ of $X$ for every $e \in E(D_i)$ and value $0$ at every other coordinate.
Since $X$ is compact, the sequence $x_1,x_2,\ldots$ has an accumulation point $x$, which determines
a function $f: E(D) \to \R$.

We claim that $f$ is a \wcr\ flow in $\Delta$; indeed, if \eqref{cond:wcr} is violated by $f$ for
some finite cut $B$, in particular if Kirchhoff's law is violated at some vertex,
then there is a basic open neighbourhood $O \ni x$ in $X$, chosen by taking a small
enough interval of $I_e$ around $f(e)$ for every $e \in B$,
such that every function in $O$ also violates \eqref{cond:wcr} at $B$.
 But this cannot be the case since any such $O$ contains some $x_i$ where $i$
 is large enough so that $B$ is a cut in $D_i$.

Similarly, it is not hard to check that $|f|$ is an accumulation point of the
sequence $\{|f_i|\}_{i\in \N}$. Since any cut in some $D_i$ is also a cut in $D$,
 we have $\sigma_i \geq \siw$, and since $|f_i| = \sigma_i$, we obtain $|f| \geq \siw$.
  But $|f|\leq \taw \leq \siw$ by \eqref{cond:wcr}, thus $|f| = \taw = \siw$
\end{proof}

Thus the value $\taw$ is always attained by some \wcr\ flow.
However, $\siw$ does not have to be attained by some finite cut, as shown by the following example.

\begin{example}\label{ex:siw}
Starting with the network of Example~\ref{ex:counterex}, we modify
the capacities of its edges as follows. For every edge $e$ that is
the middle edge $(x_1^i,x_2^i)$ of some path $Q_i$ let $c'(e)=0$; for every other edge $f$, if $c(f)=1/2^k$ then let $c'(f)=1/4^k$. Now the resulting network $\Delta'=(D,c',s,t)$ has $\siw=0$ but the only cut of capacity $0$ is the infinite cut consisting of all the middle edges of the $Q_i$.
\end{example}

Although the definition of a \wcr\ flow allows flows through ends and
forbids flows like the one in Figure~\ref{fig:nonwcr}, there are also
instances of \wcr\ flows that may seem unnatural. Look for example at Figure~\ref{fig:nonscr};
it shows a \wcr\ flow of value $1$ from $s$ to $t$, in a network that
 contains no finite directed $s$--$t$~path. The following definition bans such flows.

\showFig{nonscr}{.7}{fig:nonscr}{A non-zero \wcr\ flow in a network with no finite directed $s$--$t$~path.}

\begin{definition}
We will call a flow $f$ in a network $\Delta=(D,c,s,t)$ \emph{\scr} if it is
\wcr\ and moreover for every $s$--$t$~cut $E(S,T)$ in $D$ we have

\begin{equation} \label{cond:scr}
\begin{split}
|f| + f[E(T,S)]  &\leq c[E(S,T)],\text{ and}\\
f[E(S,T)] &\leq c[E(T,S)] + |f|.
\end{split}
\end{equation}
\end{definition}

Intuitively, the first condition demands that if some flow circumvents
 an infinite $s$--$t$~cut $E(S,T)$, then this circumvention does not
  exceed the amount that could flow through $E(S,T)$ given its capacity
  $c[E(S,T)]$, taking into account that the flow through $E(S,T)$ should also
  compensate for any flow $f[E(T,S)]$ in the inverse direction.
  The second condition demands that if some $s$--$t$~cut carries
  more flow than $|f|$, then the excess is not greater than the amount
   than could go back through the inverse cut.

Let $\tas:=sup \{|f| : f \text{ is a \scr\ flow} \}$.


\begin{theorem}
In every locally finite network $\Delta=(D,c,s,t)$ we have $\sis = \tas$.
 Moreover, there is a \scr\ flow $f$ such that $|f| = \tas$ and an $s$--$t$~cut
 $F$ with $c[F]=\sis$ orthogonal to $f$.
\end{theorem}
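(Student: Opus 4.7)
The plan is to combine Theorem~\ref{sigtau}(i) with a two-sided compactness construction, in the spirit of Theorem~\ref{prop:wcr}. Applying the first inequality of \eqref{cond:scr} to any $s$--$t$~cut shows $\tas \le \sis$, so it suffices to produce a \scr\ flow $f$ of value $\sis$ orthogonal to an $s$--$t$~cut $F$ of capacity $\sis$. By Theorem~\ref{sigtau}(i) such a cut exists; fix $F=E(S,T)$ with $c[F]=\sis$.

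To construct $f$, decompose $\Delta$ at $F$ into two half-networks: $\Delta^S$ (obtained by contracting $T$ to a single sink $\hat t$, with every edge of $F$ now pointing into $\hat t$) and $\Delta^T$ (obtained by contracting $S$ to a single source $\hat s$). An explicit calculation shows that every $s$--$\hat t$~cut in $\Delta^S$ equals in capacity the $s$--$t$~cut of $\Delta$ obtained by leaving $T$ on the sink side; hence the image of $F$ is a minimum $s$--$\hat t$~cut in $\Delta^S$ of capacity $\sis$, and likewise in $\Delta^T$. Now apply a finite-approximation scheme analogous to the one in the proof of Theorem~\ref{prop:wcr}: exhaust $\Delta^S$ by a sequence of finite subnetworks $\Delta^S_n$, solve finite MFMC in each (choosing the max flow $f^S_n$ orthogonal to the cut consisting of all edges into $\hat t$, which has capacity $\sis$ and is thus a minimum cut of $\Delta^S_n$), and extract a subsequential limit $f^S$ via Tychonoff on $\prod_{e\in E(\Delta^S)}[0,c(e)]$. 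Then $f^S$ is a flow in $\Delta^S$ of value $\sis$ saturating every edge of $F$; construct $f^T$ analogously in $\Delta^T$.

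Since both $f^S$ and $f^T$ assign value $c(e)$ to each $e\in F$, they agree on $F$ and glue into a function $f$ on $E(D)$ that equals $f^S$ on edges internal to $S$, $f^T$ on edges internal to $T$, $c$ on $F$, and $0$ on the reverse cut $E(T,S)$. A routine check using the Kirchhoff property of $f^S$ and $f^T$ at vertices away from $\hat t$ and $\hat s$ confirms that $f$ is a flow in $\Delta$ orthogonal to $F$ with $|f|=\sis$.

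It remains to verify that $f$ is \scr. For any $s$--$t$~cut $E(S',T')$ in $\Delta$, partition the edges of $E(S',T')\cup E(T',S')$ according to which side of $F$ each endpoint lies in, and reduce \eqref{cond:scr} for $f$ to corresponding inequalities for $f^S$ in $\Delta^S$ and for $f^T$ in $\Delta^T$; these inequalities hold for the finite max flows $f^S_n, f^T_n$ by the finite MFMC theorem and survive the Tychonoff limit by Fatou-type inequalities, exactly as in Theorem~\ref{prop:wcr}. The main obstacle will be this last step: for a finite cut contained in one half-network the passage to the limit is routine, but for an infinite cut $E(S',T')$ that zig-zags across $F$, verifying \eqref{cond:scr} requires exploiting the orthogonality of $f$ to $F$ (so that no flow escapes around $F$ through infinity) to rule out pathological cancellations between the two half-flows.
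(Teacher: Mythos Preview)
Your decomposition at a minimum cut $F$ is a genuinely different strategy from the paper's. The paper does not split $\Delta$ at all: it takes a sequence of \emph{mundane} flows $f_i$ in $\Delta$ with $|f_i|\to\tau_m$ (mundane flows satisfy \eqref{cond:scr} for every cut, trivially), extracts a Tychonoff accumulation point $f$, and checks that \eqref{cond:scr} survives the limit. Orthogonality to the minimum cut $F$ then comes from a short contradiction: if $f(e)<c(e)-\varepsilon$ for some $e\in F$, infinitely many of the $f_i$ are mundane flows in the network with $c(e)$ reduced by $\varepsilon$, where $F$ has capacity $\sigma-\varepsilon$, so \Tr{sigtau} forces their values below $\sigma-\varepsilon$, contradicting $|f_i|\to\sigma$.

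Your plan has two gaps that this route sidesteps. First, when $F$ is infinite (which can happen even with $c[F]=\sigma<\infty$), the contracted sink $\hat t$ acquires infinite degree, so $\Delta^S$ is no longer locally finite and an exhaustion by finite subnetworks each containing ``all edges into $\hat t$'' cannot exist as you describe it; the contraction scheme of \Tr{prop:wcr} would need genuine modification here before you can even produce $f^S$. Second, and more seriously, you yourself flag the verification of \eqref{cond:scr} for the glued flow as ``the main obstacle'' and then do not resolve it. The reduction you sketch is not clean: for a cut $E(S',T')$ that crosses $F$, the $F$-edges running from $T'\cap S$ to $S'\cap T$ contribute their full capacity to $f[E(T',S')]$, and bounding $|f|+f[E(T',S')]$ by $c[E(S',T')]$ does not decompose into separate inequalities for $f^S$ and $f^T$ without a further argument that you have not supplied. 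By approximating with flows that are already \scr\ in all of $\Delta$, the paper makes this entire step disappear.
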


\begin{proof}
Since, clearly, every mundane flow is \scr, we have $\tas \geq \tau_m$, and thus,
by \Tr{sigtau} and condition~\eqref{cond:scr}, $\sis = \tas$. Let $f_1,f_2,\ldots$
be a sequence of mundane flows in $\Delta$ whose values converge to $\tam=\tas$.
As in the proof of \Tr{prop:wcr}, for every edge $e$ in $D$ let $I_e$ be the real
interval $[0,c(e)]$, and define the topological space $X:= \Pi_{e\in E(D)} I_e$.
Every $f_i$ corresponds to a point $x_i$ in $X$: the point that has value $f_i(e)$
at the coordinate $I_e$ of $X$ for every $e \in E(D)$. Since $X$ is compact,
the sequence $x_1,x_2,\ldots$ has an accumulation point $x$,
which determines a function $f: E(D) \to \R$.
Similarly with the proof of \Tr{prop:wcr},
it is straightforward to check that $f$ is a \scr\
flow since every $f_i$ is, and that $|f|=\tas$.

Let $F$ be an $s$--$t$~cut with $c[F]=\sis$, which exists by \Tr{sigtau}.
We claim that $f$ saturates $F$. Suppose for contradiction that there is an
 edge $e\in F$ such that $f(e)<c(e) - \epsilon$ for some $\epsilon > 0$.
  Then, there is an infinite subsequence $(f'_i)$ of $(f_i)$ with $f'_i(e)< c(e) - \epsilon$.
  But this means that the $f'_i$ are mundane flows in the network $\Delta'$
   obtained from $\Delta$ by reducing $c(e)$ by $\epsilon$. Thus, $\lim |f'_i|
\leq \tam - \epsilon$ by \Tr{sigtau} since $F$ is a cut of capacity  $\sigma - \epsilon$ in that network.
This contradicts the choice of $(f_i)$, so $f$ saturates $F$ as claimed. Similarly,
it is easy to show that for every $T$--$S$~edge $e$ we have $f(e)=0$, which proves that $f$ and $F$ form
an orthogonal pair.

Suppose now for contradiction, that $|f|<\sis$. Then, the auxiliary network $\Delta'=(D,c',s,t)$
obtained by letting $c'(e)=c(e)-f(e)$ for every $e\in E(D)$ has no cut of zero capacity,
because this would imply $|f|\geq \sis(\Delta)$, and no non-trivial \scr\ flow,
 because this would imply $|f|<\tas(\Delta)$. This however cannot be the case; if $\Delta'$ has  no non-trivial \scr\ flow, then there is no finite directed $s$--$t$~path $P$ in $\Delta'$ such that $c'(e)>0$ for every $e \in E(P)$. But then, letting $S$ be the set of vertices $v$ of $D$ such that there is a finite directed $s$--$v$~path $P$ in $\Delta'$ with $c'(e)>0$ for every $e \in E(P)$, we obtain the cut $E(S, V(D) \setminus S)$ which, clearly, has zero capacity.
\end{proof}

It is possible to consider networks where the source $s$ or sink $t$ or both are ends
 of the underlying undirected graph of a digraph $D$ instead of vertices. An $s$--$t$~flow of value $m$ is, then,
 a function $f$ on $E(D)$ such that $KIR(f)=V(D)$ and moreover, for every finite cut $E(S,T)$
  such that $s$ {\it lives} in $S$ we have $f(E(S,T))=m$ unless $t$ also lives in $S$,
  in which case we have $f(E(S,T))=0$. Here, we say that an end lives in $S$ if one of its rays,
  and thus, since $E(S,T)$ is finite, a subray of any of its rays, is contained in $S$; we also
   say that the vertices of $S$ live in $S$. The interested reader will be able to confirm that
   the results of this section carry over to such networks and flows.

\end{document}